\pdfoutput=1
\documentclass[reqno,a4paper]{article}
\setlength{\textwidth}{6.1in}
\setlength{\oddsidemargin}{0.1in}
\setlength{\evensidemargin}{0.1in}
\setlength{\textheight}{8.8in}
\setlength{\topmargin}{-0.1in}
\setlength{\parindent}{15pt}
\setlength{\parskip}{5pt}

\usepackage[colorlinks=true,breaklinks=true,linkcolor=lightblue,citecolor=lightblue]{hyperref}
\usepackage{amssymb,amsmath,epsfig,graphics,psfrag,graphicx,color,cite,fancyhdr,subfigure}

\definecolor{lightblue}{rgb}{0.22,0.45,0.70}

\numberwithin{equation}{section}
\newcommand\cero{\boldsymbol{0}}

\newcommand\ff{\boldsymbol{f}}

\newcommand\bu{\boldsymbol{u}}
\newcommand\bv{\boldsymbol{v}}
\newcommand\bx{\boldsymbol{x}}

\newcommand\bz{\boldsymbol{z}}

\newcommand\bbbeta{\boldsymbol{\beta}}

\newcommand\cP{\mathcal{P}}
\newcommand\cT{\mathcal{T}}

\newcommand\cE{{\mathcal{E}}}
\newcommand\cF{{\mathcal{F}}}

\newcommand\N{\mathbb{N}}
\newcommand\R{\mathbb{R}}

\newcommand{\norm}[1]{\left\|#1\right\|}

\newcommand{\set}[1]{\left\{#1\right\}}

\newcommand\disp{\displaystyle}

\newcommand\DO{\partial\O}

\renewcommand\O{\Omega}
\newcommand\G{\Gamma}

\renewcommand\H{\mathrm{H}}
\renewcommand\L{\mathrm{L}}
\newcommand\Q{\mathrm{Q}}
\newcommand\Z{\mathrm{Z}}
\newcommand\X{\mathrm{X}}
\newcommand\LO{\L^2(\O)}

\newcommand\vdiv{\mathop{\mathrm{div}}\nolimits}
\newcommand\vgrad{\mathop{\mathrm{grad}}\nolimits}
\newcommand\hdivO{{\H(\mathrm{div};\O)}}

\newcommand\HsO{\H^s(\O)}

\newcommand\HusO{\H^{1+s}(\O)}


\newcommand\bn{\boldsymbol{n}}

\newcommand\bomega{\boldsymbol{\omega}}
\newcommand\btheta{\boldsymbol{\theta}}

\newcommand\be{\boldsymbol{e}}
\newcommand\bpi{\boldsymbol{\Pi}}
\newcommand\bxi{\boldsymbol{\xi}}
\newcommand\betta{\boldsymbol{\eta}}

\newtheorem{remark}{Remark}[section]
\newtheorem{lemma}{Lemma}[section]
\newtheorem{theorem}{Theorem}[section]
\newtheorem{proposition}{Proposition}
\newtheorem{corollary}{Corollary}

\newcommand\curl{\mathop{\mathbf{curl}}\nolimits}

\newcommand{\jump}[1]{\left[\![#1\right]\!]}
\newcommand{\jumpT}[1]{{\left[\![#1\right]\!]}_T}
\newcommand{\jumpN}[1]{{\left[\![#1\right]\!]}_N}
\newcommand{\avg}[1]{\left\{\!\!\{#1\right\}\!\!\}}

\def\CR{{\mathcal R}}
\def\CP{{\mathcal P}}

\newenvironment{proof}{\noindent{\it Proof.}}{\hfill$\square$}

\allowdisplaybreaks

\pagestyle{fancy}
\rhead{{\small Anaya \emph{et al.}}}
\lhead{ \textit{\small A vorticity-velocity-pressure 
formulation for the Oseen equations}}

\begin{document}

\title{Analysis and approximation of a vorticity-velocity-pressure 
formulation for the Oseen equations}

\author{{\sc Ver\'onica Anaya}\thanks{GIMNAP, Departamento de Matem\'atica,
Universidad del B\'io-B\'io, Concepci\'on, Chile. E-mail:  
{\tt vanaya@ubiobio.cl}.},  \ 
{\sc Afaf Bouharguane}\thanks{Institut de Math\'ematiques de Bordeaux, 
CNRS UMR 5251, Universit\'e de Bordeaux, 33405 Talence, France. E-mail: {\tt afaf.bouharguane@math.u-bordeaux.fr}.}, \ 
\ {\sc David Mora}\thanks{GIMNAP, Departamento de Matem\'atica, Universidad
del B\'io-B\'io, Concepci\'on, Chile and
Centro de Investigaci\'on en Ingenier\'ia Matem\'atica
(CI$^2$MA), Universidad de Concepci\'on, Concepci\'on, Chile.
E-mail: {\tt dmora@ubiobio.cl}.}, \ 
\ {\sc Carlos Reales}\thanks{Departamento de Matem\'aticas
y Estad\'isticas, Universidad de C\'ordoba, Monter\'ia, Colombia. E-mail: 
{\tt creales@correo.unicordoba.edu.co}.}, \\
\ {\sc Ricardo Ruiz-Baier}\thanks{Mathematical Institute,
University of Oxford,  OX2 6GG Oxford, UK. E-mail: {\tt ruizbaier@maths.ox.ac.uk}.},  \ 
{\sc Nour Seloula}\thanks{LMNO, CNRS UMR 6139,
Universit\'e de Caen, 5186 Caen, France. E-mail: {\tt nour-elhouda.seloula@unicaen.fr}.}, 
\ and {\sc Hector Torres}\thanks{Departamento de Matem\'aticas, Universidad de La Serena, 
La Serena, Chile. E-mail: {\tt htorres@userena.cl}.}
}

\date{\today}
\maketitle

\begin{abstract}
\noindent 
We introduce a family of mixed methods and discontinuous Galerkin discretisations designed to numerically solve the Oseen equations written in terms of velocity, vorticity, and Bernoulli pressure. The unique solvability of the continuous problem is addressed by invoking a global inf-sup property in an adequate abstract setting for non-symmetric systems. The proposed finite element schemes, which produce exactly divergence-free discrete velocities, are shown to be well-defined and optimal convergence rates are derived in suitable norms. In addition, we establish optimal rates of convergence for a class of discontinuous Galerkin schemes, which employ stabilisation. A set of numerical examples serves to illustrate salient features of these methods. 
\end{abstract}

\noindent
{\bf Key words}: Oseen equations; vorticity-based formulation; mixed finite elements; exactly divergence-free velocity; discontinuous Galerkin schemes; numerical fluxes; a priori error bounds.

\smallskip\noindent
{\bf Mathematics subject classifications (2000)}:  65N30, 65N12, 76D07, 65N15.

\maketitle
\section{Introduction}
The Oseen equations  
stem from linearisation of the steady (or alternatively from the backward Euler time-discretisation 
of the transient) Navier-Stokes equations. Of particular appeal to us is their formulation in terms of 
fluid velocity, vorticity vector, and pressure. A diversity of discretisation methods is available to solve 
incompressible flow problems using these three fields as principal unknowns. Some recent examples 
include spectral elements \cite{amoura07,azaiez06} as well as stabilised and least-squares 
schemes \cite{amara07,bochev97} for Navier-Stokes; also several mixed and augmented methods for 
Brinkman  \cite{AGMRb15,anaya15a,anaya17b}, and a number of other discretisations 
specifically designed for Stokes 
flows \cite{anaya13,duan03,dubois03,gatica11,salaun15}. 

Both the implementation and the analysis of numerical schemes for 
Navier-Stokes are typically based on 
the Oseen linearisation. A few related contributions  (not only restricted to the velocity-pressure 
formulation) include for instance
\cite{bochev99}, that presents 
a least-squares  method 
for Navier-Stokes equations with vorticity-based first-order reformulation, and whose 
analysis exploits  
the elliptic theory of Agmon-"Douglas-Nirenberg. 
Conforming finite element methods exhibit optimal order of accuracy for 
diverse boundary conditions. We also mention the 
non-conforming exponentially
accurate least-squares spectral  method for Oseen
equations  proposed in \cite{mohapatra16}, where 
a suitable preconditioner is also proposed. 
In \cite{tsai05} the authors introduce a velocity-vorticity-pressure 
least-squares  finite element method for Oseen and Navier-Stokes equations with velocity
boundary conditions. They derive error estimates and reported a degeneracy of 
the convergence for large Reynolds numbers. A div least-squares
minimisation problem based on the stress-velocity-pressure formulation was 
introduced in \cite{cai16}. The study shows 
that the corresponding homogeneous least-squares functional is elliptic and 
continuous in suitable norms. Several first-order 
Oseen-type systems are analysed in \cite{chang07}, also including vorticity and total pressure in the formulation.

Discontinuous Galerkin (DG) methods have also been used to solve the Oseen problem,
as for example, in \cite{Cockburn03,Cockburn05}  for the case of Dirichlet boundary conditions. 
Compared with conforming finite elements, discretisations based on DG methods have a number of 
attractive, and well-documented features. These include  
 high order accuracy, being amenable for $hp$-adaptivity, relatively simple implementation on 
 highly unstructured meshes, and superior robustness when handling rough coefficients. 
We also mention the a priori error analysis of hybridisable DG schemes introduced  
in \cite{CockburnJSC13} for the Oseen equations. The family of DG methods we propose here 
has resemblance with those schemes, but exploits a three-field formulation described below.

This paper is concerned with mixed non-symmetric variational problems 
which will be  analysed using a global inf-sup argument. 
To do this, we conveniently restrict the set of equations to the space of divergence-free velocities, and apply
results from \cite{G2014} in order to prove that the equivalent resulting non-symmetric saddle-point
problem is well-posed. For the numerical approximation, we first consider
Raviart-Thomas elements of order $k\ge0$ for the velocity field, N\'ed\'elec elements
or order $k$ for the vorticity, and piecewise polynomials of degree $k$ without 
continuity restrictions, for the Bernoulli pressure.
We prove unique solvability of the discrete problem by adapting the same tools utilised
in the analysis of the continuous problem. In addition, the proposed family of Galerkin 
finite element methods 
turns out to be optimally convergent, under the common 
assumptions of enough regularity of the exact solutions to the continuous 
 problem. The method produces exactly divergence-free approximations 
of the velocity by construction;  thus preserving, at the discrete level, an essential constraint of the governing
equations. Next, inspired by the methods presented in \cite{Cockburn_SIAM2002,Cockburn03}, we present another scheme involving the discontinuous Galerkin discretisation of the $\curl$-$\curl$ and $\mathrm{grad}$-$\mathrm{div}$ operators. We prove the well-posedness of the DG scheme and derive error estimates under some solution regularity assumptions.

We have structured the contents of the paper in the following manner. Notation-related preliminaries 
are stated in the remainder of this Section. We then present the model problem 
as well as the three-field weak formulation and its solvability analysis in Section~\ref{sec:model}. 
The finite element discretisation is constructed in Section~\ref{sec:FE}, where we also 
derive the stability and convergence bounds. In Section~\ref{sec:DG}, we present the mixed DG formulation for the model problem. The well-posedness of the method and the error analysis are established in the same section. We close in Section~\ref{sec:numer} with a set of numerical tests 
that illustrate the properties of the proposed numerical schemes in a variety of scenarios.

Let $\O$ be a  bounded domain of $\R^3$ with Lipschitz boundary $\DO$.
Moreover, we assume that $\DO$ admits a
disjoint partition $\DO=\Gamma\cup \Sigma$. For any $s\geq 0$, the symbol 
$\norm{\cdot}_{s,\O}$ denotes 
the norm of the Hilbertian Sobolev spaces $\HsO$ or
$\HsO^3$, with the usual convention $\H^0(\O):=\LO$. For $s\geq 0$, we recall 
the definition of the space 
$$\H^s(\curl;\O):=\set{\btheta\in\HsO^3:\ \curl\btheta\in\HsO^3},$$ endowed with the norm
 $\norm{\btheta}^2_{\H^s(\curl;\O)}=\norm{\btheta}_{s,\O}^2+\norm{\curl\btheta}^2_{s,\O}$, and will denote 
$\H(\curl;\O)=\H^0(\curl;\O)$. Finally, $c$ and
$C$, with subscripts, tildes, or hats, will represent a generic constant
independent of the mesh parameter $h$.

\section{Statement and solvability of the continuous problem}\label{sec:model}
\paragraph{Oseen problem in terms of velocity-vorticity-pressure.}
A standard backward Euler time-discretisation of the classical Navier-Stokes 
equations, or a linearisation of the 
steady version of the problem combined with standard curl-div identities, 
 leads to the following set of equations, known as the 
Oseen equations (see \cite{oseen,gr-1986}): 
\begin{align}\label{NS-cont-vort}\begin{split}
\sigma\bu-\nu\Delta\bu+\curl\bu\times\bbbeta +\nabla p & = 
  \ff \qquad \mbox{ in } \O, \\
  \vdiv\bu & =  0 \qquad\, \mbox{ in } \O,  
 \end{split}\end{align}
where $\nu>0$ is the kinematic fluid viscosity, 
 $\sigma>0$ is inversely proportional to the time-step, $\bbbeta$ is 
 an adequate approximation of velocity to be made precise below, and 
the vector of external forces $\ff$ also absorbs the contributions related to previous time steps, or to fixed states in 
the linearisation procedure of the steady Navier-Stokes equations. As usual in this context, in the momentum 
equation we have conveniently introduced the Bernoulli (also known as 
dynamic) pressure $p := P + \frac{1}{2}\vert\bu\vert^2$, where $P$ is the actual fluid pressure. 

The structure of \eqref{NS-cont-vort} suggests to introduce the rescaled 
vorticity vector $\bomega:=\sqrt{\nu}\curl\bu$ as a new
unknown. Furthermore, 
in this study we focus on the case of zero normal velocities and zero 
tangential vorticity trace imposed on a part of the boundary $\Gamma\subset \partial\Omega$, 
whereas a non-homogeneous tangential velocity $\bu_{\Sigma}$ and a fixed Bernoulli pressure $p_\Sigma$ are set 
on the remainder of the boundary $\Sigma = \partial\Omega\setminus\Gamma$. 
Therefore, system \eqref{NS-cont-vort} can be recast in the form 
\begin{align}
\sigma\bu+\sqrt{\nu}\curl \bomega+\nu^{-1/2}\bomega\times\bbbeta +\nabla p  = 
  \ff, \quad 
  \bomega-\sqrt{\nu}\curl\bu  =  \cero, \quad \text{and} \quad 
\vdiv\bu & =  0 & \mbox{ in } \O, \nonumber  \\ 
  \bu\cdot\bn = 0 \quad \text{ and }  \quad \bomega\times\bn & = \cero&    
\mbox{ on } \G, \label{oseen-cont}\\ 
  p =p_\Sigma \quad \text{ and }  \quad   \bu\times\bn&=\bu_{\Sigma}&\mbox{ on } \Sigma,\nonumber
 \end{align}
where  $\bn$ stands for the outward unit normal
on $\partial\Omega$. Should the boundary $\Sigma$ have zero measure, 
the additional condition $ (p,1)_{\Omega,0} = 0$ is required to 
 enforce uniqueness of the 
Bernoulli pressure.

\paragraph{Defining a weak formulation.} 
Let us introduce the following functional spaces 
$$\H:=\{\bv\in\H(\vdiv;\O): \bv\cdot\bn=0\,\,\text{on}\,\,\G\}, \quad
\Z:=\{\btheta\in\H(\curl;\O): \gamma_{t}(\btheta)=\cero\,\,\text{on}\,\,\G\}, \quad 
 \text{and } \quad 
\Q:=\LO,$$
where the operator $\gamma_{t}$ is the tangential trace
operator on $\G$, defined by: $\gamma_{t}(\btheta)=\btheta\times\bn$.
Let us endow  $\H$ and $\Q$ with their natural norms. 
For the space $\Z$ however, we consider the following
viscosity-weighted norm: 
$$\Vert\btheta\Vert_{\Z}:=\left(\Vert\btheta\Vert_{0,\O}^2+
\nu\Vert\curl\btheta\Vert_{0,\O}^2\right)^{1/2}.$$

From now on, we will assume that the data are regular enough: $\bbbeta\in \L^{\infty}(\O)^3$ and $\ff\in\L^2(\O)^3$. 
We proceed to test  \eqref{oseen-cont} against adequate
functions and to impose the boundary conditions in such a manner that 
we end up with the following formulation: 
 Find $(\bu,\bomega,p)\in\H\times\Z\times\Q$ such that 
\begin{alignat}{4}
&a(\bu,\bv)      &+&\;b_{1}(\bv,\bomega)&+b_2(\bv,p)
+c(\bomega,\bv)&=\;F(\bv)&\qquad\forall\bv\in\H,\nonumber\\
&b_{1}(\bu,\btheta)&-&\;d(\bomega,\btheta)&           
&=\;G(\btheta)&\qquad\forall\btheta\in\Z,\label{probform1}\\
&b_2(\bu,q)      & &                &           & =\;0 & \qquad\forall q\in\Q,\nonumber
\end{alignat}
where the bilinear forms
$a:\H\times\H\to\R$,
$b_1:\H\times\Z\to\R$,
$d:\Z\times\Z\to\R$,
$b_2:\H\times\Q\to\R$,
$c:\Z\times\H\to\R$, and the
linear functionals $F:\H\to\R$, and $G:\Z\to\R$ are
specified as follows 
\begin{align*} 
a(\bu,\bv)&:=\sigma\int_{\O}\bu\cdot\bv\,d\bx,\quad 
b_1(\bv,\btheta):=\sqrt{\nu}\int_{\O}\curl\btheta\cdot\bv\,d\bx,\quad 
b_2(\bv,q):=-\int_{\O}q\vdiv\bv\,d\bx,\\
d(\bomega,\btheta)&:=\int_{\O}\bomega\cdot\btheta\,d\bx,\quad
c(\btheta,\bv):=\frac{1}{\sqrt{\nu}}\int_{\O}(\btheta\times\bbbeta)\cdot\bv\,d\bx,\\ 
F(\bv)&:=\int_{\O}\ff\cdot\bv\,d\bx-\langle\bv\cdot\bn, p_\Sigma\rangle_{\Sigma},\quad 
G(\btheta):=-\sqrt{\nu}\langle \bu_{\Sigma},\btheta\rangle_{\Sigma},
\end{align*}
for all $\bu,\bv\in\H$, $\bomega,\btheta\in\Z$, and $q\in\Q$.

\paragraph{Solvability analysis.}
In order to analyse the variational formulation \eqref{probform1},
let us introduce the Kernel of the bilinear form $b_2(\cdot,\cdot)$ and 
its classical characterisation 
$$ \X := \{ \bv \in \H\,:\, b_2(\bv,q)=0,\quad \forall \, q\in Q\}\,
=\,\{ \bv \in \H\,:\, \vdiv \bv \,=\,0\;\;\text{in}\;\;\O\},$$
and let us recall that $b_2$ satisfies the inf-sup condition:
\begin{equation}\label{eq:inf-sup-b2}
\sup_{\stackrel{\scriptstyle\bv\in\H}{\bv\ne0}}\frac{\vert
  b_{2}(\bv,q)\vert}{\Vert\bv\Vert_{\H}}\ge \beta_2\Vert
  q\Vert_{0,\O}\quad\forall q\in\Q,
\end{equation}
with an inf-sup constant $\beta_2>0$ only depending on $\Omega$ (see e.g. \cite{G2014}).

We will now address the well-posedness of \eqref{probform1}.
To that end, it is enough  to study its \emph{reduced} counterpart, defined 
on $\X\times \Z$: Find $(\bu,\bomega)\in \X\times \Z$ such that 
\begin{alignat}{4}
&a(\bu,\bv)      &+&\;b_{1}(\bv,\bomega)&+c(\bomega,\bv)&=\;
F(\bv)&\qquad\forall\bv\in\X,\nonumber\\
&b_{1}(\bu,\btheta)&-&\;d(\bomega,
\btheta)&&=\;G(\btheta)&\qquad\forall\btheta\in\Z\label{eq:reduced}.
\end{alignat}
The equivalence between \eqref{probform1} and 
\eqref{eq:reduced} is established in the following result, whose proof 
follows \cite{gr-1986} and it is basically a direct consequence of the inf-sup condition \eqref{eq:inf-sup-b2}.
\begin{lemma}\label{lemma:equivalence}
If $(\bu,\bomega,p)\in \H\times\Z\times\Q$ is a solution of \eqref{probform1},
then $\bu\in \X$ and $(\bu,\bomega)\in \X\times\Z$ also solves 
\eqref{eq:reduced}. Conversely,
if $(\bu,\bomega)\in \X\times\Z$ is a solution of \eqref{eq:reduced},
then there exists a unique $p\in\Q$ such that
$(\bu,\bomega,p)\in \H\times\Z\times\Q$ solves \eqref{probform1}.
\end{lemma}

The abstract setting that will permit the analysis of \eqref{probform1} is 
stated in the following general result \cite[Theorem 1.2]{G2014}.
\begin{theorem}\label{lemma:abc}
 Let 
$\mathcal A: \mathcal X\times \mathcal X \to \R$ be a bounded bilinear form and 
$\mathcal G: \mathcal X\to\R$ a bounded functional, both defined on the Hilbert space 
$(\mathcal X,\langle\cdot,\cdot\rangle_{\mathcal X})$.
If there exists $\alpha > 0$ such that
\begin{equation}\label{cont-inf-sup-A}
\disp\,\sup_{\scriptstyle y\in {\mathcal X}\setminus\{0\}}\frac{\mathcal A(x,y)}{\|y\|_{\mathcal X}}\,\ge\,\alpha\,\|x\|_{\mathcal X}
\quad\forall\,x\in \mathcal X,
\end{equation}
and
\begin{equation}\label{cont-inf-sup-B}
\disp\,\sup_{\stackrel{\scriptstyle x\in {\mathcal X}}{y\ne0}}\mathcal A(x,y)\,>\,0\quad\forall\,y\in \mathcal X,
\end{equation}
then there exists a unique solution $x \in \mathcal X$ to the problem 
\begin{equation*}
\mathcal A(x,y)= \mathcal G(y)\quad \forall\, y\in \mathcal X.
\end{equation*}
Furthermore, there exists $C>0$ (independent of $x$) such that 
$$\|x\|_{\mathcal X}\le \frac{1}{\alpha}\|\mathcal G\|_{\mathcal X'}.$$
\end{theorem}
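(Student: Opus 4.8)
The plan is to recast the variational problem as an operator equation in $\mathcal X$ and to prove that the associated operator is an isomorphism. First I would invoke the Riesz representation theorem to introduce the bounded linear operator $\mathcal T:\mathcal X\to\mathcal X$ characterised by $\langle \mathcal Tx,y\rangle_{\mathcal X}=\mathcal A(x,y)$ for all $x,y\in\mathcal X$; boundedness of $\mathcal T$ is inherited directly from that of $\mathcal A$. In the same way, the bounded functional $\mathcal G$ is represented as $\mathcal G(y)=\langle g,y\rangle_{\mathcal X}$ for a unique $g\in\mathcal X$ with $\|g\|_{\mathcal X}=\|\mathcal G\|_{\mathcal X'}$. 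In these terms, solving $\mathcal A(x,y)=\mathcal G(y)$ for all $y\in\mathcal X$ becomes equivalent to solving the single operator equation $\mathcal Tx=g$.

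Next I would reinterpret the inf-sup hypothesis \eqref{cont-inf-sup-A} as a lower bound on $\mathcal T$. Since $\|\mathcal Tx\|_{\mathcal X}=\sup_{y\ne0}\langle\mathcal Tx,y\rangle_{\mathcal X}/\|y\|_{\mathcal X}=\sup_{y\ne0}\mathcal A(x,y)/\|y\|_{\mathcal X}$, condition \eqref{cont-inf-sup-A} says precisely that $\|\mathcal Tx\|_{\mathcal X}\ge\alpha\|x\|_{\mathcal X}$ for every $x\in\mathcal X$. This estimate immediately gives injectivity of $\mathcal T$, and, via a standard Cauchy-sequence argument, that the range of $\mathcal T$ is closed: whenever $\mathcal Tx_n\to z$, the sequence $(x_n)$ is Cauchy because $\|x_n-x_m\|_{\mathcal X}\le\alpha^{-1}\|\mathcal Tx_n-\mathcal Tx_m\|_{\mathcal X}$, so $x_n\to x$ and $z=\mathcal Tx$ by continuity.

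It remains to establish surjectivity, and this is where hypothesis \eqref{cont-inf-sup-B} enters; I regard it as the crux of the argument. As the range of $\mathcal T$ is a closed subspace of the Hilbert space $\mathcal X$, it suffices to show that its orthogonal complement is trivial. If $y\in\mathrm{Range}(\mathcal T)^\perp$, then $\mathcal A(x,y)=\langle\mathcal Tx,y\rangle_{\mathcal X}=0$ for all $x\in\mathcal X$, which by \eqref{cont-inf-sup-B} is impossible unless $y=0$. Hence $\mathrm{Range}(\mathcal T)^\perp=\{0\}$, and closedness yields $\mathrm{Range}(\mathcal T)=\mathcal X$. Thus $\mathcal T$ is a bijection, the equation $\mathcal Tx=g$ possesses the unique solution $x=\mathcal T^{-1}g$, and this $x$ is exactly the sought solution of the variational problem.

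Finally, the stability estimate follows by applying the lower bound to the solution itself: $\alpha\|x\|_{\mathcal X}\le\|\mathcal Tx\|_{\mathcal X}=\|g\|_{\mathcal X}=\|\mathcal G\|_{\mathcal X'}$, which is the claimed inequality. The main obstacle is the surjectivity step; everything else is a routine translation between the bilinear-form and operator viewpoints. It is worth stressing that in this non-symmetric setting the one-sided inf-sup condition \eqref{cont-inf-sup-A} alone does not force $\mathcal T$ to be onto, and condition \eqref{cont-inf-sup-B} supplies exactly the density of the range needed to conclude.
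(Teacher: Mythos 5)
Your proof is correct. The paper itself gives no proof of this theorem: it is quoted directly from \cite[Theorem 1.2]{G2014}, and your argument --- passing via Riesz representation to the operator equation $\mathcal Tx=g$, reading \eqref{cont-inf-sup-A} as the lower bound $\|\mathcal Tx\|_{\mathcal X}\ge\alpha\|x\|_{\mathcal X}$ (hence injectivity and closed range), using \eqref{cont-inf-sup-B} to show $\mathrm{Range}(\mathcal T)^{\perp}=\{0\}$, and deducing the stability estimate from the same lower bound --- is precisely the standard operator-theoretic proof found in that reference, so there is nothing to correct or compare beyond noting this agreement.
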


\begin{lemma}\label{le1:continuous-problem}
Let us assume that 
\begin{equation}\label{beta-assumption}
\frac{2\Vert\bbbeta\Vert_{\infty,\O}^2}{\nu\sigma}< 1,
\end{equation}
and let us define the bilinear form $\mathcal{A}(\cdot,\cdot)$ specified as 
$$\mathcal A((\bu,\bomega),(\bv,\btheta)):=a(\bu,\bv)+\;b_{1}(\bv,\bomega)+b_{1}(\bu,\btheta)-\;d(\bomega,\btheta)+c(\bomega,\bv).$$
Then, there exist $\alpha_1,\alpha_2>0$ such that
\begin{equation}\label{acot}
\vert\mathcal A((\bu,\bomega),(\bv,\btheta))\vert\le \alpha_1\Vert(\bu,\bomega)\Vert_{\mathcal X}\Vert(\bv,\btheta)\Vert_{\mathcal X},
\end{equation}
and
\begin{equation}\label{cont-inf-sup-Aa}
\disp\,\sup_{\stackrel{\scriptstyle (\bv,\btheta)\in {\mathcal X}}{(\bv,\btheta)\ne0}}
\frac{\mathcal A((\bu,\bomega),(\bv,\btheta))}{\|(\bv,\btheta)\|_{\mathcal X}}\,\ge\,\alpha_2\,\|(\bu,\bomega)\|_{\mathcal X}
\quad\forall\,(\bu,\bomega)\in \mathcal X,
\end{equation}
where $\mathcal X:=\X\times\Z$, endowed
with the corresponding product norm, is a Hilbert space.
\end{lemma}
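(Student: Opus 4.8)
The plan is to establish the two assertions separately, with the continuity bound (\ref{acot}) being routine and the global inf-sup estimate (\ref{cont-inf-sup-Aa}) requiring the explicit construction of a suitable test pair.

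For the boundedness estimate I would bound each of the five terms defining $\mathcal A$ by Cauchy--Schwarz. Since $\bu,\bv\in\X$ are divergence-free their $\H$-norms reduce to the $\L^2$-norms, so $|a(\bu,\bv)|\le\sigma\Vert\bu\Vert_{0,\O}\Vert\bv\Vert_{0,\O}$, while $|b_1(\bv,\bomega)|\le\Vert\bv\Vert_{0,\O}\Vert\bomega\Vert_{\Z}$ and likewise for $b_1(\bu,\btheta)$, $|d(\bomega,\btheta)|\le\Vert\bomega\Vert_{0,\O}\Vert\btheta\Vert_{0,\O}$, and $|c(\bomega,\bv)|\le\nu^{-1/2}\Vert\bbbeta\Vert_{\infty,\O}\Vert\bomega\Vert_{0,\O}\Vert\bv\Vert_{0,\O}$. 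Collecting these and invoking the definition of the product norm on $\mathcal X$ yields (\ref{acot}) with a constant $\alpha_1$ depending only on $\sigma$, $\nu$, and $\Vert\bbbeta\Vert_{\infty,\O}$.

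For the inf-sup condition, given $(\bu,\bomega)\in\mathcal X$ I would test against the pair $(\bv,\btheta):=(\bu+\delta\sqrt{\nu}\,\curl\bomega,\,-\bomega)$ for a small parameter $\delta>0$ to be fixed. The first task is to verify admissibility, i.e. that $\curl\bomega\in\X$: its divergence vanishes identically, and the tangential condition $\gamma_t(\bomega)=\cero$ on $\G$ forces, through the integration-by-parts identity $\int_\O\curl\bomega\cdot\nabla\varphi\,d\bx=\langle\bomega\times\bn,\nabla\varphi\rangle_{\partial\O}$, the normal trace $\curl\bomega\cdot\bn$ to vanish on $\G$; hence $\curl\bomega\in\X$ and $\bv\in\X$. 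Inserting this pair, the two $b_1$-contributions cancel while the $d$-term changes sign, leaving
\begin{align*}
\mathcal A((\bu,\bomega),(\bv,\btheta))&=\sigma\Vert\bu\Vert_{0,\O}^2+\Vert\bomega\Vert_{0,\O}^2+\delta\nu\Vert\curl\bomega\Vert_{0,\O}^2\\
&\quad+c(\bomega,\bu)+\delta\sqrt{\nu}\,\sigma\int_\O\bu\cdot\curl\bomega\,d\bx+\delta\int_\O(\bomega\times\bbbeta)\cdot\curl\bomega\,d\bx.
\end{align*}

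It remains to absorb the three indefinite terms into the three positive contributions. I would first treat $c(\bomega,\bu)$ by Young's inequality, splitting it between $\Vert\bu\Vert_{0,\O}^2$ and $\Vert\bomega\Vert_{0,\O}^2$; assumption (\ref{beta-assumption}) is precisely what guarantees that the coefficients of these two terms remain strictly positive, with room to spare. The two $\delta$-weighted cross terms are then bounded, again by Young, partly against $\delta\nu\Vert\curl\bomega\Vert_{0,\O}^2$ and partly against the two $\L^2$-terms; choosing $\delta$ small enough keeps the coefficient of $\nu\Vert\curl\bomega\Vert_{0,\O}^2$ positive while only slightly eroding the other two, yielding $\mathcal A((\bu,\bomega),(\bv,\btheta))\ge C\Vert(\bu,\bomega)\Vert_{\mathcal X}^2$. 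Finally the triangle inequality together with $\sqrt{\nu}\Vert\curl\bomega\Vert_{0,\O}\le\Vert\bomega\Vert_{\Z}$ gives $\Vert(\bv,\btheta)\Vert_{\mathcal X}\le C'\Vert(\bu,\bomega)\Vert_{\mathcal X}$, and dividing produces (\ref{cont-inf-sup-Aa}) with $\alpha_2=C/C'$. The main obstacle is the parameter bookkeeping in this last step: one must check that the slack afforded by the sharper constant in (\ref{beta-assumption}), beyond the bare threshold that would already suffice for the unperturbed pair $(\bu,-\bomega)$, is enough to absorb the $\delta$-perturbation while keeping all three coefficients positive, with the vanishing of the normal trace of $\curl\bomega$ on $\G$ being the secondary technical point.
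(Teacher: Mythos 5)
Your proposal is correct and follows essentially the same route as the paper: the authors use exactly the test pair $(\tilde\bv,\tilde\btheta)=(\bu+\hat c\sqrt{\nu}\curl\bomega,\,-\bomega)$ (your $\delta$ is their $\hat c$, fixed as $\hat c=1/(4\sigma)$), the same cancellation of the two $b_1$-terms, and the same Young-inequality absorption in which assumption \eqref{beta-assumption} keeps the coefficient of $\Vert\bomega\Vert_{0,\O}^2$ positive. Your explicit verification that $\curl\bomega\in\X$ (vanishing divergence plus vanishing normal trace on $\G$ inherited from $\gamma_t(\bomega)=\cero$) is a point the paper uses tacitly, so including it only strengthens the argument.
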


\begin{proof}
As a consequence of the boundedness of $a(\cdot,\cdot)$
$b_1(\cdot,\cdot)$, $c(\cdot,\cdot)$, and $d(\cdot,\cdot)$,  the 
bilinear form $\mathcal A(\cdot,\cdot)$ is bounded and so condition \eqref{acot} readily 
follows.

Concerning the satisfaction of the inf-sup condition \eqref{cont-inf-sup-Aa}, for a 
 given $(\bu,\bomega)\in\mathcal X$, we can define 
$$\tilde{\btheta}:=-\bomega\in\Z,\quad \text{and}\quad \tilde{\bv}:=(\bu+\hat{c}\sqrt{\nu}\curl\bomega)\in\X,$$
where $\hat{c}>0$ is a constant to be chosen later. 
We can then immediately assert that 
\begin{align*}
\mathcal A((\bu,\bomega),(\tilde \bv,\tilde\btheta))
& =\sigma\int_{\O}\bu\cdot\tilde{\bv}\, d\bx+\sqrt{\nu}\int_{\O}\curl\bomega\cdot\tilde{\bv}\, d\bx
+\sqrt{\nu}\int_{\O}\curl\tilde{\btheta}\cdot\bu\, d\bx \\
&\quad -\int_{\O}\bomega\cdot\tilde{\btheta}\, d\bx+\frac{1}{\sqrt{\nu}}\int_{\O}(\bomega\times\bbbeta)\cdot\tilde{\bv}\, d\bx\\
&\geq \sigma\Vert\bu\Vert_{0,\O}^2+\hat{c}\sqrt{\nu}\sigma\int_{\O}\bu\cdot\curl\bomega\, d\bx
+\sqrt{\nu}\int_{\O}\bu\cdot\curl\bomega\, d\bx+\hat{c}\nu\Vert\curl\bomega\Vert_{0,\O}^2 \\
&\quad -\sqrt{\nu}\int_{\O}\bu\cdot\curl\bomega\, d\bx+\Vert\bomega\Vert_{0,\O}^2+\frac{1}{\sqrt{\nu}}\int_{\O}(\bomega\times\bbbeta)\cdot\bu\, d\bx
+\hat{c}\int_{\O}(\bomega\times\bbbeta)\cdot\curl\bomega\, d\bx\\
& \geq \sigma\Vert\bu\Vert_{0,\O}^2-\frac{\sigma}{4}\Vert\bu\Vert_{0,\O}^2
-\hat{c}^2\sigma\nu\Vert\curl\bomega\Vert_{0,\O}^2
+\hat{c}\nu\Vert\curl\bomega\Vert_{0,\O}^2+\Vert\bomega\Vert_{0,\O}^2
-\frac{2\sigma}{3}\Vert\bu\Vert_{0,\O}^2\\
&\quad -\frac{3\Vert\bbbeta\Vert_{\infty,\O}^2}{2\nu\sigma}\Vert\bomega\Vert_{0,\O}^2
-\frac{\Vert\bbbeta\Vert_{\infty,\O}^2}{2\nu\sigma}\Vert\bomega\Vert_{0,\O}^2
-2\hat{c}^2\sigma\nu\Vert\curl\bomega\Vert_{0,\O}^2\\
& =\frac{\sigma}{12}\Vert\bu\Vert_{0,\O}^2+\hat{c}\left(1-3\hat{c}\sigma\right)\nu\Vert\curl\bomega\Vert_{0,\O}^2
+\left(1-\frac{2\Vert\bbbeta\Vert_{\infty,\O}^2}{\nu\sigma}\right)\Vert\bomega\Vert_{0,\O}^2,
\end{align*}
where we have used the bound $\Vert\bomega\times\bbbeta\Vert_{0,\O}\le2\Vert\bbbeta\Vert_{\infty,\O}\Vert\bomega\Vert_{0,\O}$.
Choosing $\hat{c}=1/(4\sigma)$ and exploiting \eqref{beta-assumption}, we arrive at 
$$\mathcal A((\bu,\bomega),(\tilde \bv,\tilde\btheta))\ge C\Vert(\bu,\bomega)\Vert_{\mathcal X}^2,$$
with $C$ independent of $\nu$. 
On the other hand, by construction we realise that   
$\Vert{\tilde{\btheta}}\Vert_{\Z}=\Vert\bomega\Vert_{\Z}$ and 
$\Vert\tilde{\bv}\Vert_{0,\O}\le C\hat{c}(\Vert\bu\Vert_{0,\O}+\Vert\bomega\Vert_{\Z})$,
and consequently 
$$\sup_{\stackrel{\scriptstyle (\bv,\btheta)\in\mathcal X}{(\bv,\btheta)\ne0}}\frac{\mathcal A((\bu,\bomega),(\bv,\btheta))}
{\Vert(\bv,\btheta)\Vert_{\mathcal X}}\ge \frac{\mathcal A((\bu,\bomega),(\tilde \bv,\tilde\btheta))}
{\Vert(\tilde\bv,\tilde\btheta)\Vert_{\mathcal X}}
\ge \alpha_2\Vert(\bu,\bomega)\Vert_{\mathcal X}\qquad\forall(\bu,\bomega)\in\mathcal X,$$
which finishes the proof.
\end{proof}

\begin{lemma}\label{le2:continuous-problem}
Suppose that the bound \eqref{beta-assumption} is satisfied. 
Then, there exists $C>0$ such that
\begin{equation*}
\disp\,\sup_{\stackrel{\scriptstyle (\bu,\bomega)\in {\mathcal X}}{(\bu,\bomega)\ne (0,0)}}\mathcal A((\bu,\bomega),(\bv,\btheta))\,>\,0\quad\forall\,(\bv,\btheta)\in \mathcal X.
\end{equation*}
\end{lemma}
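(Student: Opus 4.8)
The plan is to verify the non-degeneracy condition \eqref{cont-inf-sup-B} by exhibiting, for each fixed $(\bv,\btheta)\in\mathcal X\setminus\{(0,0)\}$, a single pair $(\bu,\bomega)\in\mathcal X$ for which $\mathcal A((\bu,\bomega),(\bv,\btheta))>0$; since the supremum in \eqref{cont-inf-sup-B} is not normalised, producing one such competitor suffices. The natural choice is to mirror the construction used in Lemma \ref{le1:continuous-problem}, but with the roles of the two arguments interchanged. Concretely, I would set $\bomega:=-\btheta\in\Z$ and $\bu:=\bv+\hat{c}\sqrt{\nu}\,\curl\btheta$, where $\hat{c}>0$ is to be fixed. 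As in the proof of Lemma \ref{le1:continuous-problem}, the field $\curl\btheta$ is solenoidal and, because $\btheta\in\Z$ satisfies $\btheta\times\bn=\cero$ on $\G$, its normal trace vanishes on $\G$; hence $\curl\btheta\in\X$, so $\bu\in\X$ and $(\bu,\bomega)\in\mathcal X$ is admissible.

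Substituting this pair into $\mathcal A$ and using the symmetry of $a(\cdot,\cdot)$ and $d(\cdot,\cdot)$, the two contributions $\pm\sqrt{\nu}\int_{\O}\curl\btheta\cdot\bv\,d\bx$ coming from $b_1(\bv,\bomega)$ and from the leading part of $b_1(\bu,\btheta)$ cancel, leaving
\[
\mathcal A((\bu,\bomega),(\bv,\btheta))=\sigma\Vert\bv\Vert_{0,\O}^2+\hat{c}\sqrt{\nu}\sigma\int_{\O}\curl\btheta\cdot\bv\,d\bx+\hat{c}\nu\Vert\curl\btheta\Vert_{0,\O}^2+\Vert\btheta\Vert_{0,\O}^2-\frac{1}{\sqrt{\nu}}\int_{\O}(\btheta\times\bbbeta)\cdot\bv\,d\bx.
\]
The next step is to control the two indefinite cross terms by Young's inequality. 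I would absorb a fraction of $\sigma\Vert\bv\Vert_{0,\O}^2$ against the term $\hat{c}\sqrt{\nu}\sigma\int_{\O}\curl\btheta\cdot\bv\,d\bx$, producing a remainder of the form $\hat{c}\nu(1-\hat{c}\sigma)\Vert\curl\btheta\Vert_{0,\O}^2$, and absorb the remaining fraction against the convective term, using $\Vert\btheta\times\bbbeta\Vert_{0,\O}\le 2\Vert\bbbeta\Vert_{\infty,\O}\Vert\btheta\Vert_{0,\O}$, with the Young weight tuned so that the coefficient on $\Vert\btheta\Vert_{0,\O}^2$ is exactly $2\Vert\bbbeta\Vert_{\infty,\O}^2/(\nu\sigma)$. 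Choosing e.g.\ $\hat{c}=1/(2\sigma)$ and invoking hypothesis \eqref{beta-assumption} then yields
\[
\mathcal A((\bu,\bomega),(\bv,\btheta))\ge C\left(\Vert\bv\Vert_{0,\O}^2+\nu\Vert\curl\btheta\Vert_{0,\O}^2+\Vert\btheta\Vert_{0,\O}^2\right)=C\Vert(\bv,\btheta)\Vert_{\mathcal X}^2,
\]
with $C>0$. Hence the supremum in \eqref{cont-inf-sup-B} is bounded below by $C\Vert(\bv,\btheta)\Vert_{\mathcal X}^2$, which is strictly positive whenever $(\bv,\btheta)\ne(0,0)$, completing the argument.

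The Young's-inequality bookkeeping is routine and entirely analogous to Lemma \ref{le1:continuous-problem}; in fact this construction delivers more than is strictly required, a full lower bound rather than mere positivity. The only genuinely delicate point is the admissibility claim $\curl\btheta\in\X$, i.e.\ that the vanishing tangential trace of $\btheta$ on $\G$ forces the normal trace of $\curl\btheta$ to vanish there; this is the same trace property already exploited in the proof of Lemma \ref{le1:continuous-problem} and should be cited as such. A secondary point to watch is the distribution of the Young weights, which must be arranged so that the convective contribution is absorbed by precisely the threshold $2\Vert\bbbeta\Vert_{\infty,\O}^2/(\nu\sigma)<1$ of assumption \eqref{beta-assumption}, rather than by a larger multiple that the hypothesis would fail to control.
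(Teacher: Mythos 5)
Your proof is correct, but it takes a more elaborate route than the paper and proves more than is required. The paper simply tests with $(\bu,\bomega)=(\bv,-\btheta)$: for this pair the two $b_1$ contributions cancel identically, leaving $\mathcal A((\bv,-\btheta),(\bv,\btheta))=\sigma\Vert\bv\Vert_{0,\O}^2+\Vert\btheta\Vert_{0,\O}^2-\frac{1}{\sqrt{\nu}}\int_{\O}(\btheta\times\bbbeta)\cdot\bv\,d\bx$, and a single application of Young's inequality combined with \eqref{beta-assumption} yields the lower bound $\frac{\sigma}{2}\Vert\bv\Vert_{0,\O}^2+\bigl(1-\frac{2\Vert\bbbeta\Vert_{\infty,\O}^2}{\nu\sigma}\bigr)\Vert\btheta\Vert_{0,\O}^2$, which is strictly positive whenever $(\bv,\btheta)\ne(0,0)$; that is all that \eqref{cont-inf-sup-B} demands, so no curl enrichment, no parameter $\hat{c}$, and no trace-admissibility argument are needed. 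Your choice $\bu=\bv+\hat{c}\sqrt{\nu}\curl\btheta$, $\bomega=-\btheta$ is nevertheless admissible --- the fact that $\btheta\times\bn=\cero$ on $\G$ forces $\curl\btheta\cdot\bn=0$ there is precisely what legitimises $\tilde{\bv}$ in the proof of Lemma~\ref{le1:continuous-problem}, so your appeal to that lemma is the right citation --- and your Young bookkeeping is consistent: with $\hat{c}=1/(2\sigma)$ the curl remainder $\hat{c}\nu(1-\hat{c}\sigma)\Vert\curl\btheta\Vert_{0,\O}^2$ stays positive while the convective term is absorbed at exactly the threshold allowed by \eqref{beta-assumption}. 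What the extra work buys is a strictly stronger conclusion: a lower bound of the form $C\Vert(\bv,\btheta)\Vert_{\mathcal X}^2$, i.e.\ the transposed analogue of \eqref{cont-inf-sup-Aa}, which quantifies non-degeneracy in the second argument rather than merely establishing it. Since Theorem~\ref{lemma:abc} only needs positivity in that slot once \eqref{cont-inf-sup-A} holds, the paper's shortcut is the more economical proof; yours is equally valid and would, in addition, provide a quantitative inf-sup constant for the adjoint problem.
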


\begin{proof}
For all $(\bv,\btheta)\in \mathcal X$,
we have that:
\begin{align*}
\mathcal A((\bv,-\btheta),(\bv,\btheta))
& =\sigma\Vert\bv\Vert_{0,\O}^2+\Vert\btheta\Vert_{0,\O}^2-\frac{1}{\sqrt{\nu}}\int_{\O}(\btheta\times\bbbeta)\cdot\bv\, d\bx\\
&\ge \sigma\Vert\bv\Vert_{0,\O}^2+\Vert\btheta\Vert_{0,\O}^2-\frac{\sigma}{2}\Vert\bv\Vert_{0,\O}^2
-\frac{2\Vert\bbbeta\Vert_{\infty,\O}^2}{\nu\sigma}\Vert\btheta\Vert_{0,\O}^2\\
&\ge \frac{\sigma}{2}\Vert\bv\Vert_{0,\O}^2+\left(1-\frac{2\Vert\bbbeta\Vert_{\infty,\O}^2}{\nu\sigma}\right)\Vert\btheta\Vert_{0,\O}^2.
\end{align*}
\end{proof}

As a consequence of the previous lemmas, we have the following result.

\begin{theorem}\label{th:continuous-problem}
Let us assume \eqref{beta-assumption}. 
Then, the variational problem~\eqref{eq:reduced} admits a unique solution
$(\bu,\bomega)\in\X\times\Z$. Moreover, there
exists $C>0$ such that
\begin{equation}\label{eq:stability}
\Vert\bu\Vert_{\H}+\Vert\bomega\Vert_{\Z}\le
C(\Vert\ff\Vert_{0,\O}+\Vert p_{\Sigma}\Vert_{1/2,\Sigma}+\Vert 
\bu_{\Sigma}\Vert_{-1/2,\Sigma}).
\end{equation}
\end{theorem}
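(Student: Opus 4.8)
The plan is to combine the two preceding lemmas with the abstract theorem to obtain existence and uniqueness, and then to establish the stability bound by tracking the continuity constants of the functionals $F$ and $G$. First I would observe that Lemma~\ref{le1:continuous-problem} provides the inf-sup condition \eqref{cont-inf-sup-A} for $\mathcal A(\cdot,\cdot)$ with constant $\alpha_2>0$, while Lemma~\ref{le2:continuous-problem} supplies exactly the nondegeneracy condition \eqref{cont-inf-sup-B}. Both hold under the single standing assumption \eqref{beta-assumption}. Thus the hypotheses of Theorem~\ref{lemma:abc} are met with $\mathcal X=\X\times\Z$, and applying it to the functional $\mathcal G(\bv,\btheta):=F(\bv)+G(\btheta)$ yields a unique $(\bu,\bomega)\in\X\times\Z$ solving the single equation $\mathcal A((\bu,\bomega),(\bv,\btheta))=\mathcal G(\bv,\btheta)$ for all $(\bv,\btheta)\in\mathcal X$. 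The point I would make explicit is that this scalar equation is precisely the pair of equations in \eqref{eq:reduced}: testing with $(\bv,\cero)$ recovers the first row and with $(\cero,\btheta)$ the second, so the two formulations coincide.

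Next I would address the a~priori estimate \eqref{eq:stability}. Theorem~\ref{lemma:abc} already delivers $\|(\bu,\bomega)\|_{\mathcal X}\le \alpha_2^{-1}\|\mathcal G\|_{\mathcal X'}$, so it remains to bound $\|\mathcal G\|_{\mathcal X'}$ in terms of the data. Here I would estimate the two functionals separately: using Cauchy--Schwarz and the trace-duality pairing on $\Sigma$, one has $|F(\bv)|\le \|\ff\|_{0,\O}\|\bv\|_{0,\O}+\|p_\Sigma\|_{1/2,\Sigma}\|\bv\cdot\bn\|_{-1/2,\Sigma}$, and the normal-trace continuity $\|\bv\cdot\bn\|_{-1/2,\Sigma}\le C\|\bv\|_{\H}$ controls the boundary term; similarly $|G(\btheta)|\le \sqrt\nu\,\|\bu_\Sigma\|_{-1/2,\Sigma}\|\btheta\|_{1/2,\Sigma}\le C\|\bu_\Sigma\|_{-1/2,\Sigma}\|\btheta\|_{\Z}$ via the tangential-trace continuity on $\Z$. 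Collecting these gives $\|\mathcal G\|_{\mathcal X'}\le C(\|\ff\|_{0,\O}+\|p_\Sigma\|_{1/2,\Sigma}+\|\bu_\Sigma\|_{-1/2,\Sigma})$, and combining with the abstract stability bound produces \eqref{eq:stability} after noting $\|\bu\|_{\H}+\|\bomega\|_{\Z}$ and $\|(\bu,\bomega)\|_{\mathcal X}$ are equivalent.

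The main obstacle I anticipate is not the abstract machinery, which is now essentially bookkeeping, but the care needed in justifying the boundary duality pairings and their continuity. In particular one must confirm that the relevant traces are well defined for functions in $\H$ and $\Z$ respectively, that the pairings $\langle\bv\cdot\bn,p_\Sigma\rangle_\Sigma$ and $\langle\bu_\Sigma,\btheta\rangle_\Sigma$ make sense on the partial boundary $\Sigma$ with the stated fractional-order norms, and that the associated trace operators are bounded with constants independent of $\nu$ where claimed. These are standard facts for $\H(\vdiv;\O)$ and $\H(\curl;\O)$, but invoking them correctly on a subset of $\partial\Omega$ and verifying the $\nu$-dependence is the most delicate part of an otherwise routine argument.
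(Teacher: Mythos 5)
Your proposal is correct and follows essentially the same route as the paper: it verifies the hypotheses of Theorem~\ref{lemma:abc} via Lemmas~\ref{le1:continuous-problem} and \ref{le2:continuous-problem} and applies the abstract result to the combined functional $\mathcal G(\bv,\btheta):=F(\bv)+G(\btheta)$ on $\X\times\Z$. The only difference is one of detail: the paper simply asserts that $\mathcal G$ is bounded, whereas you spell out the trace-duality estimates for $F$ and $G$ that turn the abstract bound $\|(\bu,\bomega)\|_{\mathcal X}\le \alpha_2^{-1}\|\mathcal G\|_{\mathcal X'}$ into the data-dependent estimate \eqref{eq:stability}.
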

\begin{proof}
It suffices to verify the hypotheses of Theorem~\ref{lemma:abc}.
First, we define the linear functional
$$\mathcal G(\bv,\btheta):=F(\bv)+G(\btheta),$$
which is bounded on $\X\times\Z$.
Thus, the proof follows from Lemmas~\ref{le1:continuous-problem} and \ref{le2:continuous-problem}.
\end{proof}

The following result establishes the corresponding stability estimate for the Bernoulli pressure.
\begin{corollary}\label{coro1}
Let $(\bu,\bomega)\in \X\times \Z$, be the unique solution of \eqref{eq:reduced},
with $\bu$ and $\bomega$ satisfying \eqref{eq:stability}. 
In addition, let $p\in Q$ be the unique pressure provided
by Lemma~\ref{lemma:equivalence}, so that $(\bu,\bomega,p)\in \H\times \Z\times \Q$
is the unique solution of \eqref{probform1}.
Then, there exists $C>0$ such that
\begin{equation*}
\Vert p\Vert_{0,\O}\le
C(\Vert\ff\Vert_{0,\O}+\Vert p_{\Sigma}\Vert_{1/2,\Sigma}+\Vert 
\bu_{\Sigma}\Vert_{-1/2,\Sigma}).
\end{equation*}
\end{corollary}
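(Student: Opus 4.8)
The plan is to recover the Bernoulli pressure bound from the inf-sup condition \eqref{eq:inf-sup-b2} for $b_2(\cdot,\cdot)$, using the first equation of \eqref{probform1} to transfer the estimate onto the already-controlled fields $\bu$ and $\bomega$. First I would test \eqref{eq:inf-sup-b2} with $q=p$, which gives
\begin{equation*}
\beta_2\Vert p\Vert_{0,\O}\le\sup_{\stackrel{\scriptstyle\bv\in\H}{\bv\ne0}}\frac{\vert b_2(\bv,p)\vert}{\Vert\bv\Vert_{\H}}.
\end{equation*}
Since $(\bu,\bomega,p)$ solves \eqref{probform1}, its first equation lets me isolate, for every $\bv\in\H$,
\begin{equation*}
b_2(\bv,p)=F(\bv)-a(\bu,\bv)-b_1(\bv,\bomega)-c(\bomega,\bv),
\end{equation*}
so the numerator above is completely determined by $\bu$, $\bomega$, and the data.

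Next I would bound each term on the right using the boundedness of the forms (the same estimates already invoked in Lemma~\ref{le1:continuous-problem}) together with the continuity of $F$. Explicitly, $\vert a(\bu,\bv)\vert\le\sigma\Vert\bu\Vert_{0,\O}\Vert\bv\Vert_{0,\O}$, $\vert b_1(\bv,\bomega)\vert\le\sqrt{\nu}\Vert\curl\bomega\Vert_{0,\O}\Vert\bv\Vert_{0,\O}\le\Vert\bomega\Vert_{\Z}\Vert\bv\Vert_{\H}$, and, using $\Vert\bomega\times\bbbeta\Vert_{0,\O}\le2\Vert\bbbeta\Vert_{\infty,\O}\Vert\bomega\Vert_{0,\O}$, one has $\vert c(\bomega,\bv)\vert\le 2\nu^{-1/2}\Vert\bbbeta\Vert_{\infty,\O}\Vert\bomega\Vert_{0,\O}\Vert\bv\Vert_{0,\O}$. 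For the functional, Cauchy--Schwarz and the normal-trace duality on $\Sigma$ give $\vert F(\bv)\vert\le C(\Vert\ff\Vert_{0,\O}+\Vert p_\Sigma\Vert_{1/2,\Sigma})\Vert\bv\Vert_{\H}$. Since $\Vert\bv\Vert_{0,\O}\le\Vert\bv\Vert_{\H}$, dividing by $\Vert\bv\Vert_{\H}$ and taking the supremum yields
\begin{equation*}
\Vert p\Vert_{0,\O}\le C\left(\Vert\bu\Vert_{\H}+\Vert\bomega\Vert_{\Z}+\Vert\ff\Vert_{0,\O}+\Vert p_\Sigma\Vert_{1/2,\Sigma}\right).
\end{equation*}

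Finally, I would insert the stability estimate \eqref{eq:stability} of Theorem~\ref{th:continuous-problem} to replace $\Vert\bu\Vert_{\H}+\Vert\bomega\Vert_{\Z}$ by the data norms, thereby obtaining the asserted bound. I do not expect a genuine obstacle here: the non-symmetric convective term $c(\cdot,\cdot)$ enters only through its boundedness, and the argument is a routine consequence of the already-proven inf-sup property and stability estimate. The only point deserving attention is the boundary contribution $\langle\bv\cdot\bn,p_\Sigma\rangle_\Sigma$ in $F$, for which one must invoke the continuity of the normal-trace map from $\H$ into $\H^{-1/2}(\Sigma)$ so that the duality pairing is dominated by $\Vert p_\Sigma\Vert_{1/2,\Sigma}\Vert\bv\Vert_{\H}$.
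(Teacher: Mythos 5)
Your proposal is correct and follows exactly the paper's own argument: combine the inf-sup condition \eqref{eq:inf-sup-b2} with the identity $b_2(\bv,p)=F(\bv)-a(\bu,\bv)-b_1(\bv,\bomega)-c(\bomega,\bv)$ from the first equation of \eqref{probform1}, then invoke boundedness of the forms and the stability estimate \eqref{eq:stability}. The explicit constants and the normal-trace duality remark you add are details the paper leaves implicit, but the route is the same.
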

\begin{proof}
Combining the inf-sup condition \eqref{eq:inf-sup-b2}
with the first equation in \eqref{probform1} gives the bound 
$$
\|p\|_{0,\Omega}\le \frac{1}{\beta_2}\sup_{\stackrel{\scriptstyle\bv\in\H}{\bv\ne0}}\frac{\vert
b_{2}(\bv,p)\vert}{\Vert\bv\Vert_{\H}}=\frac{1}{\beta_2}\sup_{\stackrel{
\scriptstyle\bv\in\H}{\bv\ne0}}\frac{\vert
F(\bv)-a(\bu,\bv)-b_1(\bv,\bomega)-c(\bomega,\bv)\vert}{\Vert\bv\Vert_
{ \H } } ,
$$
which together with \eqref{eq:stability}, and the boundedness
of $F$, $a$, $b_1$ and $c$, complete the proof.
\end{proof}

\begin{remark}
An alternative analysis for the nonsymmetric variational problem \eqref{probform1} can be 
carried out using a fixed-point  argument that allows a symmetrisation of the mixed structure. 
 The resulting weak form could then be analysed using classical tools for saddle-point problems, 
 for instance, following the similar treatment in \cite{caucao16}. Establishing inf-sup conditions for the 
off-diagonal bilinear forms in the original nonsymmetric formulation is, however, much more involved 
(see e.g. \cite{nicolaides82}). \end{remark}

\begin{remark}
Assumption \eqref{beta-assumption} 
holds provided one chooses $\sigma$ appropriately. As this parameter represents 
the inverse of the timestep, the aforementioned relation constitutes then a CFL-type condition. We also note that this  
bound for $\bbbeta$ coincides with the hypotheses that yield solvability of least-squares 
formulations for the Oseen problem analysed in \cite{chang07,cai16}.
\end{remark}

\section{Finite element discretisation} \label{sec:FE}
In this section we introduce a Galerkin scheme
for~\eqref{probform1} and analyse its well-posedness
by establishing suitable assumptions on the finite
element subspaces involved. Error estimates are also derived. 

\paragraph{Defining the discrete problem.}
Let $\{\cT_{h}(\O)\}_{h>0}$ be a shape-regular
family of partitions of the polyhedral region
$\bar\O$, by tetrahedrons $T$ of diameter $h_T$, with mesh size
$h:=\max\{h_T:\; T\in\cT_{h}(\O)\}$.
In what follows, given an integer $k\ge0$ and a subset
$S$ of $\R^3$, $\cP_k(S)$ will denote the space of polynomial functions
defined locally in $S$ and being of total degree $\leq k$. 

Moreover, for any $T\in\cT_{h}(\O)$, we introduce the local N\'ed\'elec space
$$\N_k(T):=\cP_{k}(T)^3\oplus R_{k+1}(T),$$ 
where $R_{k+1}(T)$ is a subspace of $\cP_{k+1}(T)^3$ composed by
homogeneous polynomials of degree $k+1$, and being orthogonal to $\bx$. With 
these tools, let us 
define the following finite element subspaces:
\begin{align}
\Z_h&:=\{\btheta_h\in\Z: \btheta_h|_T\in\N_k(T)\quad\forall T\in\cT_{h}(\O)\},\label{space1}\\
\Q_h&:=\{q_h\in\Q: q_h|_T\in\cP_{k}(T)\quad\forall T\in\cT_{h}(\O)\},\label{space2}\\
\H_h&:=\{\bv_h \in \H: \bv_h|_{T} \in RT_{k}(T)\quad\forall T\in \cT_h(\O) \},\label{space3}
\end{align}
where 
$RT_k(T)=\cP_{k}(T)^3\oplus\cP_{k}(T)\bx$ is the Raviart-Thomas space defined locally in $T\in \cT_h(\Omega)$. 

The proposed Galerkin scheme approximating \eqref{probform1} reads as follows:
 Find $(\bu_h,\bomega_h,p_h)\in\H_h\times\Z_h\times\Q_h$ such that 
\begin{alignat}{4}
&a(\bu_h,\bv_h)&+&\;b_{1}(\bv_h,\bomega_h)&+\;b_2(\bv_h,p_h)+\;c(\bomega_h,
\bv_h)&=\;
F(\bv_h)&\qquad\forall\bv_h\in\H_h,\nonumber\\
&b_{1}(\bu_h,\btheta_h)&-&\;d(\bomega_h,
\btheta_h)&&=\;G(\btheta_h)&\qquad\forall\btheta_h\in\Z_h,\label{probdics}\\
&b_2(\bu_h,q_h)&&&&=\;0&\qquad\forall q_h\in\Q_h.\nonumber
\end{alignat}

\paragraph{Solvability and stability of the discrete problem.}
The analysis of the Galerkin formulation will follow 
the same arguments exploited in the continuous setting.
Let us then consider the discrete kernel of $b_2$:
\begin{equation}\label{kernel-h}
 \X_h := \{ \bv_h \in \H_h\,:\, b_2(\bv_h,q_h)=0,\quad \forall \, q\in \Q_h\}=
 \{ \bv_h \in \H_h\,:\, \vdiv \bv_h\equiv 0\quad {\rm in}\quad \Omega\},
\end{equation}
where the characterisation is indeed possible 
thanks to the inclusion  $\vdiv \H_h\subseteq \Q_h$. Moreover,
it is well-known that the following discrete inf-sup condition holds
(see \cite{G2014}):
\begin{equation}\label{eq:inf-sup-b2-h}
\sup_{\stackrel{\scriptstyle\bv_h\in\H_h}{\bv_h\ne0}}\frac{ b_{2}(\bv_h,q_h)}{\Vert\bv_h\Vert_{\H}}\ge
\tilde\beta_2\Vert q_h\Vert_{0,\O}\quad\forall q_h\in\Q_h.
\end{equation}

We again resort to a reduced version of the problem, now defined on the product space 
$\X_h\times \Z_h$. Find $(\bu_h,\bomega_h)\in \X_h\times \Z_h$ such that 
\begin{alignat}{4}
&a(\bu_h,\bv_h)      &+&\;b_{1}(\bv_h,\bomega_h)+\;c(\bomega_h,\bv_h)\,&=\;&
F(\bv_h)\,&\qquad\forall\,\bv_h\in\X_h,\nonumber\\
&b_{1}(\bu_h,\btheta_h)&-&\;d(\bomega_h,\btheta_h)\,&=\;&G(\btheta_h)\,
&\qquad\forall\,\btheta_h\in\Z_h\label{eq:reduced-h},
\end{alignat}
and its equivalence with \eqref{probdics} is once more a direct consequence of
the inf-sup condition \eqref{eq:inf-sup-b2-h}.
\begin{lemma}\label{lemma:equivalence-h}
If $(\bu_h,\bomega_h,p_h)\in \H_h\times\Z_h\times\Q_h$ is a solution of \eqref{probdics}, then $\bu_h\in \X_h$, and $(\bu_h,\bomega_h)\in \X_h\times\Z_h$
is also a solution of \eqref{eq:reduced-h}. Conversely, if $(\bu_h,\bomega_h)\in \X_h\times\Z_h$ is a solution of \eqref{eq:reduced-h},
then there exists a unique $p_h\in\Q_h$ such that $(\bu_h,\bomega_h,p_h)\in \H_h\times\Z_h\times\Q_h$ is a solution of \eqref{probdics}.
\end{lemma}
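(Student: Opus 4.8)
The plan is to reproduce, almost verbatim, the argument used for the continuous equivalence in Lemma~\ref{lemma:equivalence}, now replacing the spaces $\H,\Z,\Q$ by their discrete counterparts $\H_h,\Z_h,\Q_h$ and the inf-sup condition \eqref{eq:inf-sup-b2} by its discrete version \eqref{eq:inf-sup-b2-h}. The two structural ingredients I would rely on are: first, that $\X_h$ is exactly the kernel of $b_2(\cdot,\cdot)$ restricted to $\H_h$, as recorded in the characterisation \eqref{kernel-h}; and second, that $b_2$ is inf-sup stable on $\H_h\times\Q_h$.

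For the forward implication, I would start from a solution $(\bu_h,\bomega_h,p_h)$ of \eqref{probdics}. The third equation states precisely that $b_2(\bu_h,q_h)=0$ for all $q_h\in\Q_h$, so by \eqref{kernel-h} we have $\bu_h\in\X_h$. Since $\X_h\subset\H_h$, the first equation of \eqref{probdics} holds in particular for every test function $\bv_h\in\X_h$; for such $\bv_h$ the term $b_2(\bv_h,p_h)$ vanishes by definition of the kernel, leaving exactly the first equation of \eqref{eq:reduced-h}. The second equation of \eqref{probdics} is already posed over all $\btheta_h\in\Z_h$ and carries over unchanged. Hence $(\bu_h,\bomega_h)$ solves \eqref{eq:reduced-h}.

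For the converse, given a solution $(\bu_h,\bomega_h)\in\X_h\times\Z_h$ of \eqref{eq:reduced-h}, I would introduce the functional
$$\ell(\bv_h):=F(\bv_h)-a(\bu_h,\bv_h)-b_1(\bv_h,\bomega_h)-c(\bomega_h,\bv_h),\qquad \bv_h\in\H_h,$$
and look for $p_h\in\Q_h$ with $b_2(\bv_h,p_h)=\ell(\bv_h)$ for all $\bv_h\in\H_h$. The first equation of \eqref{eq:reduced-h} says precisely that $\ell$ annihilates $\X_h=\Ker(b_2)$. By \eqref{eq:inf-sup-b2-h}, the map $q_h\mapsto b_2(\cdot,q_h)$ is injective from $\Q_h$ into $\H_h'$, and in finite dimensions its range coincides with the annihilator of $\X_h$; consequently there exists a unique $p_h\in\Q_h$ representing $\ell$. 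With this $p_h$ the first equation of \eqref{probdics} holds by construction, the second is inherited from \eqref{eq:reduced-h}, and the third follows from $\bu_h\in\X_h$. Uniqueness of $p_h$ is again a direct consequence of the injectivity encoded in \eqref{eq:inf-sup-b2-h}.

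The only step demanding any care — and the natural candidate for the main obstacle — is the representation argument in the converse: confirming that a functional vanishing on the discrete kernel lies in the range of $b_2(\cdot,\cdot)$. In finite dimensions this is the standard closed-range (rank-nullity) fact underlying Brezzi's theory, and it reduces entirely to the injectivity and the range identification furnished by \eqref{eq:inf-sup-b2-h}; no estimates beyond those already established for the continuous problem are required.
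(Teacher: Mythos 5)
Your proof is correct and follows exactly the route the paper intends: the paper states this lemma without a written proof, remarking only that it is ``once more a direct consequence of the inf-sup condition \eqref{eq:inf-sup-b2-h}'', which is precisely the mechanism you spell out (kernel characterisation \eqref{kernel-h} for the forward direction, and the standard finite-dimensional closed-range/annihilator argument for existence and uniqueness of $p_h$ in the converse). Your expanded version is a faithful instantiation of the Girault--Raviart argument already cited for the continuous analogue, Lemma~\ref{lemma:equivalence}.
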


In order to establish the well-posedness of \eqref{eq:reduced-h},
we will employ the following discrete version of Theorem~\ref{lemma:abc}.

\begin{theorem}
Assume \eqref{beta-assumption}. 
Let $k\ge0$ be an integer and let $\X_h$ and $\Z_h$
be given by \eqref{kernel-h} and \eqref{space1}, respectively.
Then, there exists a unique $(\bu_h,\bomega_h)\in\X_h\times\Z_h$
solution of the discrete scheme~\eqref{eq:reduced-h}.
Moreover, there exist positive constants $\hat{C}_1,\,\hat{C}_2>0$
independent of $h$ such that
\begin{equation}\label{eq:stability-h}
\Vert\bu_h\Vert_{\H}+\Vert\bomega_h\Vert_{\Z}\le
\hat{C}_1(\Vert\ff\Vert_{0,\O}+\Vert p_{\Sigma}\Vert_{1/2,\Sigma}+\Vert 
\bu_{\Sigma}\Vert_{-1/2,\Sigma}),
\end{equation}
and
\begin{equation}\label{ceaest}
\Vert\bu-\bu_h\Vert_{\H}+\Vert\bomega-\bomega_h\Vert_{\Z}
\le\hat{C}_2\inf_{(\bv_h,\btheta_h)\in\X_h\times\Z_h}
(\Vert\bu-\bv_h\Vert_{\H}+\Vert\bomega-\btheta_h\Vert_{\Z}),
\end{equation}
where $(\bu,\bomega)\in\X\times\Z$ is the
unique solution to \eqref{eq:reduced}.
\end{theorem}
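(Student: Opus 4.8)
The plan is to mirror the continuous analysis carried out in Lemmas~\ref{le1:continuous-problem} and \ref{le2:continuous-problem}, now applied to the discrete bilinear form $\mathcal A(\cdot,\cdot)$ restricted to the product space $\mathcal X_h := \X_h\times\Z_h$. Since $\mathcal X_h$ is a closed subspace of the Hilbert space $\mathcal X$, it is itself a Hilbert space, and the boundedness estimate \eqref{acot} holds verbatim on $\mathcal X_h$ by simple restriction. The two inf-sup-type hypotheses of the discrete analogue of Theorem~\ref{lemma:abc} must then be verified. The decisive structural fact that makes the discrete argument work exactly as in the continuous case is the inclusion $\curl\Z_h\subseteq\H_h$ (more precisely $\curl\Z_h\subseteq\X_h$, since discrete curls are divergence-free), which is the discrete de Rham compatibility between the N\'ed\'elec space $\Z_h$ and the Raviart-Thomas space $\H_h$. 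This is what allows the test-function construction to remain internal to the discrete kernel.

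First I would establish the discrete counterpart of \eqref{cont-inf-sup-Aa}. Given $(\bu_h,\bomega_h)\in\mathcal X_h$, I would set
$$\tilde\btheta_h:=-\bomega_h\in\Z_h,\qquad \tilde\bv_h:=\bu_h+\hat c\sqrt{\nu}\,\curl\bomega_h\in\X_h,$$
with $\hat c=1/(4\sigma)$ exactly as before. The crucial point is that $\tilde\bv_h$ genuinely belongs to $\X_h$: by construction $\curl\bomega_h\in\H_h$ and, being a curl, it is divergence-free, hence lies in $\X_h$. With this admissible pair, the chain of inequalities in the proof of Lemma~\ref{le1:continuous-problem} is purely algebraic and local in the integrands, so it transfers unchanged to give $\mathcal A((\bu_h,\bomega_h),(\tilde\bv_h,\tilde\btheta_h))\ge C\Vert(\bu_h,\bomega_h)\Vert_{\mathcal X}^2$ under assumption \eqref{beta-assumption}, together with the norm control $\Vert(\tilde\bv_h,\tilde\btheta_h)\Vert_{\mathcal X}\le C\Vert(\bu_h,\bomega_h)\Vert_{\mathcal X}$. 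Dividing yields the discrete inf-sup \eqref{cont-inf-sup-Aa} with an $h$-independent constant $\alpha_2$. For the second hypothesis, I would test with $(\bv_h,-\btheta_h)$ against $(\bv_h,\btheta_h)$ exactly as in Lemma~\ref{le2:continuous-problem}; this computation involves no integration by parts and only the pointwise bound $\Vert\btheta_h\times\bbbeta\Vert_{0,\O}\le 2\Vert\bbbeta\Vert_{\infty,\O}\Vert\btheta_h\Vert_{0,\O}$, so it is immediate at the discrete level and gives the required positivity.

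With both hypotheses verified, the discrete version of Theorem~\ref{lemma:abc} delivers existence, uniqueness, and the a~priori bound $\Vert(\bu_h,\bomega_h)\Vert_{\mathcal X}\le\alpha_2^{-1}\Vert\mathcal G\Vert_{\mathcal X'}$. Since $\mathcal G(\bv_h,\btheta_h)=F(\bv_h)+G(\btheta_h)$, estimating $\Vert\mathcal G\Vert_{\mathcal X'}$ through the definitions of $F$ and $G$ and the trace inequalities produces the stability bound \eqref{eq:stability-h}, with $\hat C_1$ independent of $h$ precisely because $\alpha_2$ is. Finally, I would obtain the C\'ea estimate \eqref{ceaest} by the standard argument: Galerkin orthogonality from subtracting \eqref{eq:reduced-h} from \eqref{eq:reduced} tested on $\X_h\times\Z_h$, combined with the uniform discrete inf-sup constant $\alpha_2$ and the boundedness constant $\alpha_1$, yields quasi-optimality with $\hat C_2\sim\alpha_1/\alpha_2$ after inserting an arbitrary $(\bv_h,\btheta_h)\in\X_h\times\Z_h$ and applying the triangle inequality.

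The step I expect to require the most care is the verification that $\tilde\bv_h=\bu_h+\hat c\sqrt{\nu}\,\curl\bomega_h$ actually lands in $\X_h$ rather than merely in $\H_h$; everything downstream depends on the test pair being admissible within the discrete kernel. This hinges on the commuting-diagram property $\curl(\N_k(T))\subseteq RT_k(T)$ of the N\'ed\'elec and Raviart-Thomas elements together with $\vdiv\curl\equiv0$, which is exactly the discrete exact-sequence structure that also underlies the kernel characterisation \eqref{kernel-h}. Once this compatibility is invoked, no genuinely new estimate is needed beyond those already proven in the continuous setting, and the remainder is a routine transcription.
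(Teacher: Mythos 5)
Your proposal is correct and follows essentially the same route as the paper: the same discrete test pair $\tilde\btheta_h=-\bomega_h$, $\tilde\bv_h=\bu_h+\frac{\sqrt{\nu}}{4\sigma}\curl\bomega_h$, the same transcription of Lemmas~\ref{le1:continuous-problem} and~\ref{le2:continuous-problem}, and the C\'ea estimate by classical arguments. In fact you spell out the one point the paper leaves implicit, namely that the inclusion $\curl\Z_h\subseteq\X_h$ (the N\'ed\'elec--Raviart--Thomas exact-sequence property plus $\vdiv\curl\equiv0$) is what makes $\tilde\bv_h$ an admissible test function in the discrete kernel.
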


\begin{proof}
Let us define $\mathcal X_h:=\X_h\times\Z_h$ and reuse 
the forms $\mathcal A(\cdot,\cdot)$ and $\mathcal G(\cdot)$ 
as in the proof of Lemma~\ref{th:continuous-problem}. 
The next step consists in proving that 
$\mathcal A(\cdot,\cdot)$ satisfies the discrete version
of the inf-sup conditions \eqref{cont-inf-sup-A}-\eqref{cont-inf-sup-B}, 
as in Lemmas~\ref{le1:continuous-problem} and
~\ref{le2:continuous-problem}.
In order to assert \eqref{cont-inf-sup-A}, we consider
$(\bu_h,\bomega_h)\in\mathcal X_h$, and define 
$$\tilde{\btheta}_h:=-\bomega_h\in\Z_h,\qquad \text{and }
\tilde{\bv}_h:=(\bu_h+\frac{\sqrt{\nu}}{4\sigma}\curl\bomega_h)\in\X_h.$$
Then, repeating exactly the same steps used
in the proof of Lemma~\ref{le1:continuous-problem}
the discrete version of \eqref{cont-inf-sup-A} follows. Regarding 
the discrete version of \eqref{cont-inf-sup-B}, 
we once again repeat the same arguments
given in the proof of Lemma~\ref{le2:continuous-problem}.
Finally, the C\'ea estimate follows from classical arguments.
\end{proof}

We now state the stability and an adequate approximation property of the discrete pressure.
\begin{corollary}
Let $(\bu_h,\bomega_h)\in \X_h\times \Z_h$ be the unique solution
of \eqref{eq:reduced-h}, with $\bu_h$ and $\bomega_h$ satisfying \eqref{eq:stability-h}. 
In addition, let $p_h\in Q_h$ be the unique discrete Bernoulli 
pressure provided by Lemma \ref{lemma:equivalence-h},
so that $(\bu_h,\bomega_h,p_h)\in \H_h\times \Z_h\times \Q_h$
is the unique solution of \eqref{probdics}.
Then, there exist positive constants $\bar C_1,\,\bar C_2>0$,
independent of $h$ and $\nu$, such that
\begin{equation*}
\Vert p_h\Vert_{0,\O}\le
\bar C_1(\Vert\ff\Vert_{0,\O}+\Vert p_{\Sigma}\Vert_{1/2,\Sigma}+\Vert 
\bu_{\Sigma}\Vert_{-1/2,\Sigma}),
\end{equation*}
and
\begin{equation}\label{ceaest-p}
\Vert p-p_h\Vert_{0,\O}\le\bar{C}_2\inf_{(\bv_h,\btheta_h,q_h)\in\H_h\times\Z_h\times Q_h}
(\Vert\bu-\bv_h\Vert_{\H}+\Vert\bomega-\btheta_h\Vert_{\Z}+\Vert p-q_h\Vert_{0,\O}).
\end{equation}
\end{corollary}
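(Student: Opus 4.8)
The plan is to mimic, at the discrete level, the argument used in Corollary~\ref{coro1}: both bounds will follow from the discrete inf-sup condition \eqref{eq:inf-sup-b2-h} for $b_2$ together with the velocity-vorticity estimates \eqref{eq:stability-h} and \eqref{ceaest} already at our disposal. For the stability bound, I would first combine \eqref{eq:inf-sup-b2-h} with the first equation of \eqref{probdics} to write
$$\Vert p_h\Vert_{0,\O}\le\frac{1}{\tilde\beta_2}\sup_{\stackrel{\scriptstyle\bv_h\in\H_h}{\bv_h\ne0}}\frac{\vert b_2(\bv_h,p_h)\vert}{\Vert\bv_h\Vert_{\H}}=\frac{1}{\tilde\beta_2}\sup_{\stackrel{\scriptstyle\bv_h\in\H_h}{\bv_h\ne0}}\frac{\vert F(\bv_h)-a(\bu_h,\bv_h)-b_1(\bv_h,\bomega_h)-c(\bomega_h,\bv_h)\vert}{\Vert\bv_h\Vert_{\H}}.$$
Invoking the boundedness of $F$, $a$, $b_1$ and $c$ and then inserting \eqref{eq:stability-h} for $\bu_h$ and $\bomega_h$ delivers the first inequality with a constant $\bar C_1$ independent of $h$ and $\nu$.

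For the error estimate the key ingredient is Galerkin orthogonality. Since $\H_h\subset\H$, I would test the first equation of the continuous problem \eqref{probform1} with an arbitrary $\bv_h\in\H_h$ and subtract the first equation of \eqref{probdics}, obtaining
$$a(\bu-\bu_h,\bv_h)+b_1(\bv_h,\bomega-\bomega_h)+b_2(\bv_h,p-p_h)+c(\bomega-\bomega_h,\bv_h)=0\qquad\forall\,\bv_h\in\H_h.$$
The essential observation is that $b_2$ vanishes identically on $\X_h$, so the pressure error cannot be controlled inside the kernel and must instead be estimated over the full space $\H_h$ through \eqref{eq:inf-sup-b2-h}. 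Fixing an arbitrary $q_h\in\Q_h$ and applying the discrete inf-sup condition to $q_h-p_h\in\Q_h$, I would split
$$b_2(\bv_h,q_h-p_h)=b_2(\bv_h,q_h-p)+b_2(\bv_h,p-p_h),$$
bounding the first summand by the continuity of $b_2$ and rewriting the second summand, via the orthogonality above, as $-a(\bu-\bu_h,\bv_h)-b_1(\bv_h,\bomega-\bomega_h)-c(\bomega-\bomega_h,\bv_h)$.

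Bounding each of these terms by the boundedness constants of $a$, $b_1$ and $c$ times $\Vert\bu-\bu_h\Vert_{\H}+\Vert\bomega-\bomega_h\Vert_{\Z}$, and combining with the triangle inequality $\Vert p-p_h\Vert_{0,\O}\le\Vert p-q_h\Vert_{0,\O}+\Vert q_h-p_h\Vert_{0,\O}$, one arrives at
$$\Vert p-p_h\Vert_{0,\O}\le C\bigl(\Vert\bu-\bu_h\Vert_{\H}+\Vert\bomega-\bomega_h\Vert_{\Z}+\Vert p-q_h\Vert_{0,\O}\bigr).$$
It then remains to bound the velocity and vorticity errors on the right-hand side by the already established C\'ea estimate \eqref{ceaest}, and to take the infimum over $q_h\in\Q_h$, which yields \eqref{ceaest-p}.

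I expect the main difficulty to be organisational rather than conceptual, centred on two points. First, one must lift the estimate from the kernel $\X_h$ — where $b_2$ is degenerate — to the full space $\H_h$, which is precisely where \eqref{eq:inf-sup-b2-h} provides control. Second, to keep $\bar C_1$ and $\bar C_2$ independent of $\nu$ one has to exploit that the viscosity-weighted norm on $\Z$ absorbs the factor $\sqrt{\nu}$ appearing in $b_1$, while the CFL-type assumption \eqref{beta-assumption} bounds the factor $\nu^{-1/2}\Vert\bbbeta\Vert_{\infty,\O}$ entering $c$; both continuity constants are thereby rendered $\nu$-independent.
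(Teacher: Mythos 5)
Your proof is correct and takes essentially the same route as the paper: its proof of this corollary is literally ``repeat the argument of Corollary~\ref{coro1}, but with the discrete inf-sup condition \eqref{eq:inf-sup-b2-h}'', which is exactly what you do, additionally supplying the Galerkin-orthogonality argument for \eqref{ceaest-p} that the paper omits entirely. The only step you (like the paper) leave implicit is that your chain of estimates, after inserting \eqref{ceaest}, produces the infimum over $\X_h\times\Z_h\times\Q_h$, whereas \eqref{ceaest-p} is stated over $\H_h\times\Z_h\times\Q_h$; this is bridged by the standard fact that, thanks to \eqref{eq:inf-sup-b2-h} and $\vdiv\bu=0$, the distance from $\bu$ to the discrete kernel $\X_h$ is bounded by a constant multiple of its distance to the full space $\H_h$.
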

\begin{proof}
The result follows using the same arguments
considered in the proof of Corollary~\ref{coro1},
but using the discrete inf-sup condition \eqref{eq:inf-sup-b2-h}.
We omit further details.
\end{proof}

\paragraph{A priori error estimates.}
Let us 
introduce for a given $s>1/2$, the N\'edel\'ec global interpolation operator
$\CR_{h}:\H^s(\curl;\O)\cap\Z\to\Z_h$. From \cite{Alo-Valli} we known that 
for all $\btheta\in\H^{s}(\curl;\O)$ with $s>1/2$, 
there exists $C>0$ independent of $h$, such that
\begin{equation}\label{prop2N}
\Vert \btheta-\CR_{h}\btheta\Vert_{\Z}\le
Ch^{\min\{s,k+1\}}\Vert\btheta\Vert_{\H^{s}(\curl;\O)}.
\end{equation}
On the other hand, for the Raviart-Thomas interpolation 
$\Pi_h:\HsO^{3}\cap\H\to\H_h$, with $s>0$, we recall (see e.g. \cite{G2014}) that 
there exists $C>0$, independent of $h$, such that for all $s>0$:
\begin{equation}\label{prop2rt}
\Vert \bv-\Pi_h\bv\Vert_{\H}\le
Ch^{\min\{s,k+1\}}\Vert\bv\Vert_{\H^s(\vdiv;\O)}\quad\forall \bv\in\H^s(\vdiv;\O)\cap\H.
\end{equation}
Finally we recall that the orthogonal projection from $\LO$ onto the finite element subspace $\Q_h$, 
here denoted $\CP_h$, satisfies the following error estimate for all $s>0$:
\begin{equation}\label{cotar}
\Vert q-\CP_h q\Vert_{0,\O}\le Ch^{\min\{s,k+1\}}\Vert q\Vert_{s,\O}\quad\forall q\in\HsO.
\end{equation}
These operators fulfil the following commuting diagram 
\begin{equation*}
\vdiv\CR_h\bv=\CP_h(\vdiv\bv)\quad\forall \bv\in\HsO^{3}\cap\hdivO.
\end{equation*}

The following result summarises the error analysis for our mixed
finite element scheme \eqref{probdics}.
\begin{theorem}\label{theo:conv}
Let $k\ge 0$ be an integer and let $\H_h,\Z_h$ and $\Q_h$
be given by \eqref{space1}, \eqref{space2}, and \eqref{space3}.
Let $(\bu,\bomega,p)\in\H\times\Z\times\Q$ and
$(\bu_h,\bomega_h,p_h)\in\H_h\times\Z_h\times\Q_h$ be the unique solutions
to the continuous and discrete problems \eqref{probform1} and
\eqref{probdics}, respectively.  Assume that
$\bu\in\HsO^3$, $\vdiv\bu\in\HsO$, $\bomega\in\HusO^3$ and $p\in\HsO$, for some
$s>1/2$. Then, there exists $\hat{C}>0$ independent of $h$ such
  that
$${\Vert\bu-\bu_h\Vert_{\H}+\Vert\bomega-\bomega_h\Vert_{\Z}+\Vert
  p-p_h\Vert_{0,\O} \leq\hat{C}h^{\min\{s,k+1\}}(\Vert\bu\Vert_{\H^s(\vdiv;\O)}+\|\bomega\|_{\H^s(\curl;\O)}+\Vert p\Vert_{s,\O}).}$$
\end{theorem}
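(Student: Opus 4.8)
The plan is to combine the abstract Céa estimate \eqref{ceaest} from the previous theorem, the pressure approximation bound \eqref{ceaest-p} from the corollary, and the interpolation error estimates \eqref{prop2N}, \eqref{prop2rt}, \eqref{cotar}. Since the hard analytic work (the discrete inf-sup conditions guaranteeing quasi-optimality) has already been done, the present theorem is essentially a matter of bounding the best-approximation infima by concrete interpolation errors. First I would invoke \eqref{ceaest}, which reduces the velocity-vorticity error to the infimum over $\X_h\times\Z_h$; choosing the test pair to be the Raviart--Thomas interpolant $\Pi_h\bu$ and the N\'ed\'elec interpolant $\CR_h\bomega$, I obtain
\begin{equation*}
\Vert\bu-\bu_h\Vert_{\H}+\Vert\bomega-\bomega_h\Vert_{\Z}
\le \hat{C}_2\bigl(\Vert\bu-\Pi_h\bu\Vert_{\H}+\Vert\bomega-\CR_h\bomega\Vert_{\Z}\bigr).
\end{equation*}

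The one point that requires care is admissibility of $\Pi_h\bu$ as a competitor in the \emph{discrete-kernel} infimum of \eqref{ceaest}: the infimum runs over $\X_h$, i.e. over \emph{exactly divergence-free} discrete velocities, not over all of $\H_h$. This is exactly where the commuting diagram property $\vdiv\CR_h\bv=\CP_h(\vdiv\bv)$ (stated just before the theorem, with $\Pi_h$ in place of $\CR_h$ for the $\H$-conforming operator) becomes essential: since $\vdiv\bu=0$ in $\Omega$, the commuting diagram yields $\vdiv\Pi_h\bu=\CP_h(\vdiv\bu)=0$, so $\Pi_h\bu\in\X_h$ and is a legitimate choice. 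I expect this verification — rather than any estimate — to be the main (though mild) obstacle, as it is the only place where the structural hypothesis $\vdiv\bu=0$ enters nontrivially. Applying the interpolation bounds \eqref{prop2N} and \eqref{prop2rt} with exponent $\min\{s,k+1\}$ then controls the right-hand side by $h^{\min\{s,k+1\}}(\Vert\bu\Vert_{\H^s(\vdiv;\O)}+\Vert\bomega\Vert_{\H^s(\curl;\O)})$; note the regularity hypotheses $\bu\in\HsO^3$ with $\vdiv\bu\in\HsO$ (so that $\bu\in\H^s(\vdiv;\O)$) and $\bomega\in\HusO^3$ (which gives in particular $\bomega\in\H^s(\curl;\O)$) are precisely what these estimates demand.

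For the pressure, I would simply feed the same two interpolation estimates, together with the projection bound \eqref{cotar} applied to $\CP_h p$, into the pressure quasi-optimality estimate \eqref{ceaest-p}. Since that infimum is over the full product space $\H_h\times\Z_h\times\Q_h$ with no divergence constraint, the choice $(\Pi_h\bu,\CR_h\bomega,\CP_h p)$ is unconditionally admissible, and \eqref{cotar} supplies the $h^{\min\{s,k+1\}}\Vert p\Vert_{s,\O}$ contribution. Adding the velocity-vorticity and pressure bounds, collecting the three interpolation estimates under the common factor $h^{\min\{s,k+1\}}$, and relabelling the constant as $\hat C$ completes the argument. The whole proof is thus a short assembly of previously established facts, and no new inf-sup or coercivity estimate is needed.
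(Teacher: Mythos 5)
Your proposal is correct and takes essentially the same route as the paper, whose proof is precisely this assembly of the C\'ea estimates \eqref{ceaest} and \eqref{ceaest-p} with the interpolation bounds \eqref{prop2N}, \eqref{prop2rt} and \eqref{cotar}. In fact you make explicit a point the paper leaves implicit: that the commuting-diagram property together with $\vdiv\bu=0$ gives $\vdiv\Pi_h\bu=\CP_h(\vdiv\bu)=0$, so $\Pi_h\bu\in\X_h$ is an admissible competitor in the discrete-kernel infimum of \eqref{ceaest}.
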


\begin{proof}
The proof follows from \eqref{ceaest}, \eqref{ceaest-p}, and standard interpolation 
estimates satisfied by the operators $\CR_h$, $\Pi_h$ and $\CP_h$ (see \eqref{prop2N},
\eqref{prop2rt} and \eqref{cotar}, respectively).
\end{proof}\medskip

\section{Discontinuous Galerkin method} \label{sec:DG}
In this section, we propose and analyse a DG method for \eqref{oseen-cont}. We provide
solvability and stability of the discrete scheme by introducing suitable numerical fluxes. 
A priori error estimates are also derived. 

\paragraph{Preliminaries.} 
Apart from the definitions laid out at the beginning of Section~\ref{sec:FE}, let us denote by $\cE_h$ the
set of internal faces, by $\cF_h^{\Sigma}$ the set of
external faces on $\Sigma$ and by $\cF_h^{\Gamma}$ the
set of external faces on $\Gamma$. We set
$\cF_h=\cE_h \cup \cF_h^{\Sigma}\cup \cF_h^{\Gamma}$. 
We denote by $h_e$ the diameter of each face $e$.
Let $T^+$ and $T^-$ be two adjacent elements of $\cT_h$
and let $\bn^+$ (respectively $\bn^-$) be the outward
unit normal vector on $\partial T^+$ (respectively $\partial T^-$).
For a vector field $\bu$, we denote by $\bu^\pm$ the trace
of $\bu$ from the interior of $T^\pm$. We define jumps 
$$\jumpT{\bv} := \bv^+ \times \bn^+ +\bv^- \times \bn^- , \qquad \jumpN{\bv} := \bv^+ \cdot \bn^+ +\bv^- \cdot \bn^{-}, \quad \jump{q}:=q^+ \bn^+ + q^- \bn^- ,$$ 
and averages 
$$ \avg{\bv}:=\frac{1}{2} (\bv^+ +\bv^-),\qquad\avg{ q}:=\frac{1}{2} (q^+ +q^-),$$
and adopt the convention that for boundary faces $e\in\cF_h^\Sigma\cup\cF_h^\Gamma$,
we set $\jumpT{\bv}=\bv\times \bn$, $\jumpN{\bv}=\bv\cdot\bn$,
$\jump{q}=q\bn$, $\avg{\bv}=\bv$ and $\avg{q}=q$.   

Suitable finite dimensional spaces for vorticity
and velocity that remove the restriction of continuity are defined by:
\begin{align*}
\tilde\Z_h&:=\{\btheta_h\in\L^2(\O)^3: \btheta_h|_T\in\cP_{k}(T)^3\quad\forall T\in\cT_{h}\},\\
\tilde\H_h&:=\{\bv_h \in \L^2(\O)^3: \bv_h|_{T} \in \cP_{k+1}(T)^3\quad\forall T\in \cT_h\},
\end{align*}
and we remark that the space for pressure approximation will 
 coincide with the one used in Section~\ref{sec:FE}, that is $\tilde\Q_h := \Q_h$.   

\paragraph{Discrete formulation and solvability analysis.} 
Multiplying each equation in  \eqref{oseen-cont}
by suitable functions, the resulting DG scheme consists in finding 
$(\bu_h,\, \bomega_h,\,p_h)\in \tilde\H_h \times \tilde Z_h\times \tilde\Q_h$,
such that for any test functions
$(\bv_h,\, \btheta_h,\,q_h)\in \tilde\H_h \times \tilde Z_h\times \tilde\Q_h$
and for all elements $T$ in the partition $\cT_h$ 
 \begin{eqnarray}
  &{}&\sigma \int_{T} \bu_h\cdot\bv_h \,d\bx +\sqrt{\nu}\int_{T} \bomega_h\cdot \curl \bv_h \,d\bx +\sqrt{\nu}\int_{\partial T} \widehat\bomega_h\cdot(\bv_h\times \bn) \,ds +\frac{1}{\sqrt{\nu}} \int_{T} (\bomega_h\times \bbbeta) \cdot \bv_h \,d \bx\nonumber\\&-&\int_{T} p_h\vdiv \bv_h \, d\bx+ \int_{\partial T} \widehat p_h \bv_h\cdot\bn \,ds= \int_{T} \ff \cdot \bv_h\, d\bx,\label{eq1 DG with flux}\\
    &{}& \int_{T} \bomega_h\cdot \btheta_h \,d\bx =\sqrt{\nu}\int_{T}\bu_h\cdot \curl \btheta_h\, d\bx +\sqrt{\nu}\int_{\partial T} \widehat \bu_h^{\omega} \cdot(\btheta_h\times \bn) \,ds,\label{eq2 DG with flux}\\
        &{}& -\int_{T} \bu_h\cdot \nabla q_h d\bx + \int_{\partial T} \widehat\bu_h^p\cdot \bn \,q\, ds=0,\label{eq3 DG with flux}
 \end{eqnarray}
where $\widehat \bu_h^w,\, \widehat \bu_h^p, \,\widehat\bomega_h$ and $\widehat p_h$ are \textit{numerical fluxes}, 
  which approximate the traces of $\bu_h$, $\bomega_h$ and $p_h$ on the boundary. The fluxes $\widehat\bomega_h$ and $\widehat \bu_h^w$ are related to the $\curl$-$\curl$ operator and are defined by 
 \begin{equation}\label{flux vort}
   \widehat\bomega_h  :=\begin{cases}
   \avg{\bomega_h}+C_{11} \jumpT{\bu},\ & \mathrm{if} \,\,e\in \cE_h,\\ \bomega_h^{+}+C_{11} (\bu_h^{+}\times \bn^{+}- \bu_{\Sigma}),\ & \mathrm{if} \,\,e\in \cF^{\Sigma}_{h},\\ 
    \widehat\bomega_h\times\bn =\bf{0},\ & \mathrm{if} \,\,e\in \cF^{\Gamma}_{h},\\
  \end{cases}\qquad\quad
   \widehat\bu_h^{w}:=\begin{cases}
   \avg{\bu_h}, \ & \mathrm{if} \,\,e\in \cE_h,\\
   \bn\times \bu_{\Sigma},\ & \mathrm{if} \,\,e\in \cF^{\Sigma}_{h},\\ 
   \bu_{h}^{+},\ & \mathrm{if} \,\,e\in \cF^{\Gamma}_{h},\\
  \end{cases}
 \end{equation}
 whereas the fluxes $\widehat \bu_h^p$ and $\widehat p_h$ are associated with the $\vgrad$-$\vdiv$ operator and  
 defined by
  \begin{equation}
   \widehat\bu_h^{p}:=\begin{cases}
   \avg{\bu_h}+D_{11} \jump{p_h},\ & \mathrm{if} \,\,e\in \cE_h,\\ 
   \bu_h^{+}+D_{11} (p_h^{+}\bn^+ +p_{\Sigma} \bn^-),\ & \mathrm{if} \,\,e\in \cF^{\Sigma}_{h},\\ 
   \widehat\bu_h^{p}\cdot \bn=0,\ & \mathrm{if} \,\,e\in \cF^{\Gamma}_{h},\\
  \end{cases}\qquad 
  \label{flux press}
   \widehat p_h  :=\begin{cases}
   \avg{p_h}+ A_{11} \jumpN{\bu_h},\ & \mathrm{if} \,\,e\in \cE_h,\\ 
   p_\Sigma,\ & \mathrm{if} \,\,e\in \cF^{\Sigma}_{h},\\ 
   p^+ + A_{11}\bu^+ \cdot \bn^+,\ & \mathrm{if} \,\,e\in \cF^{\Gamma}_{h}.
  \end{cases}
 \end{equation}
 The parameters $C_{11}$, $A_{11}$ and $D_{11}$ are positive stabilisation parameters, and 
 following \cite{Cockburn_SIAM2002}  we choose 
 \begin{equation}\label{choice param stab C11}
   C_{11}(\bx):=\begin{cases}
   c_{11} \max\{h_{T^{+}}^{-1},\,h_{T^{-}}^{-1}\},\ & \mathrm{if} \,\,\bx\,\in \partial T^{+}\cup \partial T^{-},\\ 
  c_{11}h_{T}^{-1},\ & \mathrm{if} \,\,\bx\,\in \partial T\cap \Sigma,
  \end{cases}
  \end{equation}
 \begin{equation}\label{choice param stab A11}
   A_{11}(\bx):=\begin{cases}
   a_{11} \max\{h_{T^{+}}^{-1},\,h_{T^{-}}^{-1}\},\ & \mathrm{if} \,\,\bx\in \partial T^{+}\cup \partial T^{-},\\ 
  a_{11}h_{T}^{-1},\ & \mathrm{if} \,\,\bx\in \partial T\cap \Gamma,
  \end{cases}
  \end{equation}
    \begin{equation}\label{choice param stab D11}
   D_{11}(\bx):=\begin{cases}
   d_{11} \max\{h_{T^{+}},\,h_{T^{-}}\},\ & \mathrm{if} \,\,\bx\in \partial T^{+}\cup \partial T^{-},\\ 
  d_{11}h_{T},\ & \mathrm{if} \,\,\bx\in \partial T\cap \Sigma,
  \end{cases}
  \end{equation}
where $c_{11},d_{11},a_{11}>0$. Moreover,
we suppose that $C_{11}$ (respectively $D_{11}$ and $A_{11}$)
have a uniform positive bound above and below denoted by
$\overline{C_{11}}$ and $\underline{C_{11}}$ (respectively
$\overline{D_{11}}$, $\underline{D_{11}}$ and $\overline{A_{11}}$,
$\underline{A_{11}}$ ). 
 
We then proceed to integrate by parts  equations \eqref{eq1 DG with flux}
and \eqref{eq3 DG with flux}, and then summing up over all $T\in \cT_h$,
we obtain the following DG scheme:
 Find $(\bu_h,\bomega_h,p_h)\in\widetilde\H_h\times\widetilde\Z_h\times\widetilde\Q_h$
such that 
\begin{alignat}{4}
&a(\bu_h,\bv_h)&+&\;\tilde b_{1}(\bv_h,\bomega_h)&+\;\tilde b_2(\bv_h,p_h)+\;c(\bomega_h,
\bv_h)+j(\bu_h,\bv_h)&=\;
\widetilde F(\bv_h)&,\qquad\forall\bv_h\in\widetilde\H_h,\nonumber\\
&d(\bomega_h,
\btheta_h)&-&\;\tilde b_{1}(\bu_h,\btheta_h)&&=\;\tilde G(\btheta_h)&,\qquad\forall\btheta_h\in\widetilde\Z_h,\label{Mixed DG scheme}\\
&e(p_h,q_h)&-&\;\widetilde b_2(\bu_h,q_h)&&=\;\tilde L (q_h),&\qquad\forall q_h\in\widetilde\Q_h,\nonumber
\end{alignat} 
where the forms $a$, $c$ and $d$ are the
same in \eqref{probform1}, while $\tilde b_1$, $\tilde b_2$, $j$
and $e$ are defined, respectively, by:
\begin{align*} 
 \tilde b_1(\bu_h,\btheta_h)& :=\sqrt{\nu}\sum_{T\in\cT_h}\int_{T}\curl\btheta_h\cdot\bu_h \,d\bx+\sqrt{\nu}\sum_{e\in \cE_{h}\cup\cF_h^{\Gamma}}\int_{e} \avg{\bu_h}\cdot\jumpT{\btheta_h}\,ds  ,\\
\tilde b_2(\bv_h,p_h)&:=-\sum_{T\in\cT_h}\int_{T} p_h\vdiv\bv_h\, d\bx + \sum_{e\in  \cE_{h}\cup\cF_h^{\Gamma}}\int_{e}\avg{p_h}\cdot\jumpN{\bv_h}\,ds,\\
j(\bu_h,\bv_h)&:=\sqrt{\nu}\sum_{e\in\cE_{h}\cup\cF_h^{\Sigma} }\int_{e}C_{11} \jumpT{\bu_h}\cdot\jumpT{\bv_h}\,ds  + \sum_{e\in\cE_{h}\cup\cF_h^{\Gamma} }\int_{e}A_{11} \jumpN{\bu_h}\jumpN{\bv_h}\,ds,\\
e(p_h,q_h)&:=\sum_{e\in \cE_{h}\cup\cF_h^{\Sigma}}\int_{e}D_{11} \jump{p_h}\cdot\jump{q_h}\,ds. 
\end{align*}
In addition, the linear functionals $\widetilde F$, $\widetilde G$
and $\widetilde L$ associated with the source terms are defined as: 
\begin{align*} 
 \widetilde F(\bv_h):=\int_{\O}\ff\cdot\bv_h-\sum_{e\in\cF_h^{\Sigma}}\Big(\int_{e} p_{\Sigma}(\bv\cdot\bn)\,ds-\sqrt{\nu}\int_{e} C_{11} \bu_{\Sigma} \cdot(\bv_{h}\times\bn)\,ds\Big),\\
\widetilde G(\btheta_h):=-\sqrt{\nu}\sum_{e\in\cF_h^{\Sigma}}\int_{e}\bu_{\Sigma}\cdot\btheta_h\, ds\qquad \mathrm{and}\qquad\widetilde L(q_h):=\sum_{e\in\cF_h^{\Sigma}}\int_{e} D_{11}\, (p_{\Sigma}\bn)\cdot\,q_h\bn\,ds.
\end{align*}
By integration by parts and as a consequence of the identity:
\begin{equation*}
 \sum_{T\in\cT_h} \int_{\partial T} \bu\cdot (\btheta\times \bn)\, ds
 = -\sum_{e\in\cE_h} \int_{e} \jumpT{\bu}\cdot \avg{\btheta}\, ds
 +\sum_{e\in\cF_h} \int_{e} \avg{\bu}\cdot \jumpT{\btheta}\, ds,
\end{equation*}
it follows that the form $\tilde b_{1}$ can be written as:
\begin{equation}\label{form b1 after IPP}
 \tilde b_1(\bu_h,\btheta_h)=\sqrt{\nu}\sum_{T\in\cT_h}\int_{T}\curl\bu_h\cdot\btheta_h \,d\bx+\sqrt{\nu}\sum_{e\in \cE_{h}\cup\cF_h^{\Sigma}}\int_{e} \jumpT{\bu_h}\cdot\avg{\btheta_h}\,ds.
 \end{equation}
Similarly, using the following identity: 
\begin{equation*}
 \sum_{T\in\cT_h} \int_{\partial T} p(\bv\cdot\bn)\, ds =
 \sum_{e\in\cE_h} \int_{e} \avg{\bv}\cdot \jump{p}\, ds
 +\sum_{e\in\cF_h} \int_{e} \jumpN{\bv}\avg{p}\, ds,
\end{equation*}
the form  $\tilde b_{2}$ can be recast, after integration by parts, as follows
\begin{equation} \label{form b2 after IPP}
 \tilde b_2(\bv_h,p_h)=\!\!\!\sum_{T\in\cT_h}\int_{T} \bv_h \cdot \nabla p_h\, d\bx
 -\!\!\!\sum_{e\in \cE_{h}\cup\cF_h^{\Sigma}}\int_{e} \avg{\bv_h}\cdot\jump{p_h}\,ds.
\end{equation}
Note that, differently from the conforming method introduced in Section \ref{sec:FE}, the discrete 
velocity generated by scheme \eqref{Mixed DG scheme} is not necessarily divergence-free. 

To simplify the exposition of the analysis of the method, we will write the mixed scheme 
\eqref{Mixed DG scheme} in the following equivalent form:
Find $(\bu_h,\bomega_h,p_h)\in\widetilde\H_h\times\widetilde\Z_h\times\widetilde\Q_h$ such that 
\begin{equation}\label{compact DG scheme}
 \mathcal{A}(\bu_h,\bomega_h,p_h;\bv_h,\btheta_h,q_h)=\mathcal{F}(\bv_h,\btheta_h, q_h),\quad \forall (\bv_h,\btheta_h,q_h)\in\widetilde\H_h\times\widetilde\Z_h\times\widetilde\Q_h,
\end{equation}
where 
\begin{eqnarray*}
 \mathcal{A}(\bu_h,\bomega_h,p_h;\bv_h,\btheta_h,q_h)&:=&a(\bu_h,\bv_h)
 +\tilde b_{1}(\bv_h,\bomega_h)-\tilde b_{1}(\bu_h,\btheta_h)
 +\tilde b_2(\bv_h,p_h)-\widetilde b_2(\bu_h,q_h)\\&&+\,c(\bomega_h,
\bv_h)+j(\bu_h,\bv_h)+d(\bomega_h,\btheta_h)+e(p_h,q_h),
\end{eqnarray*}
and 
\begin{equation*}
 \mathcal{F}(\bv_h,\btheta_h, q_h):=\widetilde F(\bv_h)+\widetilde G(\btheta_h)+\widetilde L(q_h).
\end{equation*}

Let us now show the existence and uniqueness of
solution to formulation \eqref{Mixed DG scheme}. 

\begin{proposition}
 The DG method \eqref{Mixed DG scheme} with the numerical
 fluxes given by \eqref{flux vort}-\eqref{flux press}
 defines a unique approximate solution
 $(\bu_h,\bomega_h,p_h)\in\widetilde\H_h\times\widetilde\Z_h\times\widetilde\Q_h$
 provided that 
 \begin{equation}\label{beta-assumption-DG}
  \frac{2\Vert\bbbeta\Vert_{\infty}^2}{\nu\sigma}< 1.
 \end{equation}
\end{proposition}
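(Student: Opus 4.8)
The statement to prove is that the DG scheme~\eqref{Mixed DG scheme} (equivalently~\eqref{compact DG scheme}) admits a unique solution under the smallness condition~\eqref{beta-assumption-DG}. Since the problem reduces to a square linear system in the finite-dimensional space $\widetilde\H_h\times\widetilde\Z_h\times\widetilde\Q_h$, existence and uniqueness are equivalent to injectivity of the operator, which in turn follows from a coercivity (or inf-sup) estimate for the bilinear form $\mathcal A(\cdot;\cdot)$. The plan is therefore to mimic the strategy used at the continuous level in Lemma~\ref{le1:continuous-problem}: exhibit, for each $(\bu_h,\bomega_h,p_h)$, a suitable test triple $(\bv_h,\btheta_h,q_h)$ against which $\mathcal A$ is bounded below by a positive multiple of a mesh-dependent norm. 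The natural DG energy norm here combines $\|\bu_h\|_{0,\O}$, $\sqrt{\nu}\,\|\curl_h\bomega_h\|$-type contributions through $\|\bomega_h\|$, the pressure $L^2$-norm, and the jump seminorms measured by $j(\cdot,\cdot)$ and $e(\cdot,\cdot)$.

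\emph{Key steps, in order.} First I would test diagonally: take $(\bv_h,\btheta_h,q_h)=(\bu_h,-\bomega_h,p_h)$ so that the off-diagonal forms $\tilde b_1$ and $\tilde b_2$ cancel in pairs (by the antisymmetric sign pattern built into $\mathcal A$), leaving
\begin{equation*}
\mathcal A(\bu_h,\bomega_h,p_h;\bu_h,-\bomega_h,p_h)=\sigma\|\bu_h\|_{0,\O}^2+\|\bomega_h\|_{0,\O}^2+j(\bu_h,\bu_h)+e(p_h,p_h)-c(\bomega_h,\bu_h).
\end{equation*}
The convective term is controlled exactly as in Lemma~\ref{le2:continuous-problem}, using $\|\bomega_h\times\bbbeta\|_{0,\O}\le 2\|\bbbeta\|_{\infty,\O}\|\bomega_h\|_{0,\O}$ and Young's inequality, which absorbs a fraction of $\sigma\|\bu_h\|_{0,\O}^2$ and leaves the factor $\bigl(1-2\|\bbbeta\|_{\infty}^2/(\nu\sigma)\bigr)\|\bomega_h\|_{0,\O}^2$ positive by~\eqref{beta-assumption-DG}. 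This already yields positive control of $\|\bu_h\|_{0,\O}$, $\|\bomega_h\|_{0,\O}$, and the stabilisation seminorms $j(\bu_h,\bu_h)$ and $e(p_h,p_h)$. Second, this diagonal test alone does \emph{not} control $\|\curl_h\bomega_h\|$ or $\|p_h\|_{0,\O}$, so I would augment the test function with a correction of the form $\tilde\bv_h=\bu_h+\hat c\sqrt{\nu}\,\Pi(\curl_h\bomega_h)$ (as in the conforming case) to recover the curl contribution to $\|\bomega_h\|_{\Z}$, and separately invoke the discrete inf-sup property of $\tilde b_2$ on $\widetilde\H_h\times\widetilde\Q_h$ to recover $\|p_h\|_{0,\O}$ from the momentum equation, exactly paralleling Corollary~\ref{coro1}. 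Combining the diagonal estimate with these two augmentations, with $\hat c$ chosen small enough, produces a global inf-sup lower bound and hence injectivity.

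\emph{Main obstacle.} The delicate point is the treatment of the jump terms arising when one integrates by parts the correction $\hat c\sqrt{\nu}\,\curl_h\bomega_h$ against the interior and boundary contributions in $\tilde b_1$: unlike the conforming setting, $\curl_h\bomega_h$ is broken, so the identity used in Lemma~\ref{le1:continuous-problem} picks up face terms of the form $\int_e\jumpT{\bu_h}\cdot\avg{\bomega_h}$ that must be bounded by the stabilisation seminorm $j(\bu_h,\bu_h)^{1/2}$ using trace/inverse inequalities and the scaling of $C_{11}$ in~\eqref{choice param stab C11}. The key quantitative fact making this work is that $C_{11}$ and $A_{11}$ scale like $h^{-1}$ while $D_{11}$ scales like $h$, so the discrete trace inequality $\|v_h\|_{0,e}^2\le C h_e^{-1}\|v_h\|_{0,T}^2$ exactly balances these face integrals against the volume terms. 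Once these jump contributions are absorbed into the coercive part (at the expense of shrinking $\hat c$ and choosing the stabilisation constants $c_{11},a_{11},d_{11}$ large enough), the lower bound closes and uniqueness—hence existence in the finite-dimensional square system—follows.
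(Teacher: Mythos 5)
Your opening step coincides with the paper's proof: reduce to injectivity of the finite-dimensional square system, test diagonally so that the $\tilde b_1$ and $\tilde b_2$ contributions cancel, and absorb the convective term $c(\bomega_h,\bu_h)$ with Young's inequality and assumption \eqref{beta-assumption-DG}. One sign slip, though: in the DG scheme the antisymmetry is already built into the formulation (the second and third equations of \eqref{Mixed DG scheme} carry $-\tilde b_{1}(\bu_h,\btheta_h)$ and $-\tilde b_2(\bu_h,q_h)$), so the correct diagonal test is $(\bv_h,\btheta_h,q_h)=(\bu_h,+\bomega_h,p_h)$. With your choice $(\bu_h,-\bomega_h,p_h)$ the $\tilde b_1$ terms double rather than cancel and $d(\bomega_h,-\bomega_h)=-\Vert\bomega_h\Vert_{0,\O}^2$, so the identity you display is false as written; you imported the sign convention of the conforming Lemma~\ref{le2:continuous-problem}, where both $b_1$ terms enter with a plus sign.

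The genuine gap is your second step. For uniqueness you do not need to control $\Vert p_h\Vert_{0,\O}$ or any broken-curl norm of $\bomega_h$; you only need homogeneous data to force the zero solution. The energy identity already yields $\bu_h=\cero$, $\bomega_h=\cero$, $\jump{p_h}=\cero$ on $\cE_h$ and $p_h=0$ on $\cF_h^{\Sigma}$, and the paper then closes in one line: with $\bu_h$ and $\bomega_h$ zero, the momentum equation reduces (via \eqref{form b2 after IPP} and the vanishing jumps) to $\sum_{T\in\cT_h}\int_{T}\bv_h\cdot\nabla p_h\,d\bx=0$ for all $\bv_h\in\widetilde\H_h$; choosing $\bv_h|_T=\nabla p_h|_T$ (admissible since $\nabla p_h|_T\in\cP_{k-1}(T)^3\subset\cP_{k+1}(T)^3$) gives $\nabla p_h\equiv\cero$, so $p_h$ is piecewise constant, globally constant by the zero jumps, and zero by the condition on $\Sigma$ (or the zero-mean condition). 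Your alternative route---an augmented test function $\bu_h+\hat c\sqrt{\nu}\,\Pi(\curl\bomega_h)$ plus a discrete inf-sup for $\tilde b_2$ on $\widetilde\H_h\times\widetilde\Q_h$---is never carried out: that discrete inf-sup is asserted, not proven (the paper itself never uses one; its pressure error bound relies on the continuous inf-sup with $\bz\in\H^1_0(\O)^3$), and the face-term absorption you flag as the ``main obstacle'' is left as an assertion. Worse, your plan requires choosing $c_{11},a_{11},d_{11}$ ``large enough,'' a hypothesis the proposition does not make: the result holds for arbitrary positive stabilisation constants, as the paper's argument shows. So as written your proof does not close, and even if completed it would prove a weaker statement than claimed.
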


\begin{proof}
Since the problem is linear and finite dimensional,
it suffices to show that if $\ff=\textbf{0}$, $p_{\Sigma}=0$
and $\bu_{\Sigma}=\textbf{0}$, then $(\bu_h,\bomega_h,p_h)=(\textbf{0},\textbf{0},0)$.
To this end, take $\bv_h=\bu_h$, $\btheta_h=\bomega_h$
and $q_h=p_h$ in \eqref{Mixed DG scheme}, summing up the three equations, we obtain:
  \begin{equation*}
  a(\bu_h,\bu_h)+c(\bomega_h,\bu_h)+d(\bomega_h,\bomega_h)+j(\bu_h,\bu_h)+e(p_h,p_h)=0.
 \end{equation*}
It follows that
\begin{eqnarray*}
 \sigma\Vert\bu_h\Vert^{2}_{0,\O}+\Vert\bomega_h\Vert_{0,\O}^{2}+\vert\bu\vert_{j}^{2}+\vert p\vert_{e}^{2} =-\frac{1}{\sqrt{\nu}}\int_{\O}(\bomega_h\times \bbbeta)\cdot\bu_h\,d\bx,
  \end{eqnarray*}
  where we define 
  \begin{equation*}
 \vert\bu\vert_{j}^{2}:=\sqrt{\nu}\sum_{e\in\cE_{h}\cup\cF_h^{\Sigma}}\int_{e}C_{11} \jumpT{\bu}^{2}\,ds+\sum_{e\in\cE_{h}\cup\cF_h^{\Gamma}}\int_{e}A_{11} \jumpN{\bu}^{2}\,ds , \qquad 
\vert p\vert_{e}^{2}:=\sum_{e\in \cE_{h}\cup\cF_h^{\Sigma}}\int_{e}D_{11} \jump{p}^{2}\,ds.
\end{equation*}
Using Young's inequality, we  can assert that 
  \begin{align*}
 \sigma\Vert\bu_h\Vert^{2}_{0,\O}+\Vert\bomega_h\Vert_{0,\O}^{2}+\vert\bu\vert_{j}^{2}+\vert p\vert_{e}^{2}
&\leq \frac{2}{\sqrt{\nu}}\Vert\bbbeta\Vert_{\infty,\O}\Vert\bomega_h\Vert_{0,\O}\Vert\bu_h\Vert_{0,\O}\\&\leq \frac{2}{\nu\sigma}\Vert\bbbeta\Vert_{\infty,\O}^{2}\Vert\bomega_h\Vert_{0,\O}^{2} +\frac{\sigma}{2}\Vert\bu_h\Vert_{0,\O}^{2}.
 \end{align*}
Therefore, in particular we have that:
\begin{equation*}
 \frac{\sigma}{2}\Vert\bu_h\Vert_{0,\O}^{2}+(1-\frac{2\Vert\bbbeta\Vert_{\infty}^2}{\nu\sigma})\Vert\bomega_h\Vert_{0,\O}^{2}+\vert p\vert_{e}^{2}\leq 0,
\end{equation*}
which, owing to the assumption \eqref{beta-assumption-DG}, implies that $\bu_h=\textbf{0}$, $\bomega_h=\textbf{0}$ and $\jump{p_h}=0$ on $\cE_h$, $p_h=0$ on $\cF_h^{\Sigma}$. The first equation in \eqref{Mixed DG scheme} then becomes 
 $$\sum_{T\in\cT_h}\int_{T}\bv_h\cdot\nabla p_h\,d\bx=0,\quad \forall \bv_h\in \widetilde \H_h,$$
 and then $\nabla p_h=\textbf{0}$.
 Employing this result, together with $\jump{p_h}=0$
 on $\cE_h$, $p_h=0$ on $\cF_h^{\Sigma}$ or the fact
 that $p_h$ has zero mean value if $\Sigma$ have zero measure,
 we conclude that $p_h=0$. 
 \end{proof}

\paragraph{A priori error bounds.} 
 Let us now present and discuss a priori error bounds for the proposed DG method.
 The proof involves two steps. The first one consists in establishing an error
 estimate in the natural semi-norm. In the second step, we prove the error
 estimate for the pressure in the $\L^2$-norm. For this, we introduce the
 following semi-norm $\vert\cdot\vert_{\mathcal{A}}$:
  \begin{equation}\label{energy DG norm}
  \vert(\bu,\bomega,p)\vert_{\mathcal{A}}^{2}
  :=\sigma\Vert\bu\Vert_{0,\O}^{2}+\Vert\bomega\Vert_{0,\O}^{2}+\vert\bu\vert_{j}^{2}+\vert p\vert_{e}^{2}.
 \end{equation}
We shall suppose that the exact solution $(\bu,p)$ satisfies the following regularity 
\begin{equation}\label{regularity for DG error}
 \bu\in\HusO^3,\quad \mathrm{and}\quad p\in\H^{s}(\O), \,\,s\geq 1.
\end{equation}
And we will also employ the following norm 
\begin{equation*}
 \Vert(\bu,p)\Vert_{s}:=\sqrt{\nu}\Vert\bu\Vert_{s+1,\O}+\frac{1}{\sqrt{\nu}}\Vert p\Vert_{s,\O}.
\end{equation*}
We define $\be_{\bu}=\bu-\bu_h$, $\be_{\bomega}=\bomega-\bomega_h$
and $e_p=p-p_h$. Let us denote by $\bpi_{\widetilde\H}$ 
(respectively $\bpi_{\widetilde\Z}$ and $\Pi_{\widetilde\Q}$) 
the $\L^{2}$-projection onto $\widetilde\H$ (respectively $\widetilde\Z$ and $\widetilde\Q$), 
and let us split the errors in the following manner 
  $$\be_{\bu} =\bxi_{\bu}+\betta_{\bu},\quad\be_{\bomega}=\bxi_{\bomega}+\betta_{\bomega}\quad \mathrm{and}\quad e_p=\xi_{p}+\eta_{p},$$
  where the numerical and approximation errors are defined by:
  \begin{align*}
  \bxi_{\bu}&=\bu-\bpi_{\widetilde\H}\bu,\quad  \bxi_{\bomega}=\bomega-\bpi_{\widetilde\Z}\,\bomega,\quad \xi_{p}=p-\Pi_{\widetilde\Q} p,\\
  \betta_{\bu}&=\bpi_{\widetilde\H}\bu-\bu_h,\quad  \betta_{\bomega}=\bpi_{\widetilde\Z}\bomega-\bomega_h,\quad \eta_{p}=\Pi_{\widetilde\Q} p-p_h.
 \end{align*}
 We recall the following standard approximation properties (see for instance \cite{Ciarlet}).
 \begin{lemma}\label{L2 projection}
 Let $\bv\in \H^{1+r}(\O)$, $r\geq0$. Let $\Pi$ the projection operator
 such that $\Pi \bv=\bv$ for all $\bv\in\mathcal{P}^{k}(T)$, $k\geq0$. Then we have
  \begin{align}
  \Vert \bv-\Pi \bv\Vert_{0,T}+h_{T}\vert \bv-\Pi \bv\vert_{1,T}&\leq
  C h_{T}^{\min\{r,k\}+1}\Vert\bv\Vert_{r+1,T},\label{eq1: L2 projection}\\
  \Vert \bv-\Pi \bv\Vert_{0,\partial T}&\leq C h_{T}^{\min\{r,k\}+1/2}\Vert\bv\Vert_{r+1,T}\label{eq2: L2 projection}.
  \end{align}
    \end{lemma}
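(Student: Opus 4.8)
The plan is to reduce the estimates to a fixed reference tetrahedron by affine equivalence, invoke the Bramble--Hilbert lemma there, and then transfer the resulting bounds back to $T$ by the usual scaling estimates; the boundary bound is obtained afterwards from a scaled trace inequality.

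First I would fix a reference element $\hat T$ and, for each $T\in\cT_h$, write the affine map $F_T(\hat\bx)=B_T\hat\bx+\bb_T$ with $F_T(\hat T)=T$. Setting $\hat\bv:=\bv\circ F_T$ and letting $\hat\Pi$ be the $\L^2$-projection onto $\cP_k(\hat T)$, one checks that the $\L^2$-projection commutes with the pullback, i.e. $\widehat{\Pi\bv}=\hat\Pi\hat\bv$, so that $\bv-\Pi\bv$ is exactly the pullback of $\hat\bv-\hat\Pi\hat\bv$. Shape-regularity supplies the standard bounds $\Vert B_T\Vert\le C h_T$, $\Vert B_T^{-1}\Vert\le C h_T^{-1}$ and $|\det B_T|\simeq h_T^{3}$.

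Next, on $\hat T$ the operator $I-\hat\Pi$ is bounded on $\H^{m+1}(\hat T)$ and annihilates $\cP_m(\hat T)$ for every $m\le k$. Choosing $m=\min\{r,k\}$, the Bramble--Hilbert lemma yields a constant depending only on $\hat T$ and $m$ with
\begin{equation*}
\Vert\hat\bv-\hat\Pi\hat\bv\Vert_{0,\hat T}+|\hat\bv-\hat\Pi\hat\bv|_{1,\hat T}\le C\,|\hat\bv|_{m+1,\hat T}.
\end{equation*}
Mapping back and inserting the scaling relations $|\bv-\Pi\bv|_{\ell,T}\le C h_T^{3/2-\ell}|\hat\bv-\hat\Pi\hat\bv|_{\ell,\hat T}$ and $|\hat\bv|_{m+1,\hat T}\le C h_T^{m+1-3/2}|\bv|_{m+1,T}$ for $\ell=0,1$ gives $|\bv-\Pi\bv|_{\ell,T}\le C h_T^{m+1-\ell}|\bv|_{m+1,T}$, which is precisely \eqref{eq1: L2 projection} once one bounds $|\bv|_{m+1,T}\le\Vert\bv\Vert_{r+1,T}$.

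Finally, for \eqref{eq2: L2 projection} I would apply the scaled trace inequality
\begin{equation*}
\Vert w\Vert_{0,\partial T}^{2}\le C\bigl(h_T^{-1}\Vert w\Vert_{0,T}^{2}+h_T\,|w|_{1,T}^{2}\bigr)\qquad\forall\,w\in\H^{1}(T)
\end{equation*}
with $w=\bv-\Pi\bv$, and substitute the two bounds already obtained; both terms scale like $h_T^{2\min\{r,k\}+1}\Vert\bv\Vert_{r+1,T}^{2}$, so taking square roots delivers the claimed exponent $\min\{r,k\}+1/2$. There is no genuine difficulty here, the result being entirely classical; the only point requiring attention is the simultaneous truncation $m=\min\{r,k\}$ in the Bramble--Hilbert step, reflecting that one may exploit at most $k+1$ reproduced polynomial degrees and at most $r+1$ available derivatives.
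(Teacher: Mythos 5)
The paper offers no proof of this lemma at all: it is quoted as a collection of standard approximation properties with a pointer to \cite{Ciarlet}, so there is nothing internal to compare against. Your argument is precisely the classical proof that such a citation refers to, and it is sound: the $\L^2$-projection does commute with the affine pullback (the Jacobian of $F_T$ is constant, so the $\L^2$ inner products on $T$ and $\hat T$ differ by a constant factor, orthogonality is preserved, and $\widehat{\Pi\bv}=\hat\Pi\hat\bv$); the operator $I-\hat\Pi$ is bounded from $\H^{m+1}(\hat T)$ to $\H^{1}(\hat T)$ (boundedness of $\hat\Pi$ in $\L^2$ plus norm equivalence on the finite-dimensional space $\cP_k(\hat T)$) and annihilates $\cP_m(\hat T)$ for $m\le k$, so Bramble--Hilbert applies; the scaling exponents $h_T^{3/2-\ell}$ and $h_T^{m+1-3/2}$ are the correct ones in three dimensions and recombine to $h_T^{m+1-\ell}$; and the two terms produced by the scaled trace inequality are balanced, both of order $h_T^{2\min\{r,k\}+1}\Vert\bv\Vert_{r+1,T}^{2}$, which yields \eqref{eq2: L2 projection}.

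The one place where your write-up is not literally complete under the stated hypotheses is the case of non-integer $r$. The lemma allows any real $r\ge0$, and in the paper's application (the regularity assumption \eqref{regularity for DG error} with real $s\ge1$ feeding into Theorem~\ref{th: convergence DG}) fractional exponents are genuinely intended; but your Bramble--Hilbert step needs $m=\min\{r,k\}$ to be an integer for $\cP_m(\hat T)$ to make sense. For fractional $r<k$ one must invoke the fractional-order form of the argument (Dupont--Scott averaged Taylor polynomials, or interpolation between consecutive integer orders), which gives $\Vert\hat\bv-\hat\Pi\hat\bv\Vert_{0,\hat T}+\vert\hat\bv-\hat\Pi\hat\bv\vert_{1,\hat T}\le C\,\vert\hat\bv\vert_{r+1,\hat T}$ with the fractional seminorm on the right; the fractional seminorm obeys the same scaling law, so the rest of your computation goes through unchanged. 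This is a routine repair rather than a structural flaw, but it is worth making explicit, since you yourself singled out the truncation $m=\min\{r,k\}$ as the delicate point.
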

And as a consequence, we have the following result.
\begin{lemma}\label{numerical-error bounds DG}
 Suppose that the analytical solution $(\bu,p)$ of \eqref{NS-cont-vort}
 satisfies \eqref{regularity for DG error} and we set $\bomega=\sqrt{\nu}\curl \bu$. Then, we have:
  \begin{align*}
 \Vert\bxi_{\bu}\Vert_{0,\O}&\leq  C_{a} h^{\min\{s,\,k+1\}+1}\Vert(\bu,0)\Vert_{s},\\
    \Vert\bxi_{\bomega}\Vert_{0,\O}&\leq  C_{d} h^{\min\{s,\,k\}+1}\Vert(\bu,0)\Vert_{s},\\
    \vert\bxi_{\bu}\vert_{j}&\leq C_{j} h^{\min\{s,\,k+1\}}\Vert(\bu,0)\Vert_{s},\\
 \vert\xi_{p}\vert_{e}&\leq  C_{e} h^{\min\{s,\,k\}+1}\Vert(\mathbf{0},p)\Vert_{s},
   \end{align*}
where $C_{a}$, $C_{d}$, $C_{j}$ and $C_{e}$ are positive constants independent of the meshsize.
\end{lemma}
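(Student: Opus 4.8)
The plan is to prove each of the four estimates by reducing it to a single element or face, applying the local $\L^2$-projection bounds of Lemma~\ref{L2 projection}, and then summing over the partition while accounting for the mesh-dependent scaling of the stabilisation parameters $C_{11}$, $A_{11}$ and $D_{11}$. Throughout I would use $\bomega=\sqrt\nu\curl\bu$ to transfer the regularity of $\bu$ to $\bomega$, noting $\Vert\bomega\Vert_{s,\O}\le\sqrt\nu\Vert\bu\Vert_{s+1,\O}=\Vert(\bu,0)\Vert_s$ and $\frac{1}{\sqrt\nu}\Vert p\Vert_{s,\O}=\Vert(\mathbf 0,p)\Vert_s$, so that all constants end up depending only on $\nu$, $\sigma$ and the shape-regularity, never on $h$.

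For the two volumetric bounds I would argue as follows. Since $\bpi_{\widetilde\H}$ is the $\L^2$-projection onto piecewise $\cP_{k+1}$ polynomials and $\bu\in\HusO^3$, applying \eqref{eq1: L2 projection} elementwise with polynomial degree $k+1$ gives $\Vert\bxi_\bu\Vert_{0,T}\le Ch_T^{\min\{s,k+1\}+1}\Vert\bu\Vert_{s+1,T}$; squaring, summing over $T\in\cT_h$, using $h_T\le h$ and $\sum_T\Vert\bu\Vert_{s+1,T}^2=\Vert\bu\Vert_{s+1,\O}^2$ yields the first estimate. For $\bxi_\bomega$, the projection $\bpi_{\widetilde\Z}$ lands in piecewise $\cP_k$; applying \eqref{eq1: L2 projection} with degree $k$ to $\bomega\in\H^s(\O)^3$ and summing produces a bound whose $h$-power is governed by $\min\{\text{regularity},k\}$, giving the stated $h^{\min\{s,k\}+1}\Vert(\bu,0)\Vert_s$.

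For the two face seminorms I would combine the trace estimate \eqref{eq2: L2 projection} with the parameter scalings. For $\vert\bxi_\bu\vert_j$ the bounds \eqref{choice param stab C11}--\eqref{choice param stab A11} give $C_{11},A_{11}\le Ch_e^{-1}$; inserting $\Vert\bxi_\bu\Vert_{0,\partial T}\le Ch_T^{\min\{s,k+1\}+1/2}\Vert\bu\Vert_{s+1,T}$ and using shape-regularity ($h_e\sim h_T$) to cancel the $h_e^{-1}$ against one half-power leaves each face contribution bounded by $Ch_T^{2\min\{s,k+1\}}\Vert\bu\Vert_{s+1,T}^2$; summing over $\cE_h\cup\cF_h^\Sigma$ and $\cE_h\cup\cF_h^\Gamma$ with the bounded overlap of faces gives $\vert\bxi_\bu\vert_j\le C_jh^{\min\{s,k+1\}}\Vert(\bu,0)\Vert_s$. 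The pressure seminorm $\vert\xi_p\vert_e$ is analogous, the only difference being that $D_{11}\le Ch_e$ by \eqref{choice param stab D11}, so this extra power of $h_e$ \emph{improves} the rate; combined with the trace bound for $p\in\H^s(\O)$ projected onto $\cP_k$ one arrives at $\vert\xi_p\vert_e\le C_eh^{\min\{s,k\}+1}\Vert(\mathbf 0,p)\Vert_s$.

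The elementwise application of Lemma~\ref{L2 projection} and the subsequent summation are routine. The step I expect to require the most care is the bookkeeping of the powers of $h$ in the two face seminorms: one must correctly combine the half-integer exponent coming from the trace estimate \eqref{eq2: L2 projection} with the $\mp1$ exponent carried by the stabilisation parameters, and then use shape-regularity to replace $h_e$ by $h_T$ so that the surviving exponent is exactly the one claimed. A related subtlety is matching the available Sobolev regularity of $\bomega$ and $p$ (both only in $\H^s$) to the polynomial degree $k$ so that the reported $\min\{s,k\}+1$ rate is attained; this is where the transfer $\bomega=\sqrt\nu\curl\bu$ and the scaled norm $\Vert(\cdot,\cdot)\Vert_s$ do the essential work.
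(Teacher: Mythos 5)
Your proposal is correct and follows essentially the same route as the paper's proof: the volumetric bounds come from the elementwise $\L^2$-projection estimate \eqref{eq1: L2 projection} applied with degree $k+1$ for the velocity and $k$ for the vorticity, and the face seminorms are handled exactly as in the paper by combining the trace estimate \eqref{eq2: L2 projection} with the scalings \eqref{choice param stab C11}--\eqref{choice param stab D11} (the $h^{-1}$ factors in $C_{11},A_{11}$ absorbing one full power of $h_T$, the $h$ factor in $D_{11}$ granting one extra), then summing over elements with shape-regularity. The bookkeeping of exponents you flag as delicate is precisely the computation carried out in the paper, including the use of $\bomega=\sqrt{\nu}\curl\bu$ and the scaled norm $\Vert(\cdot,\cdot)\Vert_s$ to express all bounds in the stated form.
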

\begin{proof}
The two first estimates are a simple consequence
of \eqref{eq1: L2 projection}. Next we can state that 
\begin{equation*}
 \vert\bxi_{\bu}\vert_{j} \leq 2\big(\sqrt{\nu}\sum_{T\in\cT_h}\underline{C_{11}} \Vert\bxi_{\bu}\Vert_{0,\partial T}^{2}\big)^{1/2}+2\big(\sum_{T\in\cT_h}\underline{A_{11}} \Vert\bxi_{\bu}\Vert_{0,\partial T}^{2}\big)^{1/2}
 \end{equation*}
 and 
 \begin{equation*}
 \vert\xi_{p}\vert_{e} \leq 2\big(\sum_{T\in\cT_h}\underline{D_{11}} \Vert\xi_{p}\Vert_{0,\partial T}^{2}\big)^{1/2}.
\end{equation*}
Recalling that $C_{11}$, $A_{11}$ and $D_{11}$ are as in \eqref{choice param stab C11}-\eqref{choice param stab D11} and using \eqref{eq2: L2 projection}, we end up with the bound
 \begin{equation*}
 \vert\bxi_{\bu}\vert_{j} \leq C_j\big(\sum_{T\in\cT_h}h_T^{2\min \{s,k+1\}}
 \Vert{\bu}\Vert_{s+1,T}^{2}\big)^{1/2}
 \leq C_j h^{\min\{s,k+1\}}\Vert{\bu}\Vert_{s+1,\O}, 
\end{equation*}
and similarly we obtain
\begin{equation*}
  \vert \xi_{p}\vert_{e} \leq C_e\big(\sum_{T\in\cT_h}h_T^{2\min \{s,k\}+2}\Vert{p}\Vert_{s,T}^{2}\big)^{1/2}
  \leq C_e h^{\min\{s,k\}+1}\Vert{p}\Vert_{s,\O},
\end{equation*}
where $C_j$ (respectively $C_e$) depends on $c_{11} $, $a_{11}$ and $\nu$ (respectively $d_{11}$).
\end{proof}

\noindent We next concentrate on obtaining bounds for the forms $\tilde b_{1}$, $\tilde b_{2}$, $c$ and $j$. 

\begin{lemma}\label{estimates bilinear forms}
Let us assume that the solution of \eqref{NS-cont-vort}
  satisfies \eqref{regularity for DG error}. Then one has 
    \begin{align*}
    \vert\tilde b_{1}(\bxi_{\bu},\btheta_h)\vert &\leq  C_{b_{1}} h^{\min\{s,k+1\}}\Vert(\bu,0)\Vert_{s}\Vert\btheta_h\Vert_{0,\O} ,\quad \quad\forall\btheta_h\in \widetilde Z_h,\\
      \vert\tilde b_{1}(\bv_h,\bxi_{\bomega})\vert &\leq  C_{b_{1}} h^{\min\{s,k\}+1}\Vert(\bu,0)\Vert_{s}\vert\bv_h\vert_{j},\qquad\,\quad \forall\bv_h\in \widetilde H_h,\\
       \vert\tilde b_{2}(\bv_h,\xi_{p})\vert &\leq  C_{b_{2}} h^{\min\{s,k\}+1}\Vert(\mathbf{0},p)\Vert_{s}\vert\bv_h\vert_{j}, \qquad\,\,\quad\forall\bv_h\in \widetilde H_h,\\
        \vert\tilde b_{2}(\bxi_{\bu}, q_{h})\vert &\leq  C_{b_{2}} h^{\min\{s,k+1\}}\Vert(\bu,0)\Vert_{s}\vert q_h\vert_{e},\qquad\,\,\,\quad \forall q_h\in \widetilde Q_h,\\
        \vert j(\bxi_{\bu},\bv_h)\vert &\leq C_{j} h^{\min\{s,\,k+1\}}\Vert(\bu,0)\Vert_{s} \vert \bv_h\vert_{j} ,\quad\,\,\,\,\qquad\forall\bv_h\in \widetilde H_h,
        \\ \vert e(\xi_{p},q_h)\vert& \leq  C_{e} h^{\min\{s,\,k\}+1}\Vert(\mathbf{0},p)\Vert_{s}\vert q_h\vert_{e},\qquad\,\,\,\,\,\,\quad\forall q_h\in \widetilde Q_h,\\
     \vert c(\bxi_{\bomega},\bv_h)\vert &\leq C_{\bomega} h^{\min\{s,\,k\}+1}\Vert(\bu,0)\Vert_{s} \Vert \bv_h\Vert_{0,\O},\quad\,\,\quad \forall \bv_h\in \widetilde \H_h,
  \end{align*}
where $C_{b_{1}}$, $C_{b_{2}}$, $C_j$, $C_e$ and $C_{\bomega}$
are positive constants independent of the meshsize. 
\end{lemma}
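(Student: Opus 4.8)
The plan is to bound each of the seven bilinear/linear forms appearing in the list by first isolating their volumetric and face contributions, then inserting the approximation estimates from Lemma~\ref{L2 projection} (via \eqref{eq1: L2 projection}--\eqref{eq2: L2 projection}) together with the already-proved bounds of Lemma~\ref{numerical-error bounds DG}. The guiding principle is that every term is either an integral over elements $T$, which is controlled by the Cauchy--Schwarz inequality in $\L^2(\O)$ followed by \eqref{eq1: L2 projection}, or an integral over faces $e$, which is absorbed into one of the jump seminorms $|\cdot|_j$ or $|\cdot|_e$ by completing the weighted face integral and invoking \eqref{eq2: L2 projection}. A recurring device will be to multiply and divide by the stabilisation weight (powers of $h_e$ coming from $C_{11},A_{11},D_{11}$ as in \eqref{choice param stab C11}--\eqref{choice param stab D11}) so that the face term factors as (weighted norm of the projection error) times (the appropriate seminorm of the test function).

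\emph{First} I would treat $c(\bxi_{\bomega},\bv_h)$ and the two volumetric pieces of $\tilde b_1$, since these are purely element-based: using $|c(\bxi_\bomega,\bv_h)|\le \tfrac{2}{\sqrt\nu}\|\bbbeta\|_{\infty,\O}\|\bxi_\bomega\|_{0,\O}\|\bv_h\|_{0,\O}$ and then the second bound of Lemma~\ref{numerical-error bounds DG} gives the stated rate $h^{\min\{s,k\}+1}$ directly. \emph{Next} I would handle the face contributions. For $\tilde b_1(\bxi_\bu,\btheta_h)$, written in the form \eqref{form b1 after IPP}, the volume term pairs $\curl\bxi_\bu$ against $\btheta_h$ (losing one order, hence $h^{\min\{s,k+1\}}$), while the face term $\sum_e\int_e\jumpT{\bxi_\bu}\cdot\avg{\btheta_h}\,ds$ is estimated by Cauchy--Schwarz, using \eqref{eq2: L2 projection} on $\jumpT{\bxi_\bu}$ and a discrete trace inequality to pass from $\avg{\btheta_h}$ on faces back to $\|\btheta_h\|_{0,\O}$. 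For $\tilde b_1(\bv_h,\bxi_\bomega)$ and $\tilde b_2(\bv_h,\xi_p)$ the roles reverse: the face terms are matched against $\jumpT{\bv_h}$ or $\jumpN{\bv_h}$, so one factors out the weight to recognise $|\bv_h|_j$, and the projection-error factor is bounded by \eqref{eq2: L2 projection}.

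\emph{For} the genuinely seminorm-to-seminorm bounds $j(\bxi_\bu,\bv_h)$ and $e(\xi_p,q_h)$, the argument is cleanest: both are weighted face inner products, so a single Cauchy--Schwarz in the weighted face $\L^2$ inner product yields $|j(\bxi_\bu,\bv_h)|\le |\bxi_\bu|_j\,|\bv_h|_j$ and $|e(\xi_p,q_h)|\le|\xi_p|_e\,|q_h|_e$, after which the already-established bounds for $|\bxi_\bu|_j$ and $|\xi_p|_e$ from Lemma~\ref{numerical-error bounds DG} finish the job. The term $\tilde b_2(\bxi_\bu,q_h)$ is handled by the same completing-the-weight trick to expose $|q_h|_e$.

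\emph{The hard part} will be the bookkeeping of the correct powers of $h$: the two forms $\tilde b_1$ and $\tilde b_2$ must be estimated from their post-integration-by-parts representations \eqref{form b1 after IPP} and \eqref{form b2 after IPP} so that the derivative falls on the projection error rather than on the discrete test function, which is what distinguishes the rate $h^{\min\{s,k+1\}}$ (velocity, measured against a plain $\L^2$ norm) from $h^{\min\{s,k\}+1}$ (measured against a seminorm that already carries an inverse power of $h_e$). One must be careful that the weight exponents in \eqref{choice param stab C11}--\eqref{choice param stab D11} combine with \eqref{eq2: L2 projection} to give exactly an extra half-power on each side, and that the polynomial degrees ($k+1$ for velocity, $k$ for vorticity and pressure) are tracked consistently throughout. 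Beyond this exponent-accounting, every individual estimate is routine Cauchy--Schwarz plus a discrete trace inequality, so no further obstacle is anticipated.
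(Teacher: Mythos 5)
Your handling of $c(\bxi_{\bomega},\bv_h)$, $j(\bxi_{\bu},\bv_h)$, $e(\xi_p,q_h)$ and of the first bound $\tilde b_{1}(\bxi_{\bu},\btheta_h)$ is correct and essentially matches the paper (for that last term the paper is slightly slicker: it keeps the representation in which the curl acts on $\btheta_h$, so the volume integral vanishes by orthogonality of the $\L^2$-projection, whereas you pay a derivative on $\bxi_{\bu}$; both give $h^{\min\{s,k+1\}}$). The genuine gap concerns the three remaining bounds, precisely those whose right-hand sides contain \emph{only} the jump seminorms $\vert\bv_h\vert_j$ or $\vert q_h\vert_e$ of the discrete test function. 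A jump seminorm gives no control over element interiors (a globally smooth polynomial has zero jumps but arbitrary $\L^2$-norm), so any nonzero volume contribution surviving in $\tilde b_{1}(\bv_h,\bxi_{\bomega})$, $\tilde b_{2}(\bv_h,\xi_p)$ or $\tilde b_{2}(\bxi_{\bu},q_h)$ is fatal: after Cauchy--Schwarz it produces $\Vert\bv_h\Vert_{0,\O}$ or $\Vert q_h\Vert_{0,\O}$, which cannot be absorbed into the claimed estimates. Your stated plan --- ``integrals over elements are controlled by Cauchy--Schwarz followed by \eqref{eq1: L2 projection}'' --- therefore cannot prove these three inequalities.

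The missing idea, which is the crux of the paper's proof, is that these volume terms must be made to \emph{vanish identically}: one chooses for each pairing the representation of $\tilde b_1$, $\tilde b_2$ in which the differential operator falls on the \emph{discrete} function, and then invokes the elementwise orthogonality of the $\L^{2}$-projection errors. Concretely, $\curl\bv_h\in\cP_{k}(T)^3$ pairs with $\bxi_{\bomega}\perp\cP_{k}(T)^3$, $\vdiv\bv_h\in\cP_{k}(T)$ with $\xi_p\perp\cP_{k}(T)$, and $\nabla q_h\in\cP_{k-1}(T)^3$ with $\bxi_{\bu}\perp\cP_{k+1}(T)^3$, so only face terms survive, and those are indeed absorbed into the weighted seminorms exactly as you describe. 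Your ``hard part'' paragraph prescribes the opposite convention (``the derivative falls on the projection error rather than on the discrete test function''), and for $\tilde b_{2}(\bv_h,\xi_p)$ this is not merely suboptimal but breaks the proof: using \eqref{form b2 after IPP} leaves the nonvanishing volume term $\sum_{T}\int_{T}\bv_h\cdot\nabla\xi_p\,d\bx$, and no trace or inverse inequality converts the resulting factor $\Vert\bv_h\Vert_{0,\O}$ into $\vert\bv_h\vert_j$; the correct choice there is the original (divergence) form, where orthogonality kills the volume part. Note also that your prescription is internally inconsistent: \eqref{form b1 after IPP}, which you invoke under the same slogan, actually places the curl on the discrete function $\bv_h$ when applied to $\tilde b_{1}(\bv_h,\bxi_{\bomega})$ --- which is why the paper's use of it succeeds. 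With the orthogonality step inserted in these three estimates, the rest of your face-term bookkeeping goes through as in the paper (which, for the $\tilde b_2$ bounds, simply defers to Section 3.3 of the Cockburn--Kanschat--Sch\"otzau--Schwab reference, where the same orthogonality argument is used).
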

\begin{proof}
 The bounds associated with the form $\tilde b_{2}$ can be proved exactly with the same arguments as
 \cite[Section 3.3]{Cockburn_SIAM2002}. Let us now deal with the term
 $\tilde b_{1}$. We observe that due to the properties of the $\L^2$-projection, we can write 
 \begin{equation*}
  \int_{T} \bxi_{\bu}\cdot \curl \btheta_{h}\, d\bx =0.
 \end{equation*}
Using Cauchy-Schwarz's inequality, we then readily obtain
\begin{equation*}
  \vert\tilde b_{1}(\bxi_{\bu},\btheta_h)\vert \leq  C\Big (\sum_{T\in\cT_h} \nu h_T^{-1}\Vert\bxi_{\bu}\Vert^{2}_{0,\partial T} \Big)^{1/2} \Big (\sum_{T\in\cT_h} h_T\Vert\btheta_{h}\Vert^{2}_{0,\partial T} \Big)^{1/2},
\end{equation*}
and  the desired estimate follows from the inverse inequality and
Lemma \ref{numerical-error bounds DG}.

\noindent Similarly, using \eqref{form b1 after IPP}
and again the properties of the $\L^{2}$-projections we have
\begin{equation*}
\int_{T} \curl \bv_h \cdot \bxi_{\bomega}\, d\bx=0,
\end{equation*}
thus 
 \begin{align*}
  \vert\tilde b_{1}(\bv_h,\bxi_{\bomega})\vert
  &\leq  C\Big (\sum_{T\in\cT_h} \frac{\nu}{\underline{C_{11}}}\Vert\bxi_{\bomega}\Vert^{2}_{0,\partial T} \Big)^{1/2} \Big(\sqrt{\nu}\sum_{e\in\cE_{h}}\int_{e} C_{11} \jumpT{\bv_h}^{2}\,ds +\sqrt{\nu} \sum_{e\in\cF_h^{\Sigma}}\int_{e}C_{11}\bv_h^{2} \,ds \Big )^{1/2} \\
  & \leq \Big (\sum_{T\in\cT_h} \frac{\nu}{\underline{C_{11}}}\Vert\bxi_{\bomega}\Vert^{2}_{0,\partial T} \Big)^{1/2} \vert\bv_{h}\vert_{j}
\end{align*}
and then, we simply have to use \eqref{choice param stab C11} and Lemma \ref{L2 projection}. 

The estimates for the forms $j$ and $e$ are obtained in a similar way.
Indeed, using once again Cauchy-Schwarz's inequality and Lemma~\ref{numerical-error bounds DG}, it follows that  
\begin{align*}
 \vert j(\bxi_{u},\bv_h)\vert& = \Big{\vert}\sqrt{\nu}\sum_{e\in\cE_{h}\cup\cF_h^{\Sigma}}\int_{e}C_{11} \jumpT{\bxi_{\bu}}\cdot\jumpT{\bv_h}\,ds+ \sum_{e\in\cE_{h}\cup\cF_h^{\Gamma}}\int_{e}A_{11} \jumpN{\bxi_{\bu}}\jumpN{\bv_h}\,ds \Big{\vert}  \\
 &\leq  \vert \bv_h\vert_j \vert \bxi_{\bu}\vert_j\leq C_{j} h^{\min\{s,\,k+1\}}\Vert(\bu,0)\Vert_{s} \vert \bv_h\vert_{j},
\end{align*}
and proceeding analogously as before, we get 
$$
 \vert e(\xi_{p},q_h)\vert \leq  \vert\xi_{p} \vert_{e}\vert q_h \vert_{e}\leq  C_{e} h^{\min\{s,\,k\}+1}\Vert(\mathbf{0},p)\Vert_{s}\vert q_h\vert_{e}.
$$
Finally, concerning the term $c(\bxi_{\bomega},\bv_h)$ we can assert that 
\begin{equation*}
 c(\bxi_{\bomega},\bv_h)\leq \frac{2}{\sqrt{\nu}} \Vert\bbbeta\Vert_{\infty}\Vert\bxi_{\bomega}\Vert_{0,\O}\Vert \bv_h\Vert_{0,\O},
\end{equation*}
and exploiting the previous bounds, we obtain the corresponding estimate with $C_{\bomega}=\frac{2C_{d}}{\sqrt{\nu}}\Vert\bbbeta\Vert_{\infty}$.
\end{proof}

\begin{theorem}\label{th: convergence DG}
Let the exact solution $(\bu,p)$ satisfy the regularity assumption
in \eqref{regularity for DG error}. Then, the mixed DG approximation
$(\bu_h,\bomega_h,p_h)$ defined by \eqref{Mixed DG scheme}, satisfies the following a priori error bounds
\begin{align}\label{energy error bound DG}
 \vert(\be_{\bu} ,\be_{\bomega},e_p)\vert_{\mathcal{A}} & \leq C_{\mathcal{A}} h^{\min\{s,\,k+1\}}\Vert(\bu,p)\Vert_{s},\\
\label{pressue error bound DG}
\Vert e_{p}\Vert_{0,\O} & \leq  C h^{\min\{s,\,k+1\}}\Vert(\bu,p)\Vert_{s},
\end{align}
where $C_{\mathcal{A}}$ and $C$ are positive constants independent of the meshsize. 
 \end{theorem}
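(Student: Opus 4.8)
The plan is to follow the standard two-step strategy for mixed DG error analysis, exploiting the Galerkin orthogonality that comes from consistency of the scheme together with the coercivity-type bound already established in the well-posedness proof. First I would prove consistency: since the numerical fluxes are designed to be consistent with the exact traces of the smooth solution $(\bu,\bomega,p)$ (with $\bomega=\sqrt{\nu}\curl\bu$), the exact solution satisfies the same variational identity \eqref{compact DG scheme}, so that
\begin{equation*}
\mathcal{A}(\be_{\bu},\be_{\bomega},e_p;\bv_h,\btheta_h,q_h)=0\qquad\forall(\bv_h,\btheta_h,q_h)\in\widetilde\H_h\times\widetilde\Z_h\times\widetilde\Q_h.
\end{equation*}
Splitting each error as $\be_{\bu}=\bxi_{\bu}+\betta_{\bu}$, and so on, this orthogonality lets me transfer the problem onto the fully discrete error components $(\betta_{\bu},\betta_{\bomega},\eta_p)$.

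For the energy estimate \eqref{energy error bound DG}, I would test the scheme with the discrete error itself. Taking $\bv_h=\betta_{\bu}$, $\btheta_h=-\betta_{\bomega}$, $q_h=\eta_p$ and summing, the off-diagonal forms $\tilde b_1$ and $\tilde b_2$ cancel exactly (this is the same algebraic cancellation used in the proof of the Proposition establishing uniqueness), leaving
\begin{equation*}
\mathcal{A}(\betta_{\bu},\betta_{\bomega},\eta_p;\betta_{\bu},-\betta_{\bomega},\eta_p)
=\sigma\Vert\betta_{\bu}\Vert_{0,\O}^2+\Vert\betta_{\bomega}\Vert_{0,\O}^2+\vert\betta_{\bu}\vert_j^2+\vert\eta_p\vert_e^2+c(\betta_{\bomega},\betta_{\bu}),
\end{equation*}
which by assumption \eqref{beta-assumption-DG} and Young's inequality is bounded below by $C\,\vert(\betta_{\bu},\betta_{\bomega},\eta_p)\vert_{\mathcal{A}}^2$. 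By Galerkin orthogonality the left-hand side equals $-\mathcal{A}(\bxi_{\bu},\bxi_{\bomega},\xi_p;\betta_{\bu},-\betta_{\bomega},\eta_p)$, and every term on the right is controlled using the individual bounds of Lemma~\ref{estimates bilinear forms} together with Lemma~\ref{numerical-error bounds DG} for the diagonal forms $a$, $d$, $e$. This produces $\vert(\betta_{\bu},\betta_{\bomega},\eta_p)\vert_{\mathcal{A}}\le C h^{\min\{s,k+1\}}\Vert(\bu,p)\Vert_s$, and a triangle inequality with the projection bounds of Lemma~\ref{numerical-error bounds DG} closes \eqref{energy error bound DG}.

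For the $\L^2$ pressure estimate \eqref{pressue error bound DG}, the energy norm only controls the jumps $\vert\eta_p\vert_e$, not $\Vert\eta_p\Vert_{0,\O}$, so a separate argument is needed. The natural tool is a discrete inf-sup condition for $\tilde b_2$ on $\widetilde\H_h\times\widetilde\Q_h$: I would select $\bv_h$ realising the supremum, use the first equation of \eqref{Mixed DG scheme} to rewrite $\tilde b_2(\bv_h,\eta_p)$ in terms of the already-estimated quantities $a(\be_{\bu},\bv_h)$, $\tilde b_1(\bv_h,\be_{\bomega})$, $c(\be_{\bomega},\bv_h)$, $j(\be_{\bu},\bv_h)$, and then bound each against the energy error. \emph{The hard part will be} establishing (or invoking) this discrete inf-sup stability for the DG pressure pairing and controlling the jump stabilisation terms $j(\be_{\bu},\bv_h)$ uniformly — this is precisely the step where the analysis in the spirit of \cite{Cockburn_SIAM2002} is essential, and where the mesh-dependent scaling of $C_{11}$, $A_{11}$, $D_{11}$ from \eqref{choice param stab C11}--\eqref{choice param stab D11} must be tracked carefully to keep the constants independent of $h$.
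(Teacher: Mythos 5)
Your treatment of the energy bound \eqref{energy error bound DG} is, up to one sign error, exactly the paper's argument: split $\be_{\bu}=\bxi_{\bu}+\betta_{\bu}$ (etc.), bound the projection part by Lemma~\ref{numerical-error bounds DG}, use Galerkin orthogonality to express the discrete part through the terms covered by Lemma~\ref{estimates bilinear forms}, absorb $c(\betta_{\bomega},\betta_{\bu})$ by Young's inequality and assumption \eqref{beta-assumption-DG}, and finish with the triangle inequality. The sign error: in the DG form $\mathcal{A}$ the skew-symmetry is already built in (it contains $+\tilde b_{1}(\bv_h,\bomega_h)-\tilde b_{1}(\bu_h,\btheta_h)$, $+\,d(\bomega_h,\btheta_h)$, and likewise for $\tilde b_2$), so the cancellation you invoke occurs for the \emph{plain} diagonal test $(\betta_{\bu},\betta_{\bomega},\eta_p)$ --- this is what the uniqueness Proposition actually does. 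With your choice $\btheta_h=-\betta_{\bomega}$ the two $\tilde b_1$-contributions add up to $2\tilde b_{1}(\betta_{\bu},\betta_{\bomega})$ instead of cancelling, and the $d$-term enters as $-\Vert\betta_{\bomega}\Vert_{0,\O}^2$, destroying coercivity; the negate-the-vorticity trick belongs to the conforming setting of Lemma~\ref{le2:continuous-problem}, where the reduced form carries $+b_1,+b_1,-d$. Since the identity you then display is precisely the one valid for the unnegated test, this is a repairable slip rather than a conceptual flaw.

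The pressure bound \eqref{pressue error bound DG} is where the proposal has a genuine gap. Your argument hinges on a discrete inf-sup condition for $\tilde b_2$ on $\widetilde\H_h\times\widetilde\Q_h$, which you explicitly defer (``the hard part'') and never establish; as written, the second estimate is therefore not proved. Note also that the norm in which such an inf-sup would have to hold matters: the supremising $\bv_h$ must be controlled in a broken $\H^1$-type norm including its jumps, since otherwise the terms $j(\be_{\bu},\bv_h)$ and $\tilde b_{1}(\bv_h,\be_{\bomega})$ (the latter involving face averages of $\be_{\bomega}$) cannot be bounded uniformly in $h$. The paper avoids this ingredient altogether: since $e_p\in\L^2_0(\O)$, it takes a \emph{continuous} field $\bz\in\H_0^1(\O)^3$ with
\begin{equation*}
-\int_{\O}e_p\,\vdiv\bz\,d\bx\;\ge\;\kappa\Vert e_p\Vert_{0,\O}^2,
\qquad \Vert\bz\Vert_{1,\O}\le\Vert e_p\Vert_{0,\O},
\end{equation*}
as in \eqref{inf sup with ep} (Girault--Raviart), for which $j(\be_{\bu},\bz)=0$ because $\bz$ has no jumps; it then writes $\tilde b_{2}(\bz,e_p)=\mathcal{A}(\be_{\bu},\be_{\bomega},e_p;\bz,\cero,0)-a(\be_{\bu},\bz)-\tilde b_{1}(\bz,\be_{\bomega})-c(\be_{\bomega},\bz)$, replaces $\bz$ by $\bxi_{\bz}=\bz-\bpi_{\widetilde\H}\bz$ inside the $\mathcal{A}$-term via Galerkin orthogonality, and bounds every resulting term with the estimates of Lemma~\ref{estimates bilinear forms} and the already-proven energy bound. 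If you insist on the discrete inf-sup route, be aware that its standard proof is itself a Fortin-type argument built on exactly this continuous inf-sup plus projection estimates, so your plan would end up reproducing the paper's argument rather than bypassing it; to complete your proof you must either supply that proof or switch to the duality-with-$\bz$ argument above.
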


\begin{proof}
We begin with the estimate \eqref{energy error bound DG}. 
 A direct application of the definition of the $\mathcal{A}-$seminorm in combination with Lemma~\ref{numerical-error bounds DG} gives
\begin{equation}\label{energy numerical-error bound DG}
 \vert(\bxi_{\bu} ,\bxi_{\bomega},\xi_p)\vert_{\mathcal{A}} \leq C h^{\min\{s,\,k+1\}}\Vert(\bu,p)\Vert_{s}.
\end{equation}
Concentrating on the projection of the errors, we can exploit the Galerkin orthogonality to obtain 
\begin{align}\label{projection error equation}
 \vert(\betta_{\bu} ,\betta_{\bomega},\eta_p)\vert_{\mathcal{A}}^{2}&=\mathcal{A}(\betta_{\bu},\betta_{\bomega},\eta_p;\betta_{\bu},\betta_{\bomega},\eta_p)-c(\betta_{\bomega},\betta_{\bu})\\&=\mathcal{A}(\betta_{\bu},\betta_{\bomega},\eta_p;\bxi_{\bu},\bxi_{\bomega},\xi_p)-c(\betta_{\bomega},\betta_{\bu})\nonumber.
\end{align}
Due to the orthogonality of the $\L^{2}$-projections, we have that $a(\bxi_{\bu},\betta_{\bu})=0$ and $d(\bxi_{\bomega},\betta_{\bomega})=0$. Then, from the the definition of the form $\mathcal{A}$ it follows that 
\begin{equation*}
 \mathcal{A}(\betta_{\bu},\betta_{\bomega},\eta_p;\bxi_{\bu},\bxi_{\bomega},\xi_p)=\tilde b_{1}(\bxi_{\bu},\betta_{\bomega})-\tilde b_{1}(\betta_{\bu},\bxi_{\bomega}) +\tilde b_2(\bxi_{\bu},\eta_{p})-\widetilde b_2(\betta_{\bu},\xi_{p})+c(\bxi_{\bomega},\betta_{\bu})+j(\bxi_{\bu},\betta_{\bu})+e(\xi_{p},\eta_{p}).
\end{equation*}
Note that all these terms can be controlled using Lemma \ref{estimates bilinear forms}. Indeed we have 
\begin{align}\label{estim A eta xi}
 \mathcal{A}(\betta_{\bu},\betta_{\bomega},\eta_p;\bxi_{\bu},\bxi_{\bomega},\xi_p)&\leq
 \Big(C_{b_{1}}\Vert\betta_{\bomega}\Vert_{0,\O}+(C_{b_{1}}+C_{b_{2}}+C_{j})\vert\betta_{\bu}\vert_{j}
 +(C_{b_{2}}+C_{e})\vert\eta_{p}\vert_{e}\nonumber\\
 &\qquad +C_{\bomega}\Vert\betta_{\bu}\Vert_{0,\O}\Big) h^{\min\{s,\,k+1\}}\Vert(\bu,p)\Vert_{s},
 \nonumber\\
 &\leq  C_{1} \vert(\betta_{\bu} ,\betta_{\bomega},\eta_p)\vert_{\mathcal{A}} \,h^{\min\{s,\,k+1\}}\Vert(\bu,p)\Vert_{s},
\end{align}
where $C_{1}=C_{b_{1}}+(C_{b_{1}}+C_{b_{2}}+C_{j})+(C_{b_{2}}+C_{e})+\frac{C_{\bomega}}{\sigma}$.
Next we only need to estimate  $c(\betta_{\bomega},\betta_{\bu})$
in \eqref{projection error equation}. To do so we use Young's inequality
\begin{equation}\label{estim c eta w u}
 c(\betta_{\bomega},\betta_{\bu})\leq \frac{2}{\sqrt{\nu}}\Vert\bbbeta\Vert_{\infty,\O}
 \Vert\betta_{\bomega}\Vert_{0,\O}\Vert\betta_{\bu}\Vert_{0,\O} \leq
 \frac{2}{\nu\sigma}\Vert\bbbeta\Vert_{\infty,\O}^{2}\Vert\betta_{\bomega}\Vert_{0,\O}^{2}
 +\frac{\sigma}{2}\Vert\betta_{\bu}\Vert_{0,\O}^{2}.
 \end{equation}
Substituting \eqref{estim A eta xi} and \eqref{estim c eta w u} back into \eqref{projection error equation}, we obtain the bound 
\begin{equation*}
 \frac{\sigma}{2}\Vert\betta_{\bu}\Vert_{0,\O}^{2}+(1-\frac{2\Vert\bbbeta\Vert_{\infty}^2}{\nu\sigma})\Vert\betta_{\bomega}\Vert_{0,\O}^{2}
 +\vert\betta_{\bu}\vert_{j}^{2}+\vert \eta_{p}\vert_{e}^{2}\leq
 C_{1} \vert(\betta_{\bu} ,\betta_{\bomega},\eta_p)\vert_{\mathcal{A}} \,h^{\min\{s,\,k+1\}}\Vert(\bu,p)\Vert_{s},
\end{equation*}
and thanks to assumption \eqref{beta-assumption-DG}, we can arrive at 
\begin{equation}\label{energy projection-error bound}
 \vert(\betta_{\bu} ,\betta_{\bomega},\eta_p)\vert_{\mathcal{A}}\leq
 C'\,h^{\min\{s,\,k+1\}}\Vert(\bu,p)\Vert_{s},
\end{equation}
where $C'=C_{1}(\min\{\frac{1}{2},1-\frac{2\Vert\bbbeta\Vert_{\infty}^2}{\nu\sigma}\})^{-1}$.
The error estimate in \eqref{energy error bound DG} is then obtained
by combining estimates \eqref{energy numerical-error bound DG}, \eqref{energy projection-error bound} and using triangle inequality.\medskip

We now turn to the estimate of the $\L^{2}$-norm of error in the pressure
\eqref{pressue error bound DG}. Since $e_{p}\in\L^{2}_{0}(\Omega)$,
we can find $\bz\in \H^{1}_{0}(\O)^3$ such that (see for instance \cite[Chapter I, Corollary 2.4]{gr-1986})
\begin{equation}\label{inf sup with ep}
 -\int_{\O} e_p \,\mathrm{div} \bz\,d\bx \geq \kappa \Vert e_p\Vert_{0,\O}^{2}, \qquad \Vert\bz\Vert_{1,\O}\leq \Vert e_p\Vert_{0,\O}.
\end{equation}
Therefore, we infer from \eqref{compact DG scheme} that 
\begin{align*}
 \kappa \Vert e_p\Vert_{0,\O}^{2} &\leq \tilde b_{2}(\bz, e_p)\nonumber\\
 &=\big(\tilde b_{2}(\bz, e_p)+a(e_{\bu},\bz)+\tilde b_{1}(\bz, e_{\bomega})+c(e_{\bomega},\bz)\big)-a(e_{\bu},\bz)-\tilde b_{1}(\bz, e_{\bomega})-c(e_{\bomega},\bz)\nonumber\\
 &=\mathcal{A}(e_{\bu}, e_{\bomega},e_{p}; \bz,\textbf{0},0)-a(e_{\bu},\bz)-\tilde b_{1}(\bz, e_{\bomega})-c(e_{\bomega},\bz),
\end{align*}
where we have used  that $j(e_{\bu},\bz)=0$ for $\bz\in \H^{1}_{0}(\O)^3$.
Using the Galerkin orthogonality, we obtain  
$$\mathcal{A}(e_{\bu}, e_{\bomega},e_{p}; \bz,\textbf{0},0)= \mathcal{A}(e_{\bu}, e_{\bomega},e_{p}; \bxi_{\bz},\textbf{0},0)=\mathcal{A}(\bxi_{\bu}, \bxi_{\bomega},\xi_{p}; \bxi_{\bz},\textbf{0},0)+\mathcal{A}(\betta_{\bu}, \betta_{\bomega},\eta_{p}; \bxi_{\bz},\textbf{0},0).$$
Therefore 
$$
 \kappa \Vert e_p\Vert_{0,\O}^{2} \leq \vert\mathcal{A}(\bxi_{\bu}, \bxi_{\bomega},\xi_{p}; \bxi_{\bz},\textbf{0},0)\vert+\vert\mathcal{A}(\betta_{\bu}, \betta_{\bomega},\eta_{p}; \bxi_{\bz},\textbf{0},0)\vert+\vert a(e_{\bu},\bz)\vert+\vert\tilde b_{1}(\bz, e_{\bomega})\vert+\vert c(e_{\bomega},\bz)\vert.
$$
Next, by the definition of the form $\mathcal{A}$, we have the relation 
$$
 \vert\mathcal{A}(\betta_{\bu}, \betta_{\bomega},\eta_{p}; \bxi_{\bz},\textbf{0},0)\vert \leq \vert \tilde{b}_{1}(\bxi_{\bz},\betta_{\bomega})\vert +\vert \tilde{b}_{2}(\bxi_{\bz},\eta_{p})\vert + \vert c(\betta_{\bomega}, \bxi_{\bz})\vert +\vert j(\betta_{\bu},\bxi_{\bz})\vert 
 := T_1 + T_2 + T_3 + T_4, 
$$
 and then we can write 
\begin{equation}\label{terms to estim for error on p}
 \kappa \Vert e_p\Vert_{0,\O}^{2} \leq \vert\mathcal{A}(\bxi_{\bu}, \bxi_{\bomega},\xi_{p}; \bxi_{\bz},\textbf{0},0)\vert+T_1 + T_2 + T_3 + T_4 + T_5 + T_6 + T_7.
 \end{equation}
 
  The first term in the right-hand side of the above inequality
  can be easily estimated by using Lemma \ref{estimates bilinear forms}.
  Indeed, it follows by choosing $\bv_h=\bxi_{\bz}$ that 
 \begin{equation*}
  \vert\mathcal{A}(\bxi_{\bu}, \bxi_{\bomega},\xi_{p}; \bxi_{\bz},\textbf{0},0)\vert\leq C h^{\min\{s,k+1\}} \Vert(\bu,p)\Vert_{s}.
 \end{equation*}
 
Let us now estimate each of the terms $T_i$, $i=1,\ldots,7$ in \eqref{terms to estim for error on p}.
Using the properties of the $\L^2-$projection, Cauchy-Schwarz's inequality and the inverse inequality, we obtain 
the bounds 
\begin{align*}
 T_1&\leq C\sqrt{\nu} \Big(\sum_{T\in \cT_h}h_{T}^{-1}\Vert\bxi_{\bz}\Vert^{2}_{0,\partial T}\Big)^{1/2} \Big(\sum_{T\in \cT_h}h_{T}\Vert\betta_{\bomega}\Vert^{2}_{0,\partial T}\Big)^{1/2}\leq C \sqrt{\nu}\Vert\bz\Vert_{1,\O}\Vert\betta_{\bomega}\Vert_{0,\O}.\\
 &\leq  C \sqrt{\nu}\Vert\bz\Vert_{1,\O}\vert(\betta_{\bu} ,\betta_{\bomega},\eta_p)\vert_{\mathcal{A}}
 \end{align*}
Then, using \eqref{energy projection-error bound} and \eqref{inf sup with ep}, we can deduce that 
\begin{equation*}
T_1\leq C \sqrt{\nu} h^{\min\{s,\,k+1\}}\Vert(\bu,p)\Vert_{s}\Vert e_p\Vert_{0,\O}.
\end{equation*}

Furthermore, since $\int_{T} \bxi_{\bz}\cdot \nabla \eta_p\, d\bx=0$, we get from \eqref{form b2 after IPP} the following 
estimates 
\begin{align*}
 T_2 & = \Big{\vert}\sum_{e\in\cE_{h}}\int_{e} \avg{\bxi_{\bz}}\cdot\jump{\eta_{p}}\,ds+\sum_{e\in\cF_h^{\Sigma} }\int_{e} (\bxi_{\bz}\cdot\bn)\eta_{p}\,ds\Big{\vert}\\
 &\leq  \Big(\sum_{e\in\cE_{h}}\int_{e}\frac{1}{D_{11}}\avg{\bxi_{\bz}}^{2}\,ds + \sum_{e\in\cF_h^{\Sigma}}\int_{e}\frac{1}{D_{11}}(\bxi_{\bz}\cdot\bn)^{2} \,ds \Big )^{1/2}\vert \eta_{p}\vert_e   \\
 &\leq  C\Big(\sum_{T\in\cT_h}\frac{1}{\underline{D_{11}}}\Vert\bxi_{\bz}\Vert_{0,\partial T}^{2} \Big )^{1/2}\vert \eta_{p}\vert_e \\&\leq C \Vert \bz\Vert_{1,\O}\vert(\betta_{\bu} ,\betta_{\bomega},\eta_p)\vert_{\mathcal{A}}. 
\end{align*}
Then, using \eqref{energy projection-error bound} and \eqref{inf sup with ep}, we can infer that 
\begin{equation*}
T_2\leq C h^{\min\{s,\,k+1\}}\Vert(\bu,p)\Vert_{s}\Vert e_p\Vert_{0,\O}.
\end{equation*}
Next, using Cauchy-Schwarz's inequality together with Lemma \ref{numerical-error bounds DG}
and \eqref{energy projection-error bound}, we get
$$
  T_3=\vert c(\betta_{\bomega}, \bxi_{\bz})\vert \leq \frac{1}{\sqrt{\nu}}C
  h \Vert\bbbeta\Vert_{\infty}\Vert\betta_{\bomega}\Vert_{0,\O}\Vert\bz\Vert_{1,\O}
  \leq C\mu_{h} h^{\min\{s,k+1\}}\Vert(\bu,p)\Vert_{s}\Vert e_p\Vert_{0,\O},
$$
where $\mu_{h}=\frac{h\Vert\bbbeta\Vert_{\infty}}{\sqrt{\nu}}$.
Similarly, we have:
\begin{equation*}
  T_4=\vert j(\betta_{\bu},\bxi_{\bz})\vert \leq \vert\betta_{\bu} \vert_{j} \vert\bxi_{\bz} \vert_{j}\leq C \vert(\betta_{\bu},\betta_{\bomega},\eta_{p})\vert_{\mathcal{A}}\Vert\bz\Vert_{1,\O}
\leq  C h^{\min\{s,\,k+1\}}\Vert(\bu,0)\Vert_{s}\Vert e_p\Vert_{0,\O}.
\end{equation*}

The terms $T_5$, $T_6$ and $T_7$ can be readily estimated, much in the same way as before, 
 using the error bound in \eqref{energy error bound DG} and the fact that $\bz\in \H^{1}_{0}(\O)^3$.

Finally, the pressure estimate follows after putting all individual bounds back into \eqref{terms to estim for error on p}.
\end{proof}

\section{Numerical tests}\label{sec:numer}

\begin{figure}[h!]
\begin{center}
\subfigure[]{\includegraphics[width=0.325\textwidth]{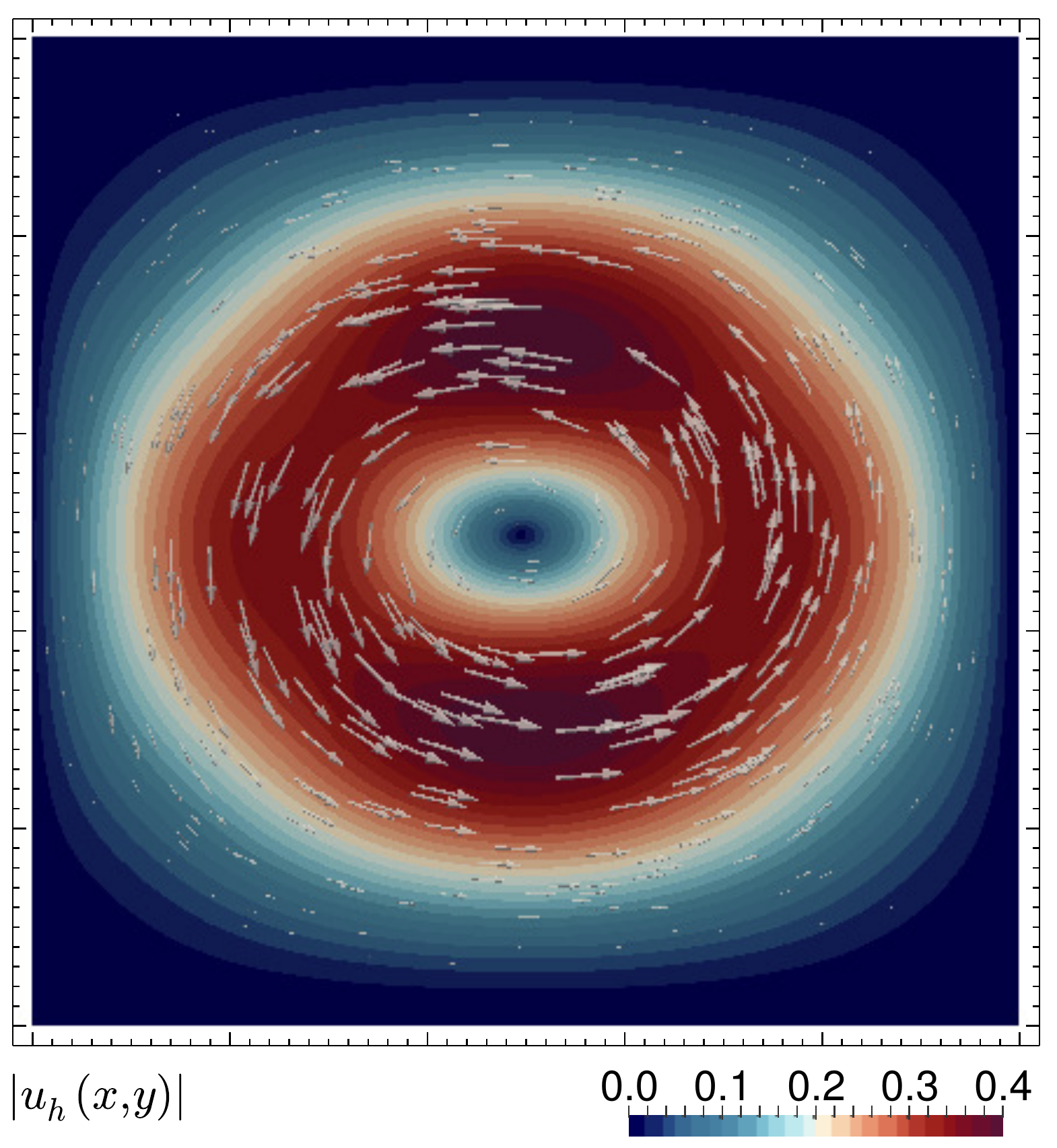}}
\subfigure[]{\includegraphics[width=0.325\textwidth]{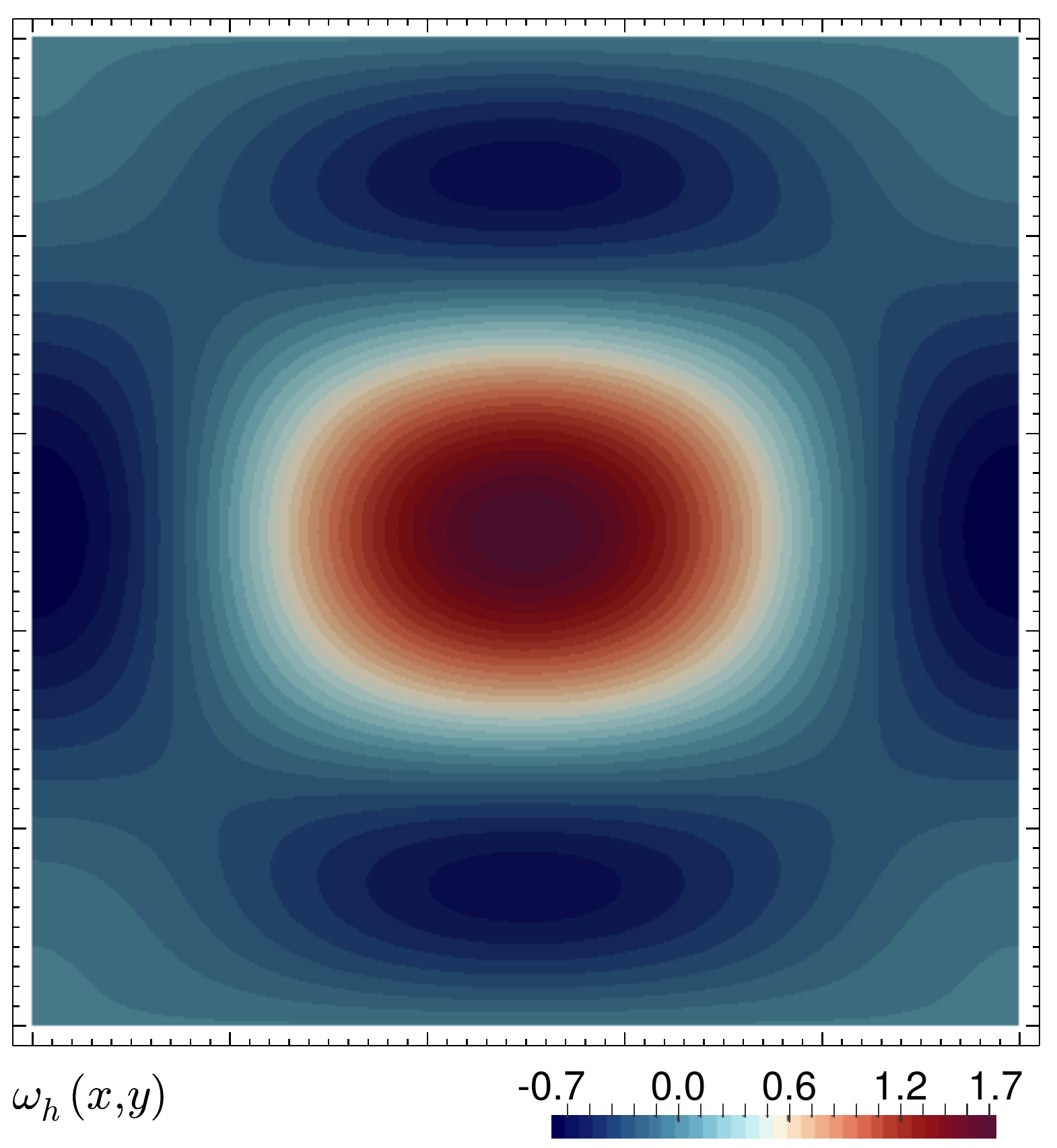}}
\subfigure[]{\includegraphics[width=0.325\textwidth]{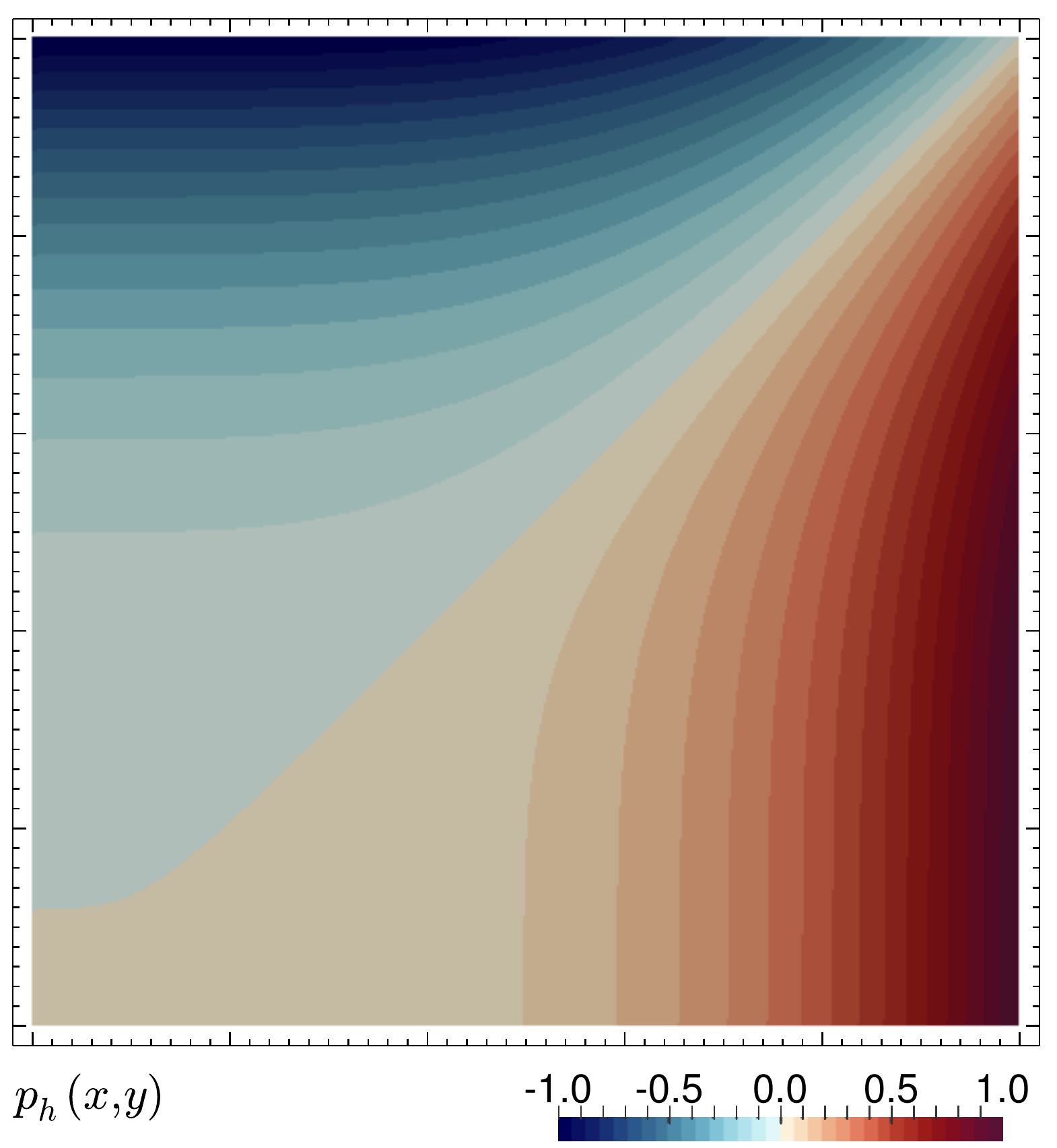}}
\end{center}

\vspace{-0.5cm}
\caption{Experimental convergence in 2D. Lowest-order mixed finite element approximation of velocity 
magnitude (a), vorticity (b), and Bernoulli pressure (c) on the unit square. }
\label{fig:ex01}
\end{figure}

We present a set of examples to confirm numerically the
convergence rates anticipated in Theorem~\ref{theo:conv} and Theorem~\ref{th: convergence DG} . We stress that
whenever $\Gamma=\partial\Omega$, the
zero-mean condition enforcing the uniqueness of the Bernoulli pressure is
implemented using a real Lagrange multiplier (which amounts to add 
one row and one column to the corresponding matrix system). Linear solves 
are performed with the direct method SuperLU.

\paragraph{Test 1: Experimental convergence in 2D.}  For our first 
examples we produce the error history associated with the proposed mixed finite element and mixed 
DG approximations. Let us consider the following closed-form solutions to the Oseen equations 
defined on the unit square domain $\Omega=(0,1)^2$:
\begin{align*}
\bu(x,y) =\! \begin{pmatrix}
\sin(\pi x)^2\sin(\pi y)^2\cos(\pi y)\\
-\frac{1}{3}\sin(2\pi x)\sin(\pi y)^3\end{pmatrix}, \quad 
\bomega (x,y)= \sqrt{\nu} \curl\bu,\quad 
p(x,y) = x^4 - y^4.\end{align*}
The exact velocity has zero normal component on the whole boundary, and the exact 
vorticity is employed to impose a non-homogeneous vorticity trace. In this 
example we are assuming 
that $\Gamma=\partial\Omega$, and 
the exact Bernoulli 
pressure fulfils the null-average condition. We consider the model parameters 
$\nu = 0.1$ and $\sigma = 10$, and the convecting velocity $\bbbeta$ is taken as the exact 
velocity solution, which in particular satisfies the bound \eqref{beta-assumption}.  
On a sequence of uniformly refined meshes we compute 
errors between the exact and approximate solutions,  measured in the
norms involved in the convergence analysis of Section~\ref{sec:FE}. 
The obtained error history is reported in
Table~\ref{table01}, where the rightmost column displays the $\ell^\infty-$norm 
of the nodal values of the velocity divergence projected to the space $\Q_h$, 
all approaching machine precision. 
The asymptotic $O(h^{k+1})$ decay of the error
observed for each field variable 
confirms the overall optimal convergence 
predicted by Theorem~\ref{theo:conv}. Sample approximate solutions
generated with the lowest order method on a coarse mesh are portrayed in
Figure~\ref{fig:ex01}.

An analogous test is now carried out to confirm numerically the convergence 
rates of the DG methods defined by \eqref{Mixed DG scheme}. The same model 
parameters and closed-form solutions are used, and the stabilisation constants 
in \eqref{choice param stab C11}-\eqref{choice param stab D11} take the values 
$a_{11}=c_{11}=\sigma$ and $d_{11}=\nu$. The results collected in Table~\ref{table01b} 
indicate that the DG scheme converges optimally when we measure errors in the energy 
$\mathcal{A}-$seminorm \eqref{energy DG norm} and in the $\L^2-$norm of the pressure. 
Here however, we do not expect divergence-free approximate velocities.

\begin{table}[t]
\begin{center}
{\small\begin{tabular}{|l|c|c|c|c|c|c|c|}
\hline
DoF  &    $\|\bu-\bu_h\|_{\H}$  &  \texttt{rate} &  $\|\bomega-\bomega_h\|_{\Z}$  
&   \texttt{rate}  &  $\|p-p_h\|_{0,\Omega}$  &  \texttt{rate} & $\|\vdiv\bu_h\|_{\ell^\infty}$ \\  
\hline
\multicolumn{8}{|c|}{$k=0$}\\
\hline
34  & 0.1357 &    -- & 1.2943 &     -- & 0.2002 &    --  & 1.0e-14\\
114 & 0.1129 & 0.2654 & 1.0072 & 0.3612 & 0.1219 & 0.7161 & 1.3e-14\\
418 & 0.0619 & 0.8655 & 0.5623 & 0.8405 & 0.0572 & 1.0910 & 1.3e-15\\
1602  & 0.0315 & 0.9763 & 0.2869 & 0.9707 & 0.0280 & 1.0311 &1.2e-15 \\
6274 & 0.0158 & 0.9952 & 0.1441 & 0.9937 & 0.0139 & 1.0072 & 1.0e-14\\
24834 & 0.0079 & 0.9989 & 0.0721 & 0.9985 & 0.0069 & 1.0022 & 1.3e-15\\
98818 & 0.0039 & 0.9997 & 0.0361 & 0.9996 & 0.0035 &    1.0000& 1.7e-13\\
\hline
\multicolumn{8}{|c|}{$k=1$}\\
\hline
98 & 0.0980 &   -- & 0.8753 &    -- & 0.0624 &   -- & 2.4e-15\\
354 & 0.0337 & 1.5384 & 0.3448 & 1.3441 & 0.0173 & 1.8502 &4.3e-14 \\
1346 & 0.0094 & 1.8472 & 0.0979 & 1.8173 & 0.0038 & 2.1804 & 3.8e-13\\
5250 & 0.0024 & 1.9514 & 0.0255 & 1.9403 & 8.3e-04 & 2.1952 & 8.0e-14\\
20738 & 6.4e-04 & 1.9873 & 0.0064 & 1.9835 & 1.9e-04 & 2.0791 & 1.2e-15\\
82434 & 2.2e-04 & 1.9973 & 0.0016 & 1.9960 & 4.8e-05 & 2.0233 & 6.2e-15\\
328706 & 3.8e-05 & 1.9992 & 4.1e-04 & 1.9992 & 1.2e-05 & 2.0064&1.4e-14\\
\hline
\multicolumn{8}{|c|}{$k=2$}\\
\hline
194 & 0.0563 &   -- & 0.5138 &   -- & 0.0237 &   -- & 1.7e-13\\
722 & 0.0078 & 2.8844 & 0.0893 & 2.7524 & 0.0024 & 3.2137 &5.8e-14 \\
2786  & 0.0011 & 2.9302 & 0.0121 & 2.9886 & 1.8e-04 & 3.1729 & 4.6e-14\\
10946 & 1.3e-04 & 2.9872 & 0.0015 & 2.9873 & 1.4e-05 & 3.2661& 4.3e-15\\
43394 & 1.6e-05 & 2.9992 & 1.9e-04 &    3.0002 & 1.4e-06 & 3.2349 & 2.7e-15\\
172802  & 2.1e-06 & 2.9981 & 2.3e-05 & 3.0014 & 1.6e-07 & 3.1252 & 4.9e-15\\
689666 & 5.3e-07 & 2.9840 & 8.2e-06 & 3.0070 & 3.7e-08 & 2.9206 &5.0e-14\\
\hline
\end{tabular}}\end{center}
\caption{Test 1A: Error history (errors on a sequence of successively refined grids, convergence rates, and 
divergence norms) associated with the mixed finite element method \eqref{probdics} using different 
polynomial degrees.} \label{table01}
\end{table}

\begin{table}[h]
\begin{center}
{\small \begin{tabular}{|l|c|c|c|c|c|}
\hline
DoF  & $h$ &   $|(\bu-\bu_h,\bomega-\bomega_h,p-p_h)|_{\mathcal{A}}$  &  \texttt{rate}  &  $\|p-p_h\|_{0,\Omega}$  &  \texttt{rate}  \\  
\hline
\multicolumn{6}{|c|}{$k=0$}\\
\hline
65    & 0.7071 & 0.8031 &  --    & 0.4623 &   -- \\
257 & 0.3536 & 0.4321 & 0.8893 & 0.2298 & 0.8384\\
1025 & 0.1768 & 0.2343 & 0.8827 & 0.1276 & 0.8438\\
4097 & 0.0884 & 0.1217 & 0.9448 & 0.0652 & 0.9668\\
16385 & 0.0442 & 0.0616 & 0.9822 & 0.0326 & 1.0010\\
65537 & 0.0221 & 0.0302 & 1.0231 & 0.0163 & 1.0005\\
\hline
\multicolumn{6}{|c|}{$k=1$}\\
\hline
145 & 0.7071 & 0.4787 &   -- & 0.1847 &    -- \\
577 & 0.3536 & 0.1496 & 1.6760 & 0.0529 & 1.8033 \\
2305 & 0.1768 & 0.0366 & 2.0297 & 0.0133 & 1.9914 \\
9217 & 0.0884 & 0.0089 & 2.0413 & 0.0033 & 2.0220 \\
36865 & 0.0442 & 0.0021 & 2.0320 & 0.0008 & 2.0361 \\
134696 & 0.0221 & 0.0005 & 2.0034 & 0.0002 & 2.0049 \\
\hline
\multicolumn{6}{|c|}{$k=2$}\\
\hline
257 & 0.7071 & 0.1882 &    -- & 0.0534 &    -- \\
1025 & 0.3536 & 0.0319 & 2.5864 & 0.0111 & 2.2722\\
4097 & 0.1768 & 0.0042 & 2.9180 & 0.0013 & 3.0093\\
16385 & 0.0884 & 0.0005 & 3.0362 & 0.0002 & 3.1105\\
65537 & 0.0442 & 6.16e-5 & 3.0640 & 1.81e-5 & 3.1510\\
268049 & 0.0221 & 1.03e-5 & 2.9973 & 2.78e-6 & 3.0076\\
\hline
\end{tabular}}\end{center}
\caption{Test 1B: Error history associated to the DG method defined in \eqref{Mixed DG scheme} 
 using increasing polynomial degree.} \label{table01b}
\end{table}

\begin{figure}[h!]
\begin{center}
\subfigure[]{\includegraphics[width=0.325\textwidth]{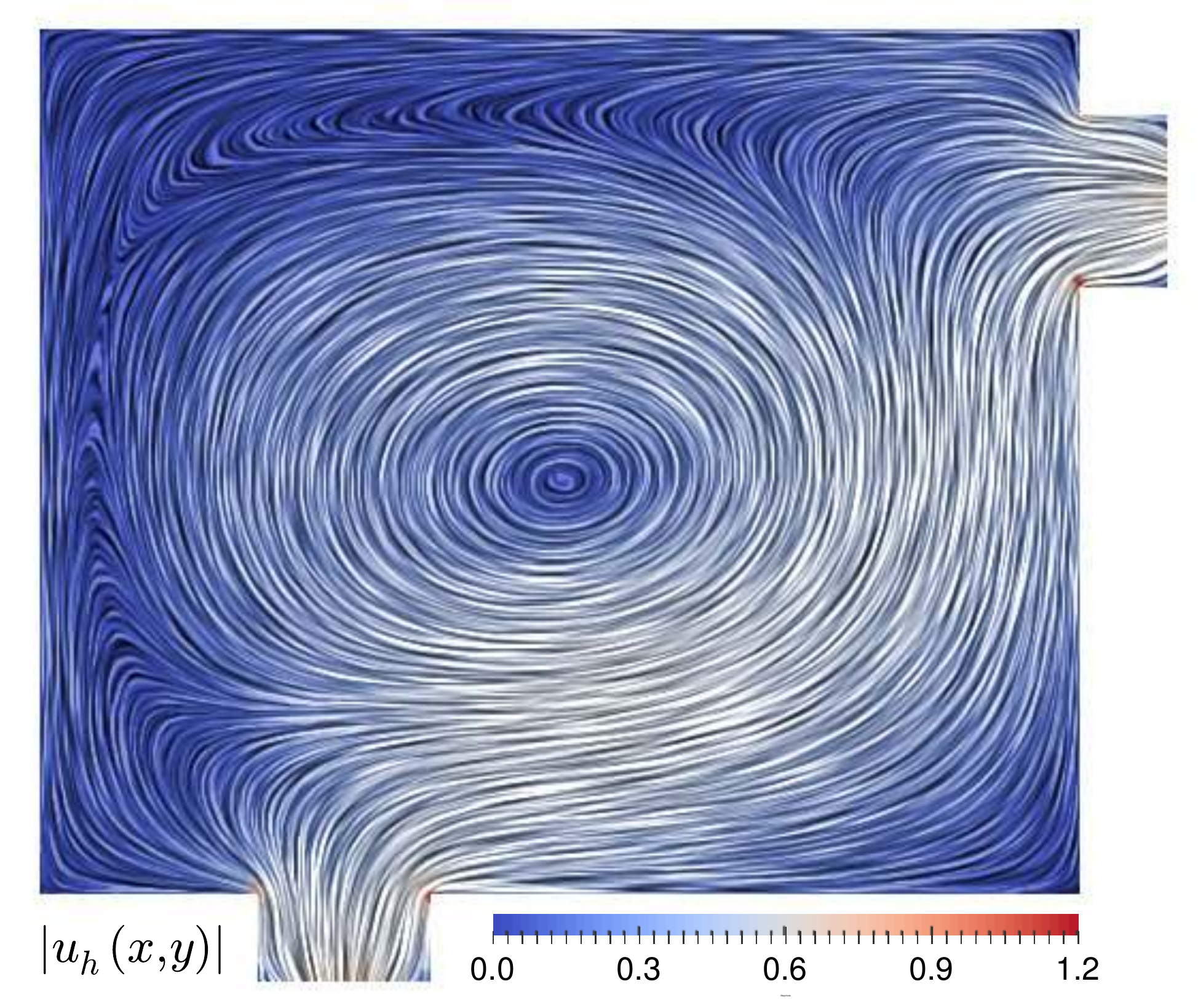}}
\subfigure[]{\includegraphics[width=0.325\textwidth]{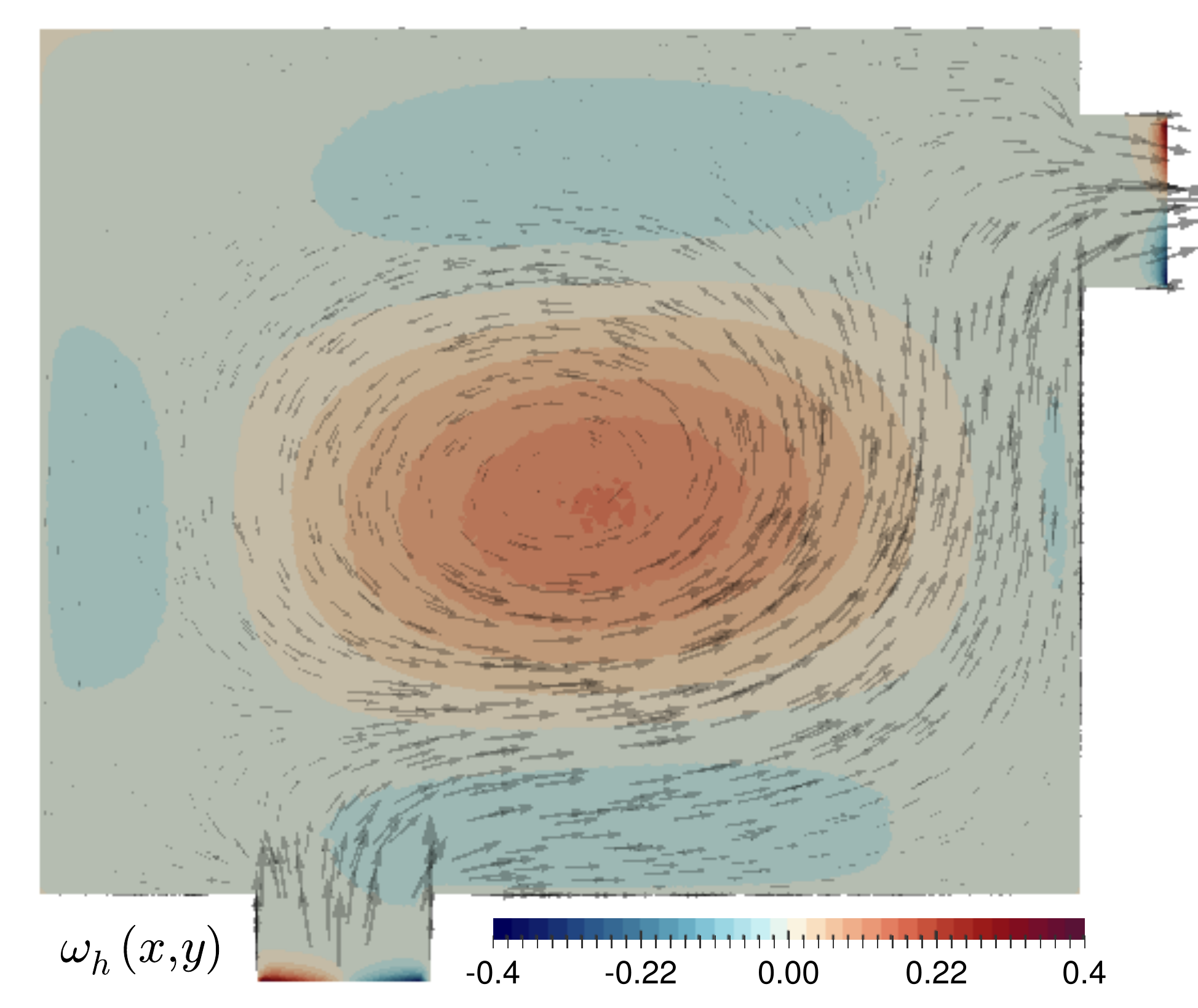}}
\subfigure[]{\includegraphics[width=0.325\textwidth]{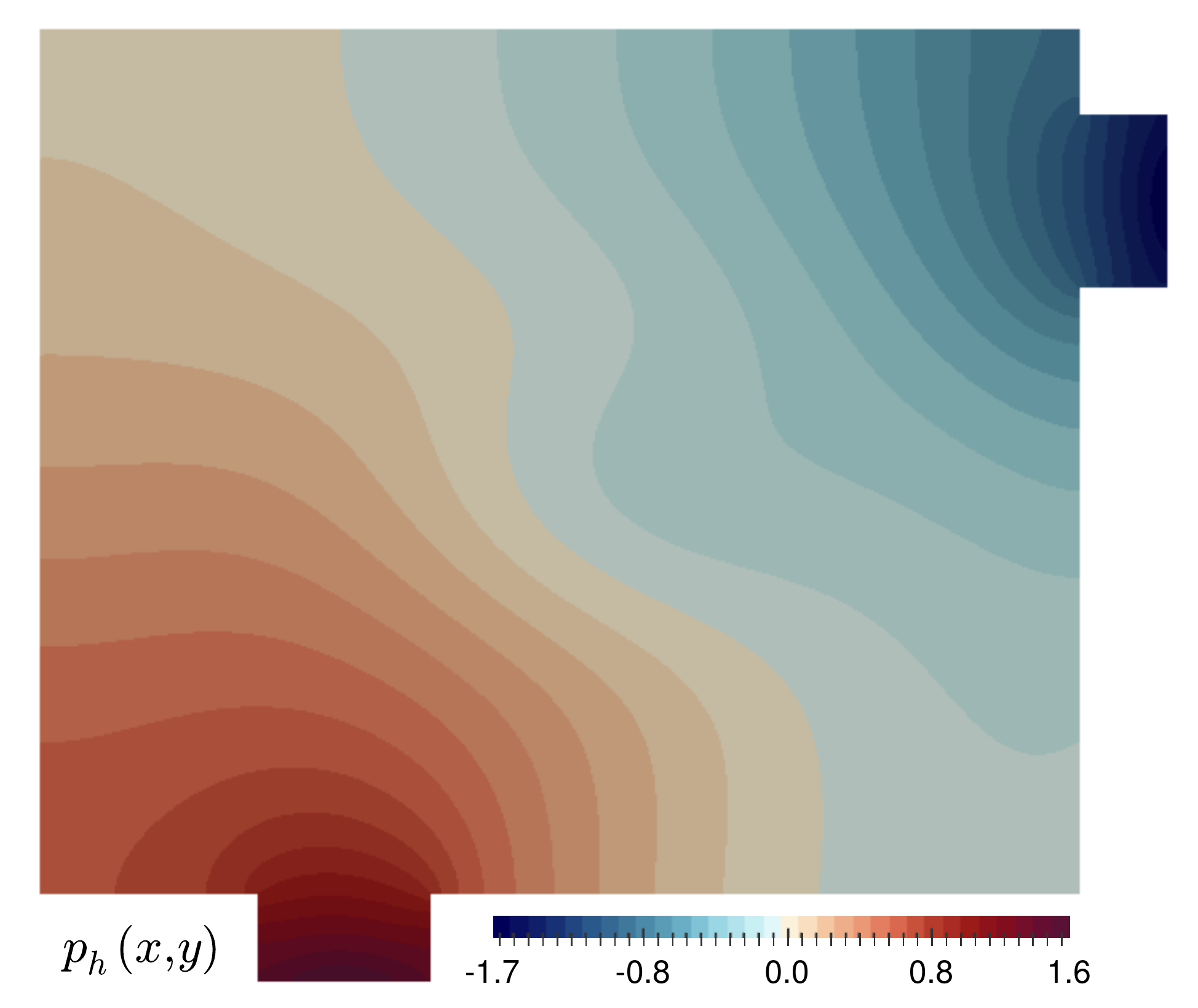}}\\
\subfigure[]{\includegraphics[width=0.325\textwidth]{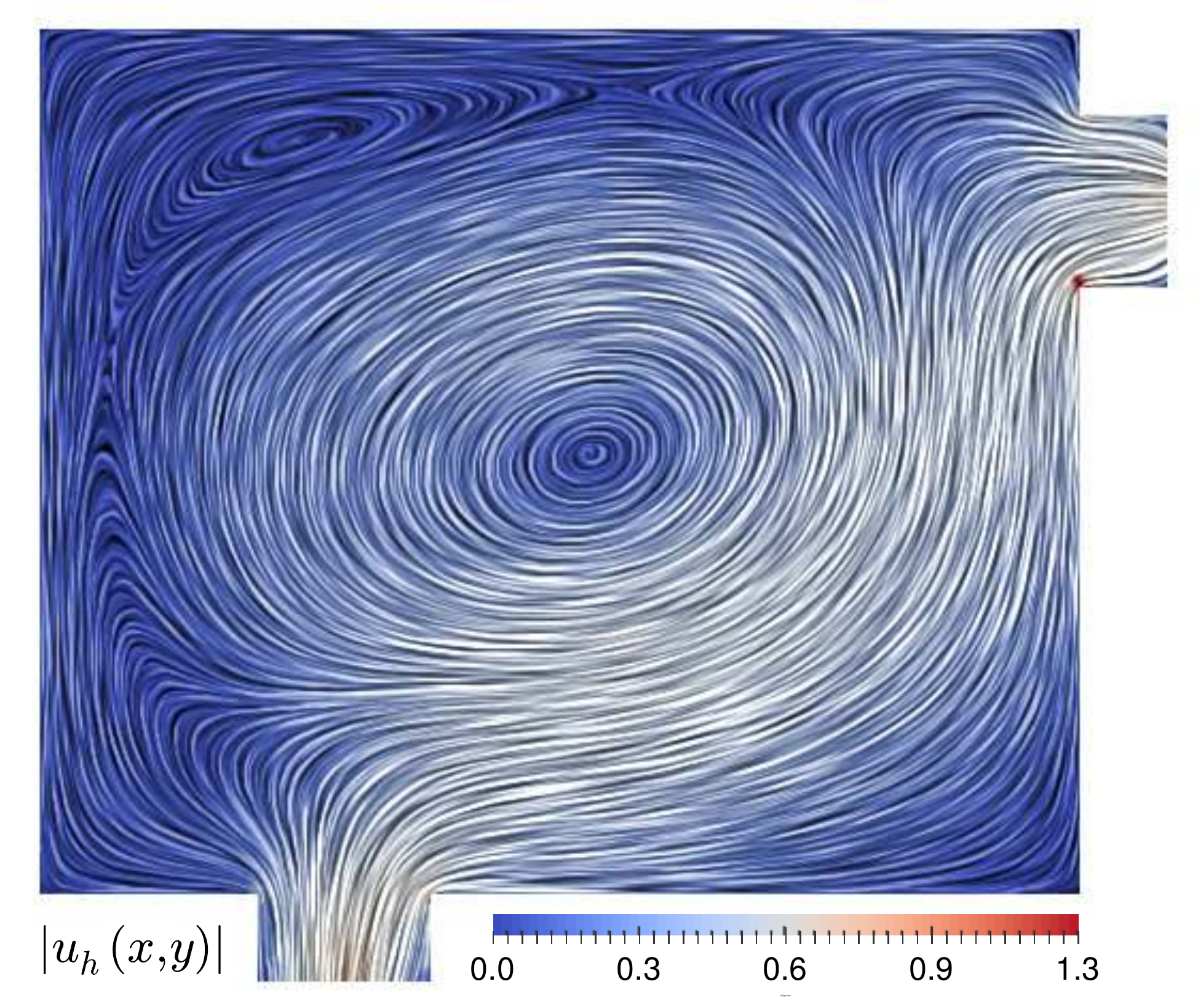}}
\subfigure[]{\includegraphics[width=0.325\textwidth]{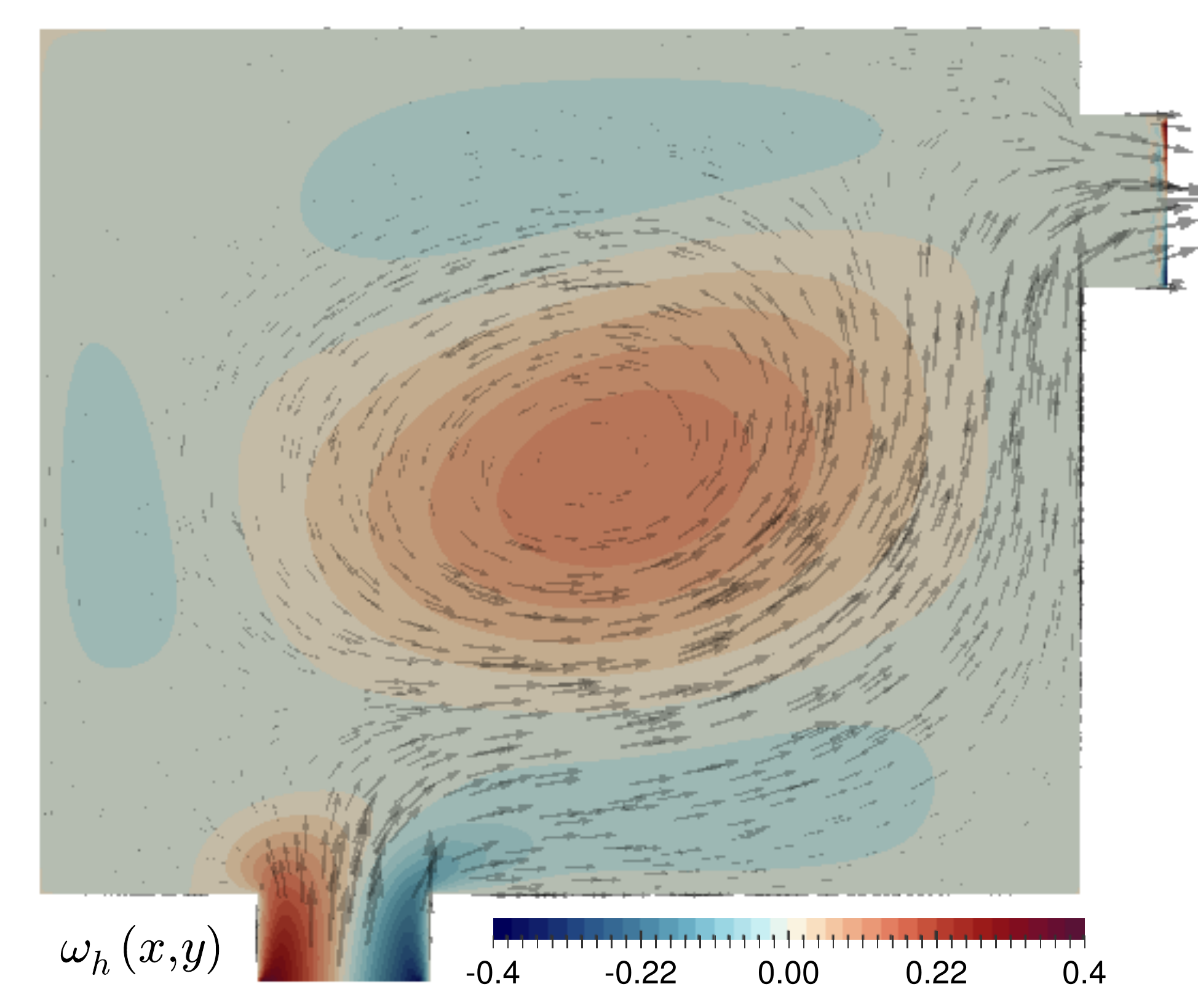}}
\subfigure[]{\includegraphics[width=0.325\textwidth]{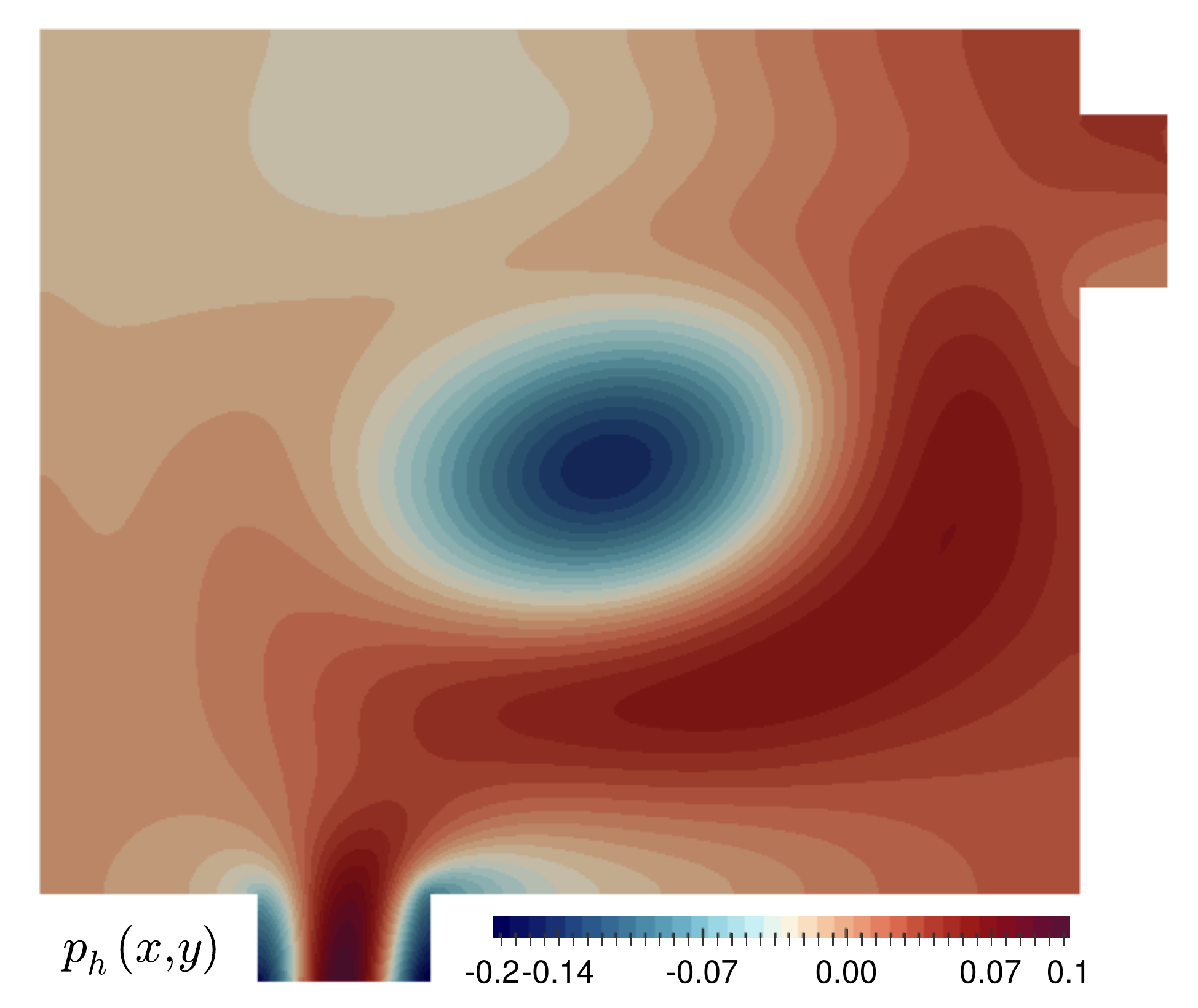}}
\end{center}

\vspace{-0.5cm}
\caption{Second-order DG approximation of the transient flow patterns 
in an open cavity after one timestep (a,b,c) and after four time steps (d,e,f), using $\Delta t = 0.1$ and 
$\nu = 0.001$. }
\label{fig:ex02}
\end{figure}

\paragraph{Test 2: Transient flow in an open cavity.}  In this example we illustrate a more complex 
problem involving the non-stationary behaviour of the flow in an open 2D cavity. The main compartment  
of the domain consists on a rectangle $(0,1.2)\times(0,1)$ whereas smaller rectangles $(0.25,0.45)\times (-0.1,0)$ 
and $(1.2,1.3)\times (0.7,0.9)$ play the role of inlet and outlet channels. The domain is discretised into 
an unstructured mesh of 35433 triangular elements. 
Normal velocities and a compatible 
trace vorticity are imposed on the whole boundary $\Gamma=\partial\Omega$ according to  
$$\bu\cdot\bn = \begin{cases}
-75(x-0.25)(0.45-x) & \text{on $y=-0.1$},\\
 75(y-0.7)(0.9-y) & \text{on $x=1.3$},\\
0 & \text{otherwise},\end{cases}\quad \bomega\times\bn = \begin{cases}
75\sqrt{\nu}(0.7-2x)& \text{on $y=-0.1$},\\
-75\sqrt{\nu}(1.6-2y)& \text{on $x=1.3$},\\
0 & \text{otherwise},\end{cases}$$
that is, parabolic inlet and outlet profiles together with slip velocities elsewhere on $\partial\Omega$. 
We set a fluid viscosity of $\nu = 0.001$ and use as initial velocity the solution adapted from the previous test 
$\bu_0(x,y) =[\sin(\pi/1.3 x)^2\sin(\pi/1.1 (y+0.1))^2\cos(\pi/1.1 (y+0.1)),
-\frac{1}{3}\sin(2/1.3\pi x)\sin(\pi/1.1 (y+0.1))^3]^T$. 
The parameter $\sigma = 10$ 
indicates a timestep of $\Delta t = 0.1$, and a backward Euler discretisation implies that we take 
$\ff = \sigma \hat{\bu}$, where $\hat{\bu}$ denotes the velocity approximation at the previous iteration. 
The convective velocity $\bbbeta = \hat{\bu}$ therefore needs to be updated at each iteration.  At least 
for the initial solution we have that the convecting velocity satisfies the assumption \eqref{beta-assumption}. 
The simulation 
is run until $T_{\text{final}} = 4\Delta t$ and we present in Figure~\ref{fig:ex02} two snapshots of the 
numerical solutions at $t = \Delta t$ and $t = T_{\text{final}}$, computed with a second-order DG scheme, and 
using the stabilisation parameters $a_{11}=c_{11}=\sigma$ and $d_{11}=\nu$. From the velocity plots (including a line 
integral convolution visualisation), we can evidence the formation of a main vortex on the centre of the domain 
plus smaller recirculation areas that emerge on the top left and bottom left corners, together with a preferential path joining the inlet and outlet 
boundaries.

\begin{figure}[t!]
\begin{center}
\subfigure[]{\raisebox{0.06\height}{\includegraphics[width=0.29\textwidth]{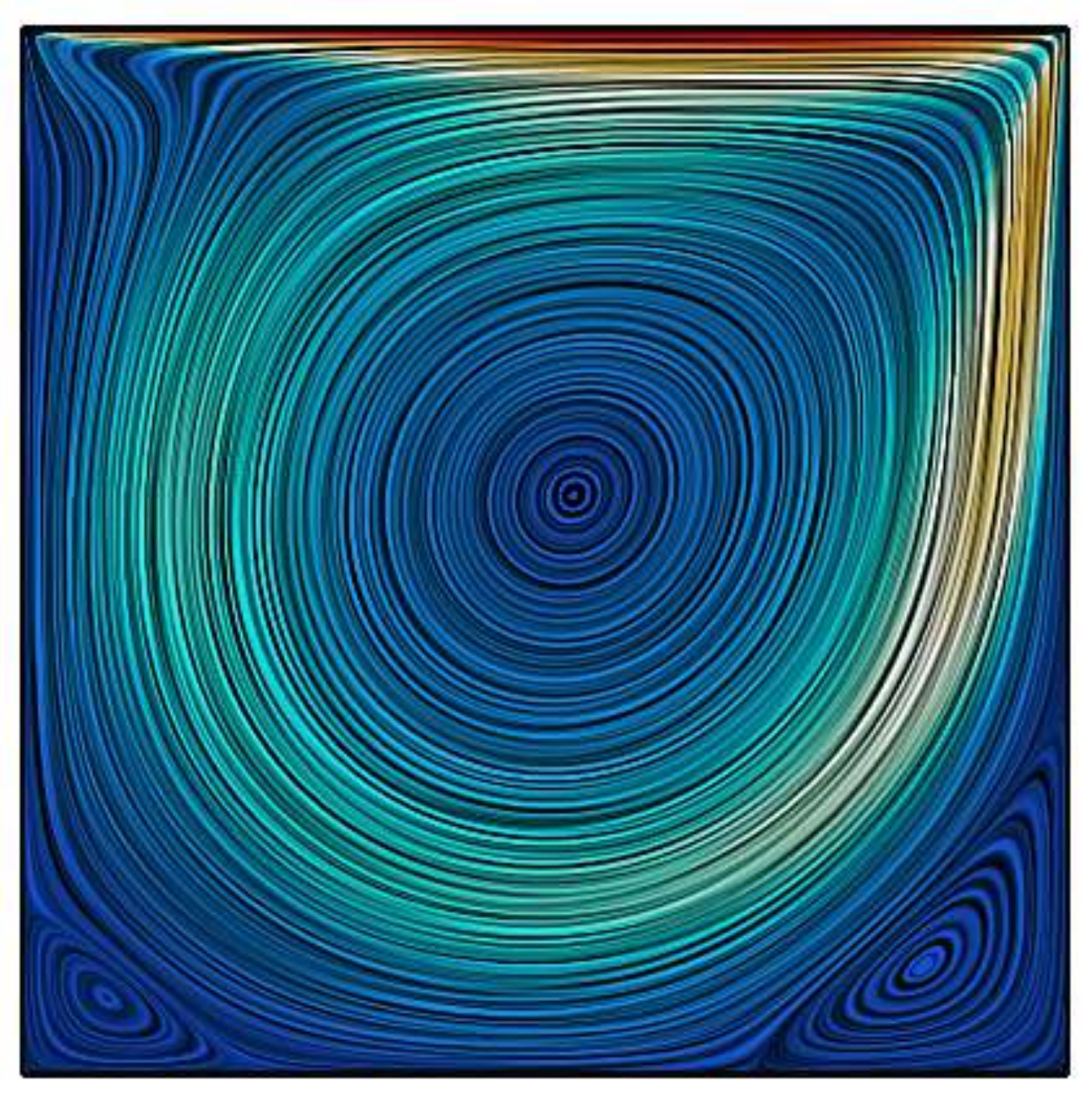}}\label{fig:2d-vortex}}\qquad 
\subfigure[]{\includegraphics[width=0.32\textwidth]{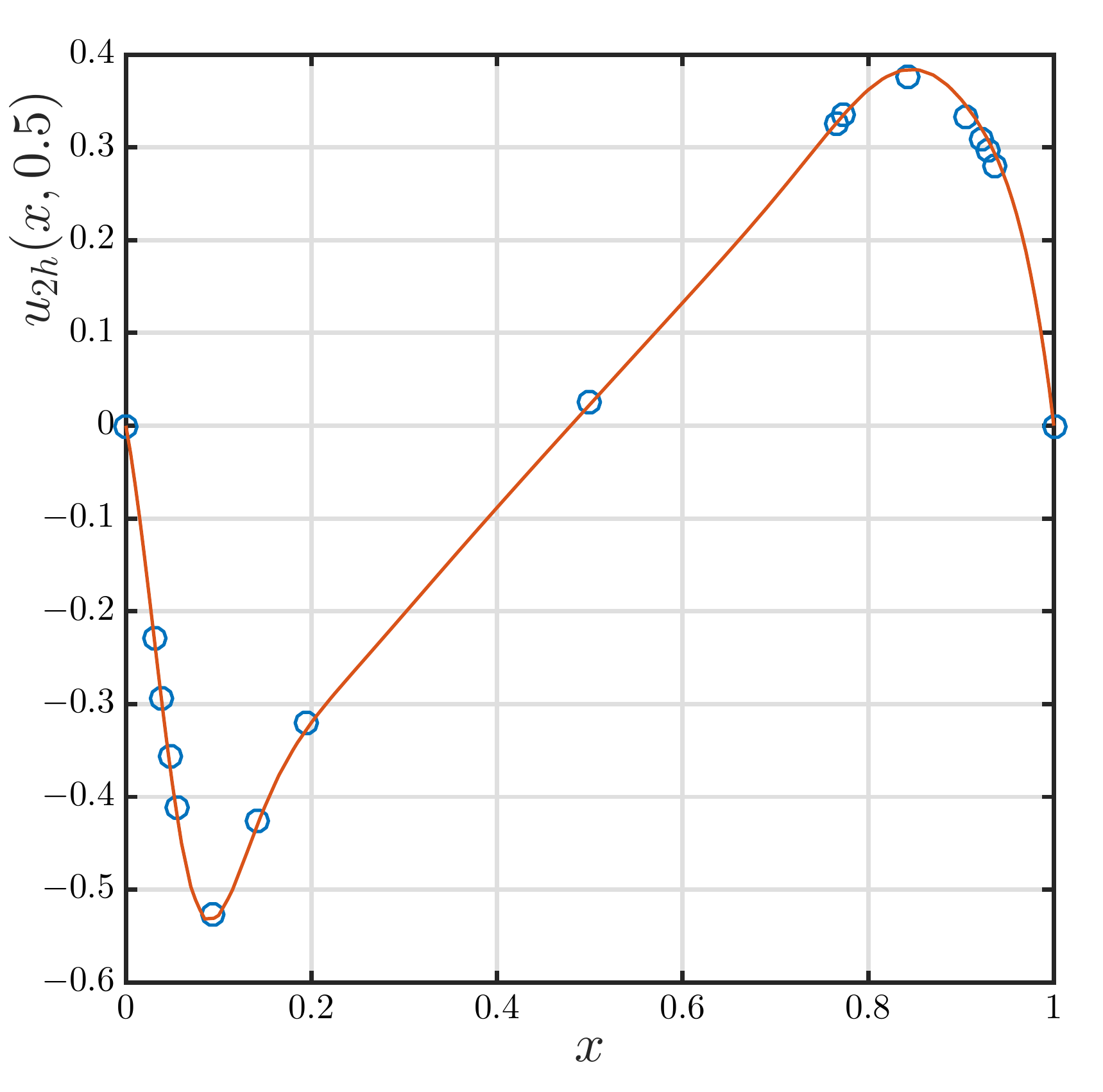} \label{fig:2d-cut1}}\\[-1ex]
\subfigure[]{\includegraphics[width=0.32\textwidth]{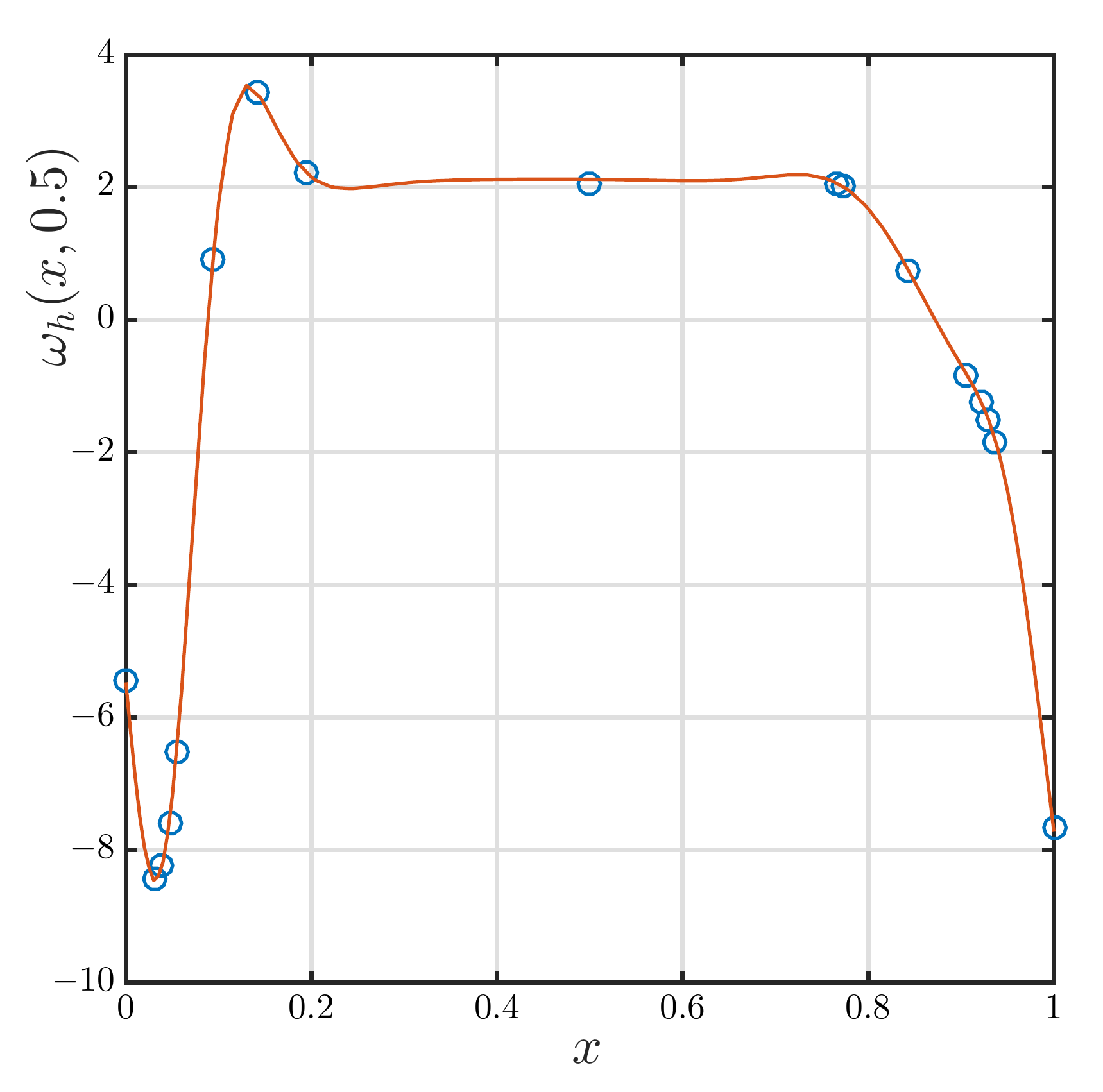}}
\subfigure[]{\includegraphics[width=0.32\textwidth]{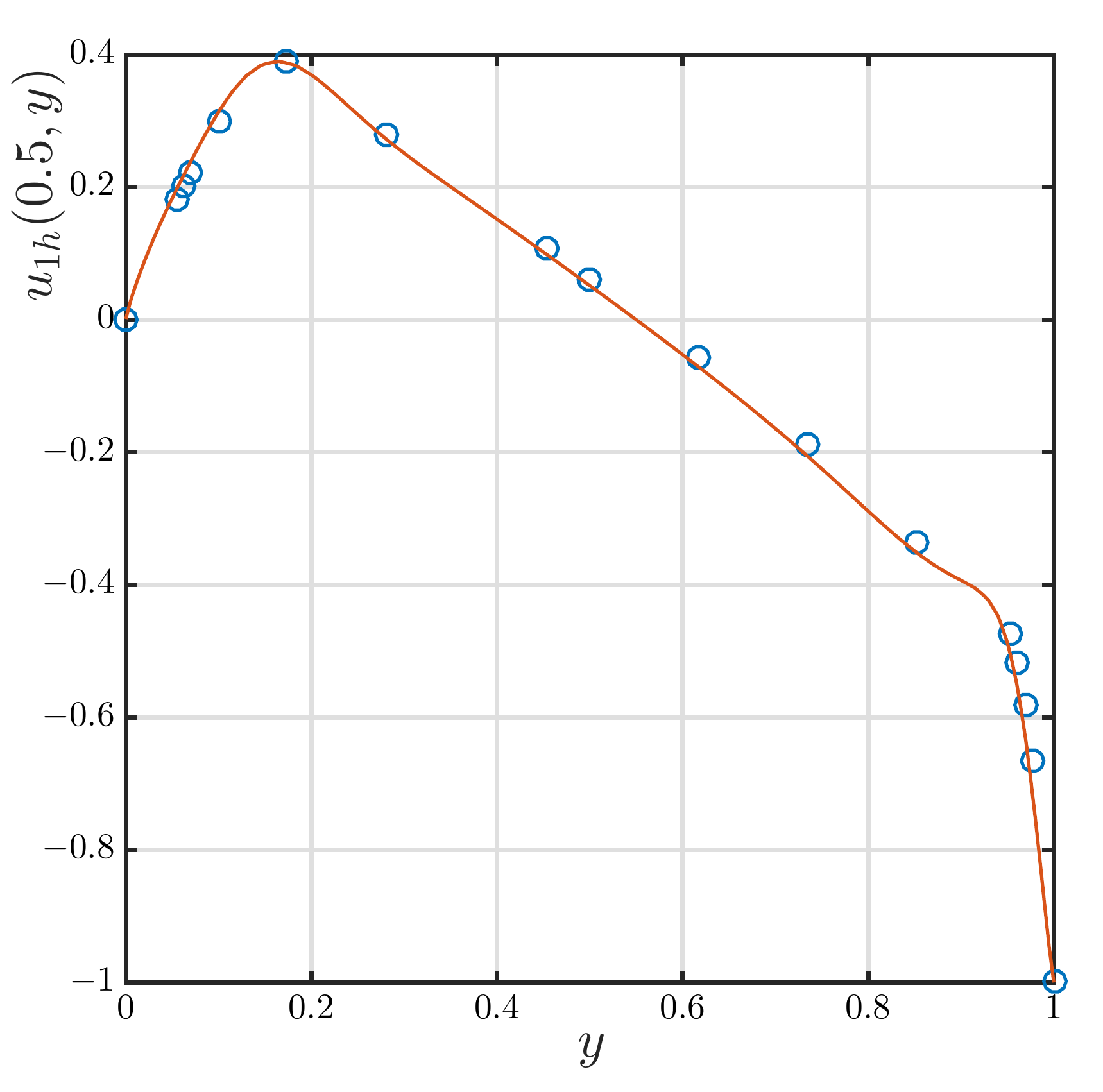}}
\subfigure[]{\includegraphics[width=0.32\textwidth]{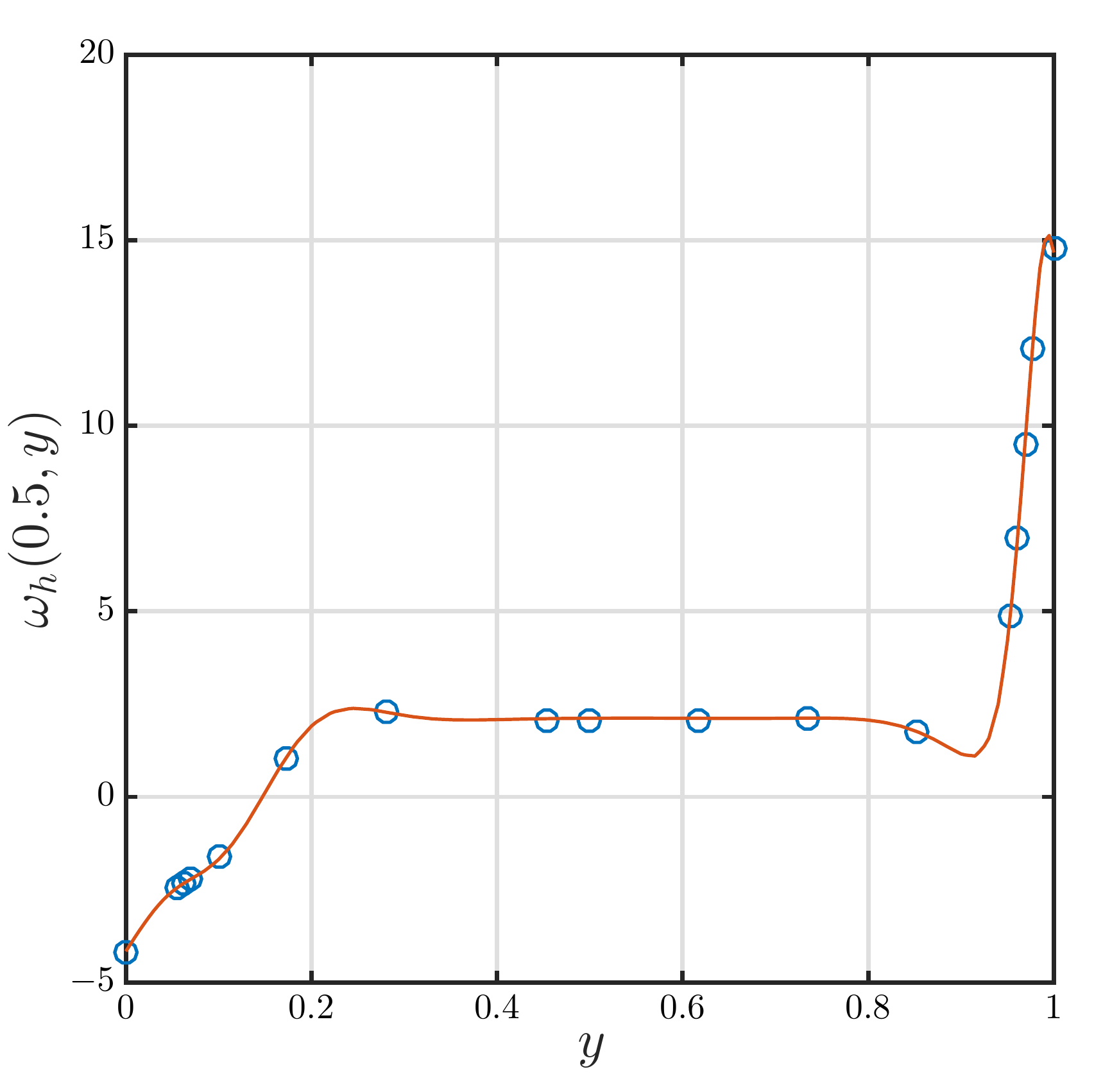}\label{fig:2d-cut4}}\\
\subfigure[]{\includegraphics[width=0.325\textwidth]{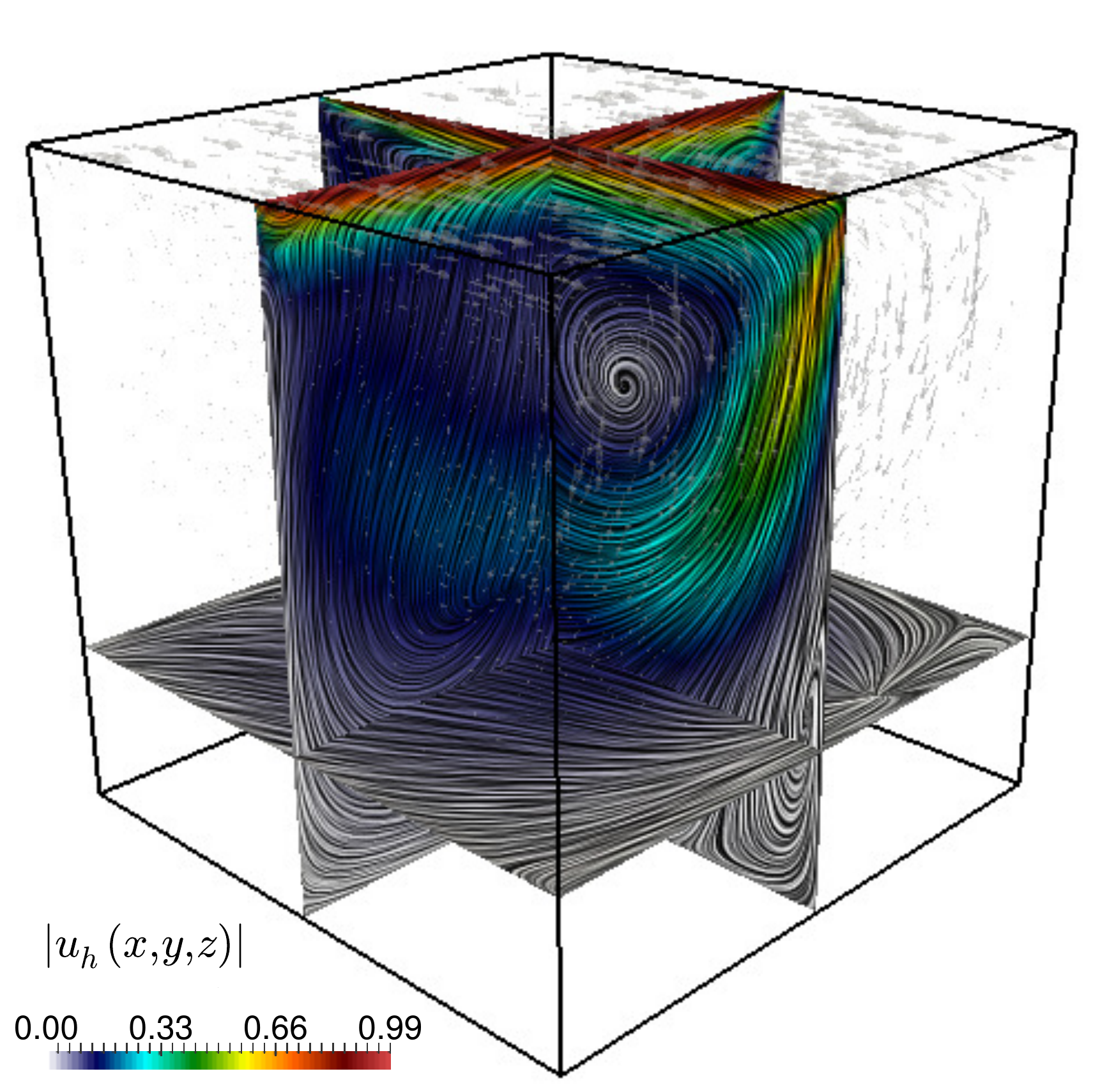}\label{fig:ex03a}}
\subfigure[]{\includegraphics[width=0.325\textwidth]{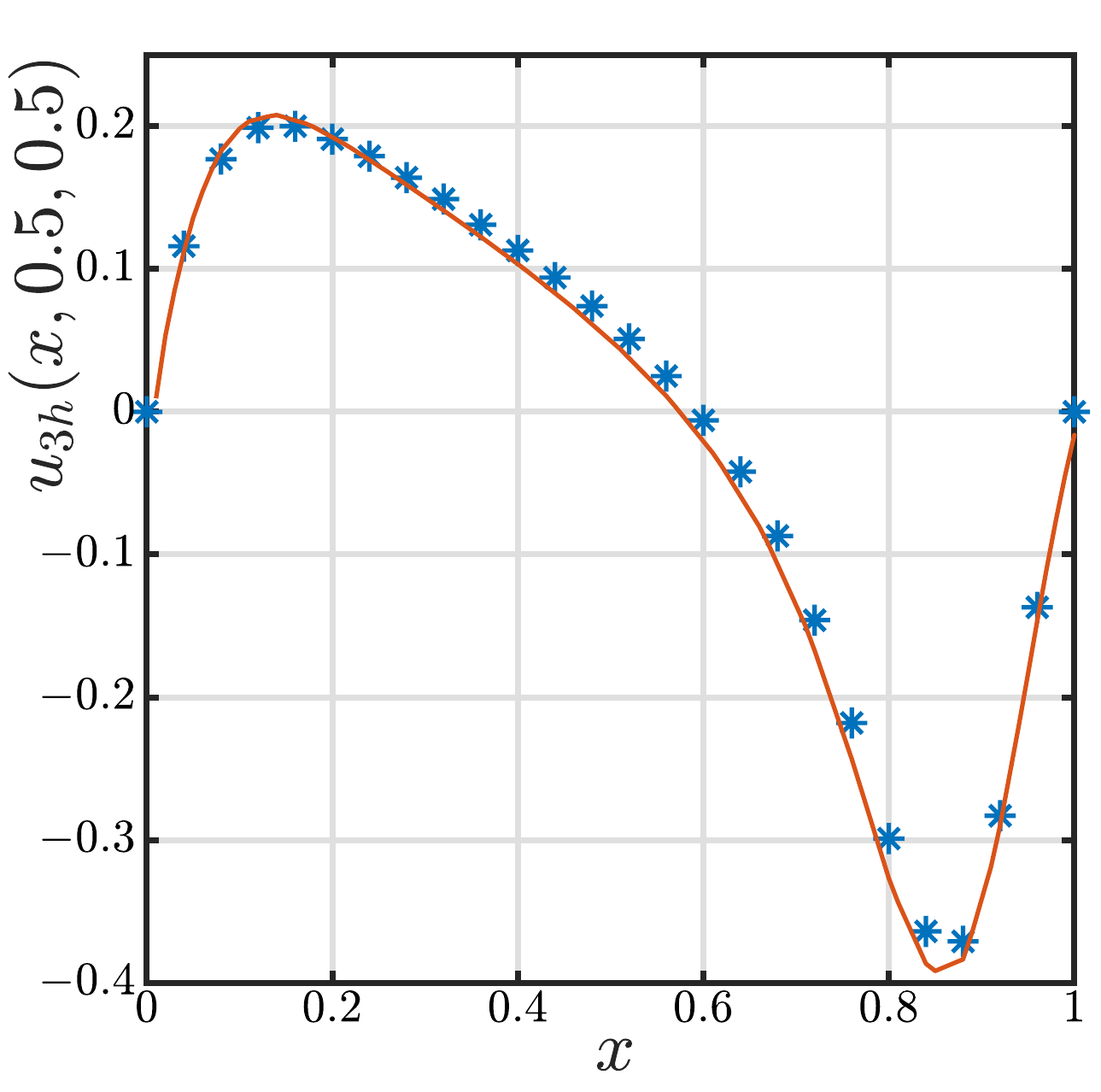}\label{fig:ex03cutA}}
\subfigure[]{\includegraphics[width=0.325\textwidth]{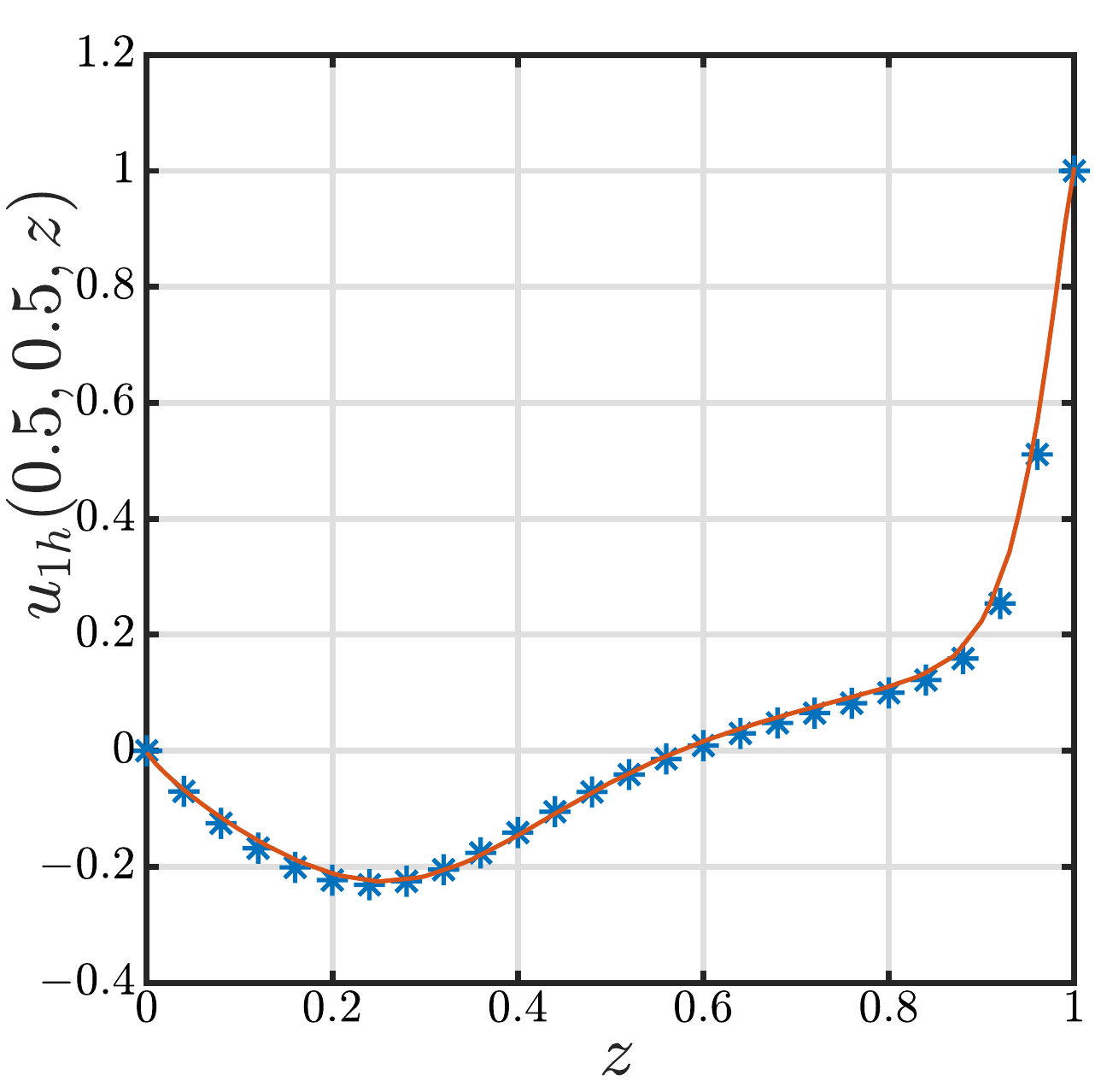}\label{fig:ex03cutB}}
\end{center}

\vspace{-0.5cm}
\caption{Test~3. Line integral convolution visualisation of velocity for 
the 2D cavity flow benchmark with $\nu =0.001$ (a), cuts of the vertical velocity and vorticity on the line $y=0.5$ (b,c); 
and horizontal velocity and vorticity on the line $x=0.5$ (d,e). The circle markers 
indicate benchmark values from \cite{botella98}.
Lowest-order approximation of velocity (f)  
for the 3D case with $\nu =0.0025$, shown at $t=20$; and profile of the horizontal velocity on the line $y=0.5,z=0.5$ (g), 
and of the vertical velocity on the line $x=0.5,y=0.5$ (h).  The asterisks  
indicate benchmark values from \cite{ding06} and all numerical approximations for this test were 
obtained with the lowest-order mixed finite element method.}
\end{figure}

\paragraph{Test 3: Lid driven cavity flow.} For this classical 
benchmark problem we consider zero external forces and concentrate on 
the case where flow recirculation occurs by Dirichlet conditions only. 
First we consider the two-dimensional case, where on the top lid of the 
unit square (at $y=1$) we set a unidirectional  velocity of 
unit magnitude, whereas no-slip conditions and zero tangential vorticity 
are imposed on the remaining sides of the boundary.  We set the 
parameters $\nu =0.001,\sigma =50$ 
and employ a structured mesh of 4096 elements. The initial velocity is 
computed from a Stokes solution (setting both $\sigma$ and $\bbbeta$ to zero). 
 We compare the results obtained with  
our lowest-order FE scheme against the benchmark data from \cite{botella98} (produced with a 
spectral method applied to a vorticity-based formulation). Figure~\ref{fig:2d-vortex} 
shows the generated velocity profile at $t=20$, having all the flow features expected 
for this regime. The solid lines in Figures~\ref{fig:2d-cut1}-\ref{fig:2d-cut4} portray cuts of the 
solution on the mid-lines of the domain, whereas the circle markers indicate benchmark data. 
The approximate vorticity has been rescaled with 
$\nu^{-1/2}$ to reflect the overall agreement with the results reported in \cite{botella98}.   

We also test the 3D implementation and formulation by conducting the same 
benchmark on the unit cube $\Omega = (0,1)^3$. Again, boundary $\Sigma$ is the 
top plate (defined by $z=1$), where we set tangential velocity of magnitude one, and on 
$\Gamma=\partial\Omega\setminus\Sigma$ we consider no-slip velocity and 
zero tangential vorticity. The fluid viscosity is now  
$\nu = 0.0025$ and a structured mesh of 58752 tetrahedral elements is employed. 
Once again we focus on a non-stationary regime with 
a backward Euler scheme, now using $\sigma = 10$, and proceed to update the convective 
velocity and the right-hand side using the velocity approximation at the 
previous time iteration. The velocity field for a converged solution after 200 time steps is 
shown in Figure~\ref{fig:ex03a}. We can observe the expected asymmetric 
vortex forming parallel to the $xz$ plane (also the generation of high pressure 
near the corners where the Dirichlet velocity datum has 
a discontinuity). We have also compared our results with the benchmark values obtained 
in \cite{ding06} (using  multiquadric differential quadratures) 
for a Reynolds number of 400: the solid lines in Figures~\ref{fig:ex03cutA}-\ref{fig:ex03cutB} 
show velocity profiles captured on the plane $y=0.5$, concentrating on the vertical and 
horizontal centrelines, where we also include the data from \cite{ding06} (in asterisks) 
showing a reasonable match (we have rotated the data, as in their tests the 
unit velocity is imposed on the face $y=1$).

\begin{figure}[h!]
\begin{center}
\subfigure[]{\includegraphics[width=0.325\textwidth]{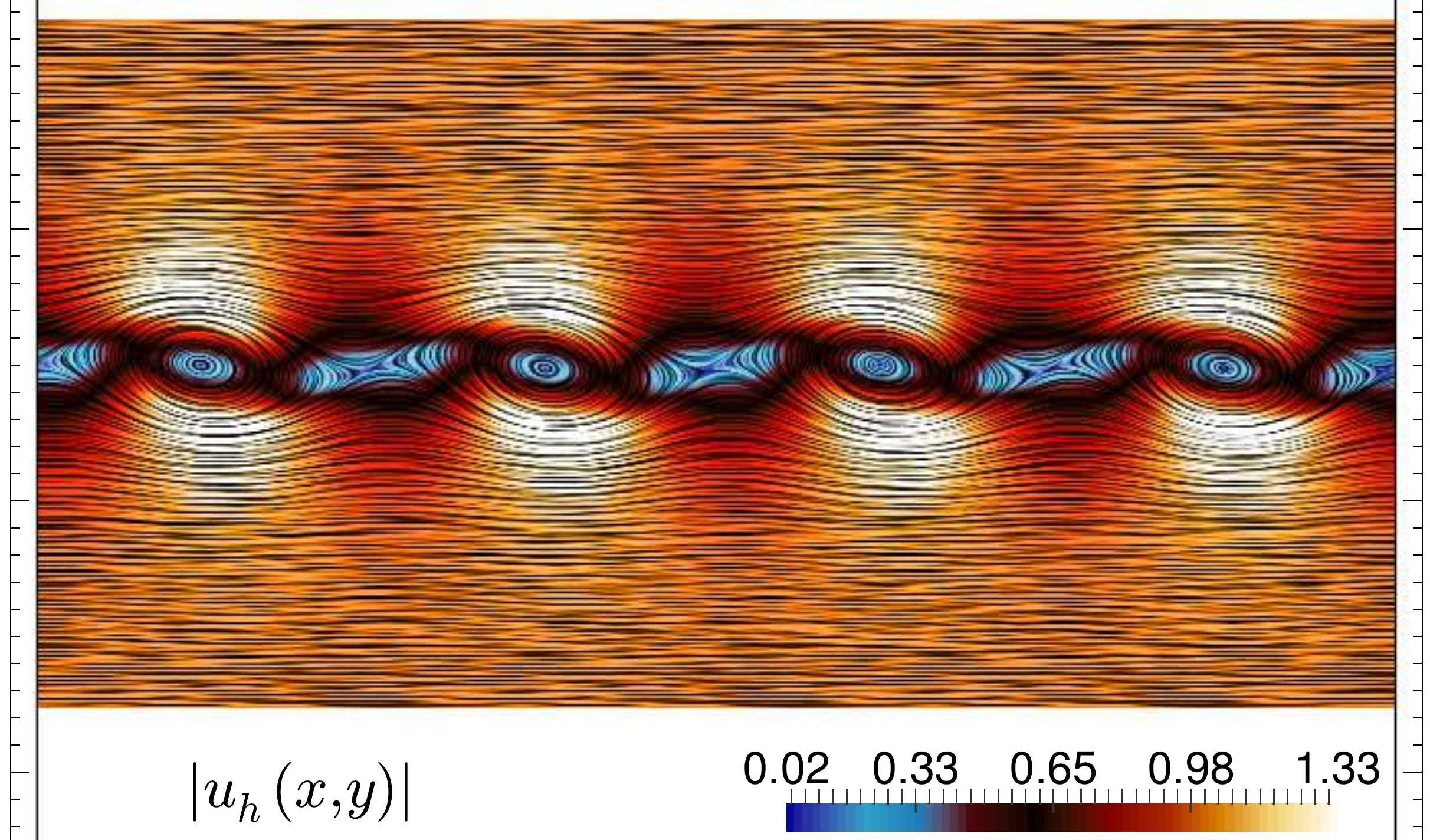}}
\subfigure[]{\includegraphics[width=0.325\textwidth]{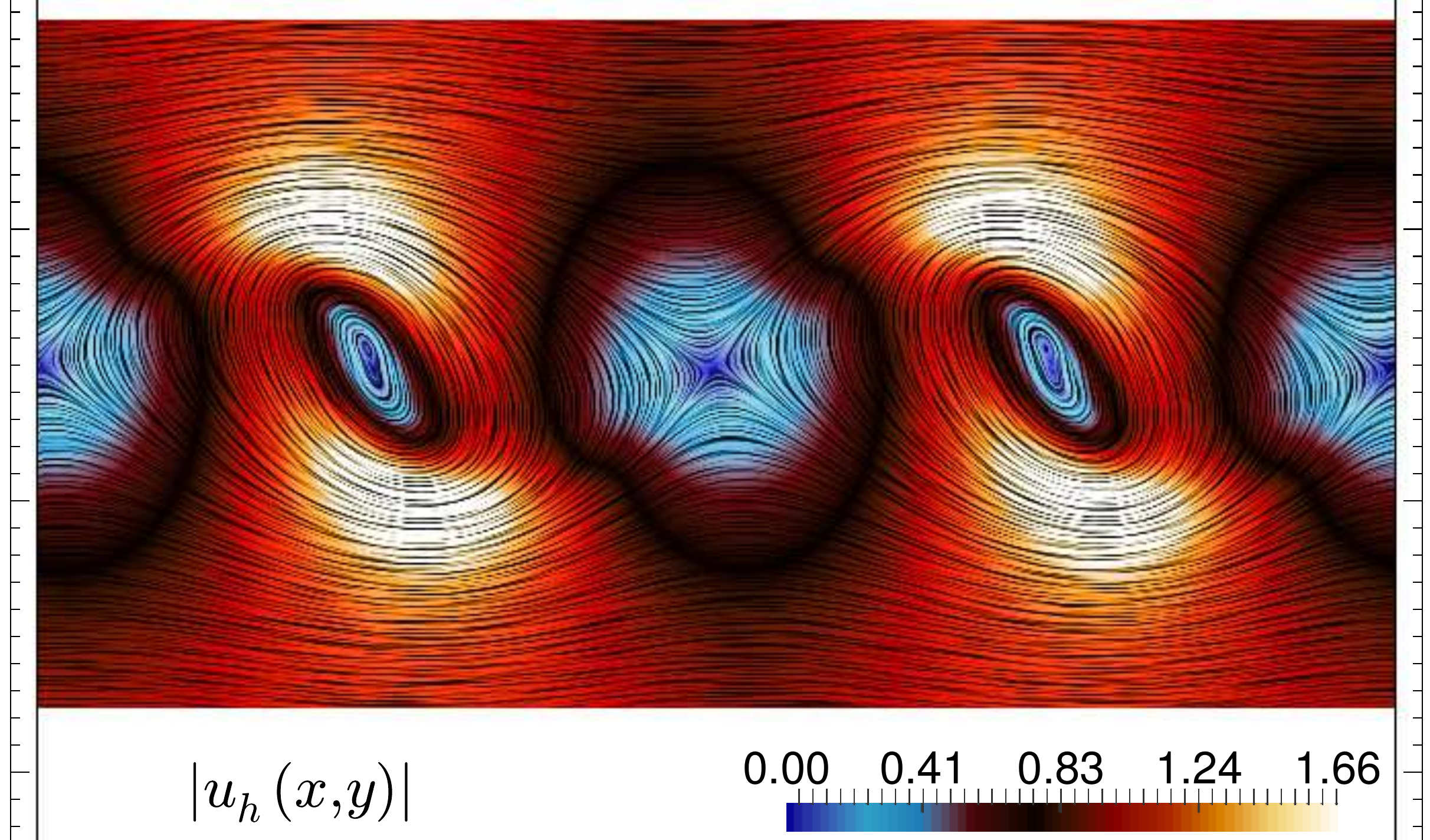}}
\subfigure[]{\includegraphics[width=0.325\textwidth]{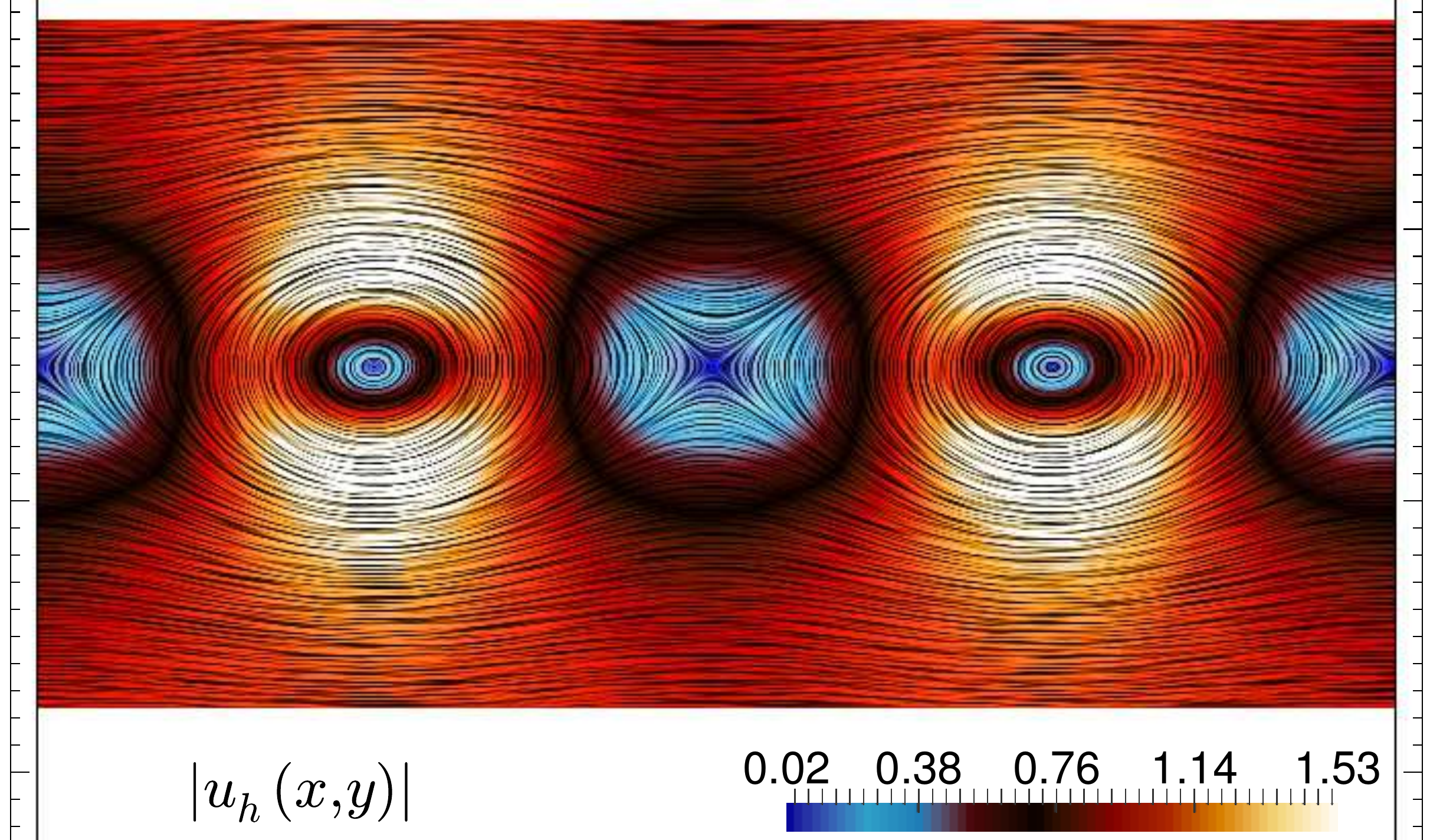}}\\
\subfigure[]{\includegraphics[width=0.325\textwidth]{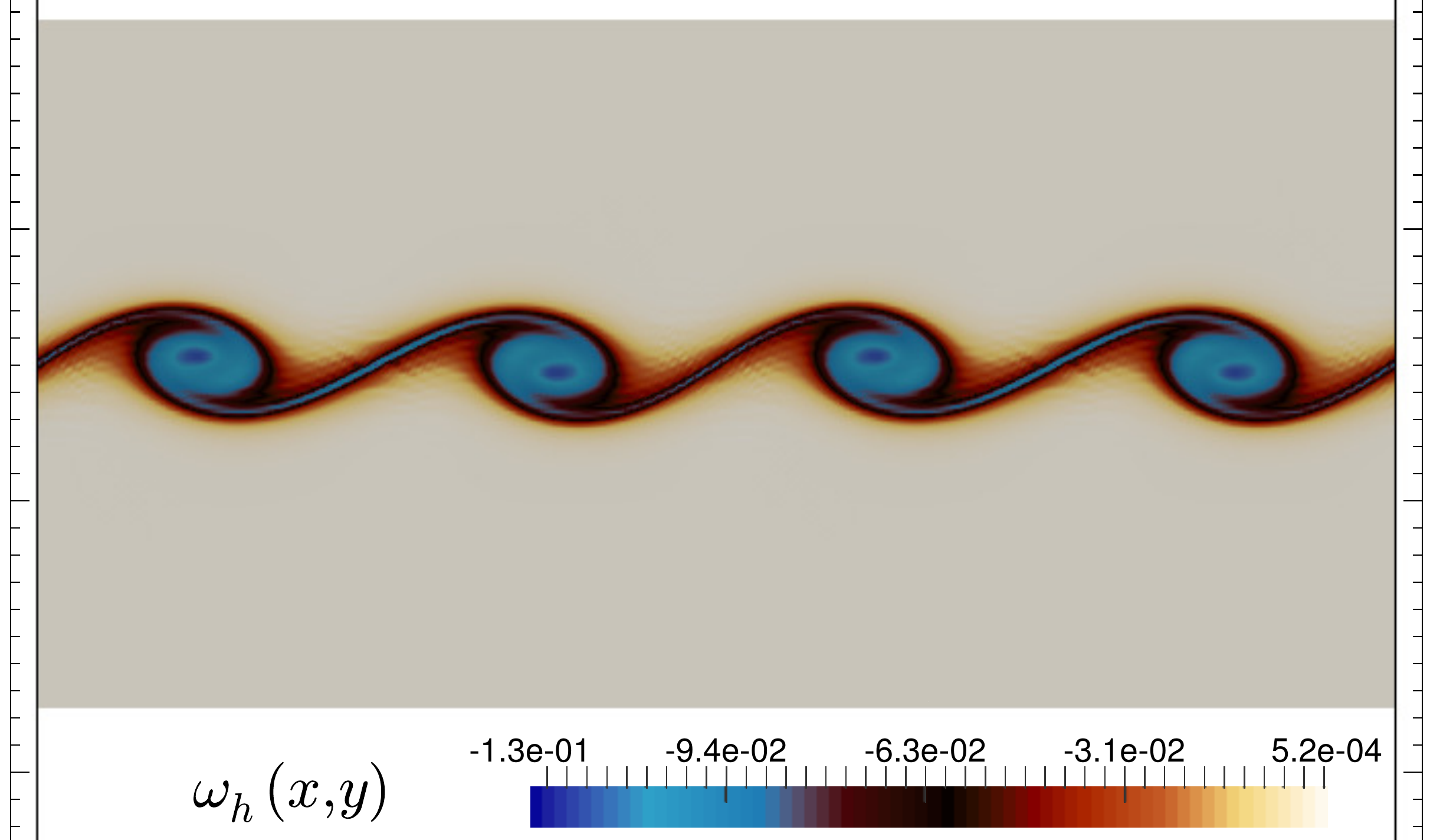}}
\subfigure[]{\includegraphics[width=0.325\textwidth]{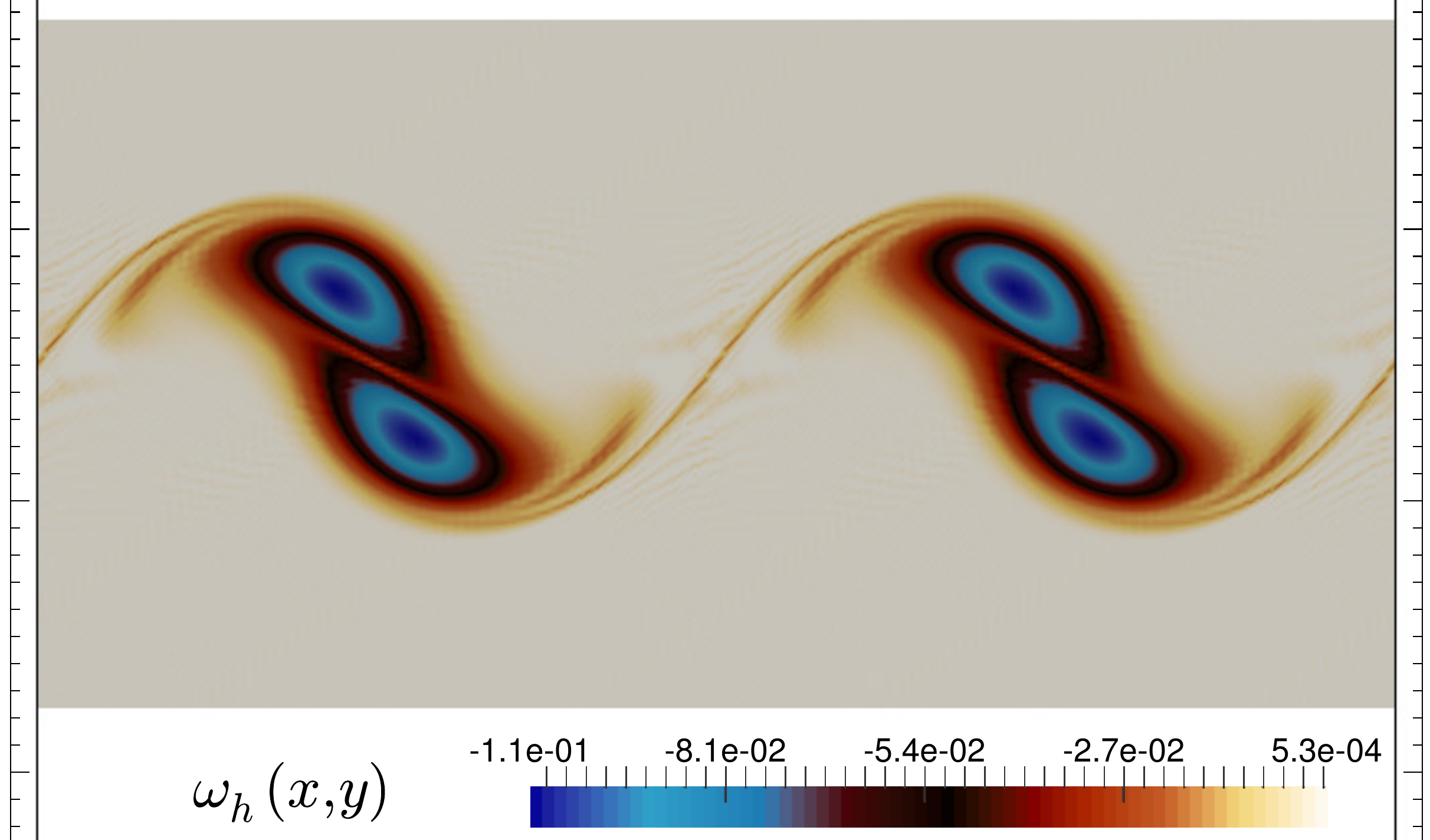}}
\subfigure[]{\includegraphics[width=0.325\textwidth]{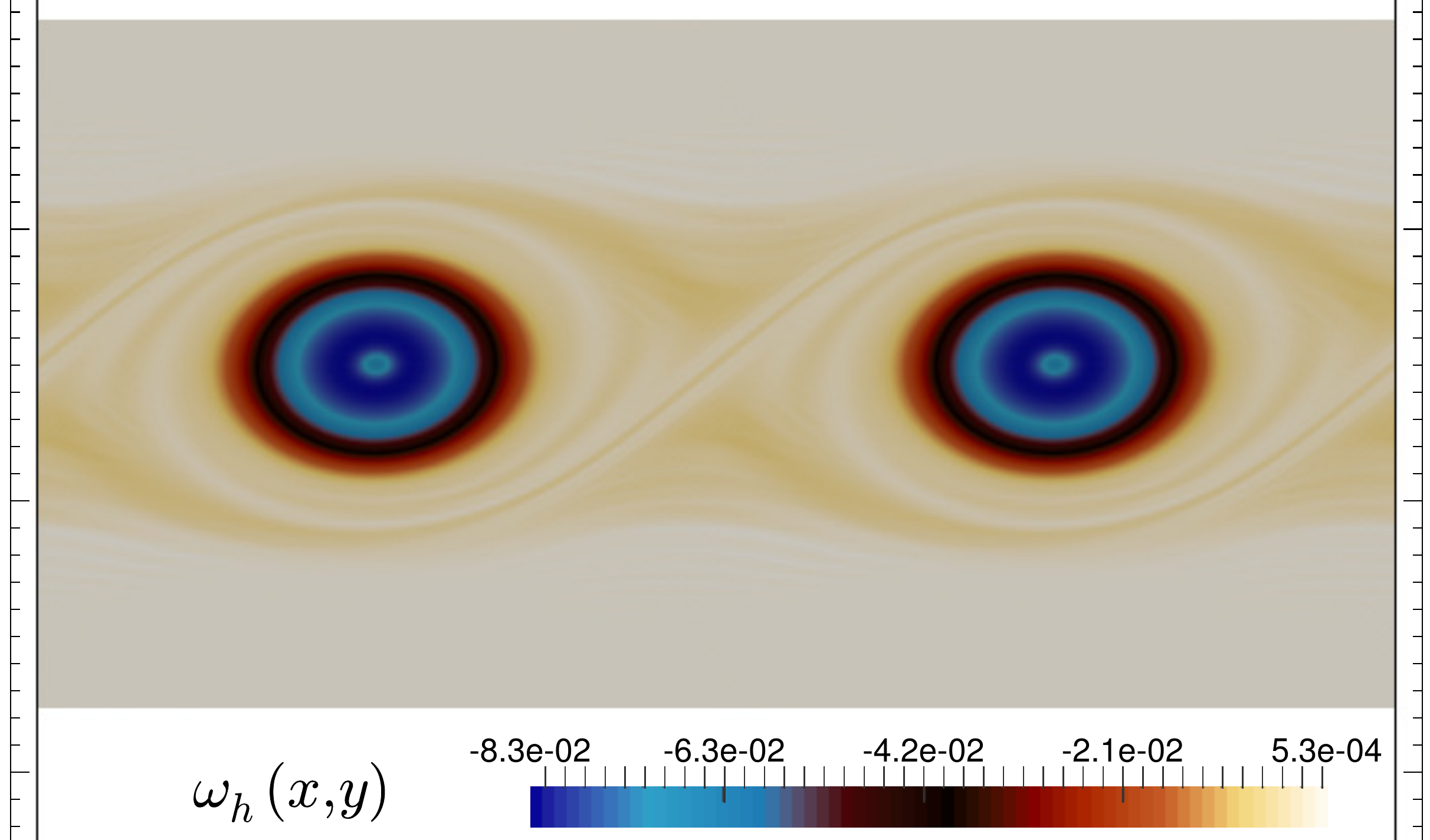}}\\
\subfigure[]{\includegraphics[width=0.325\textwidth]{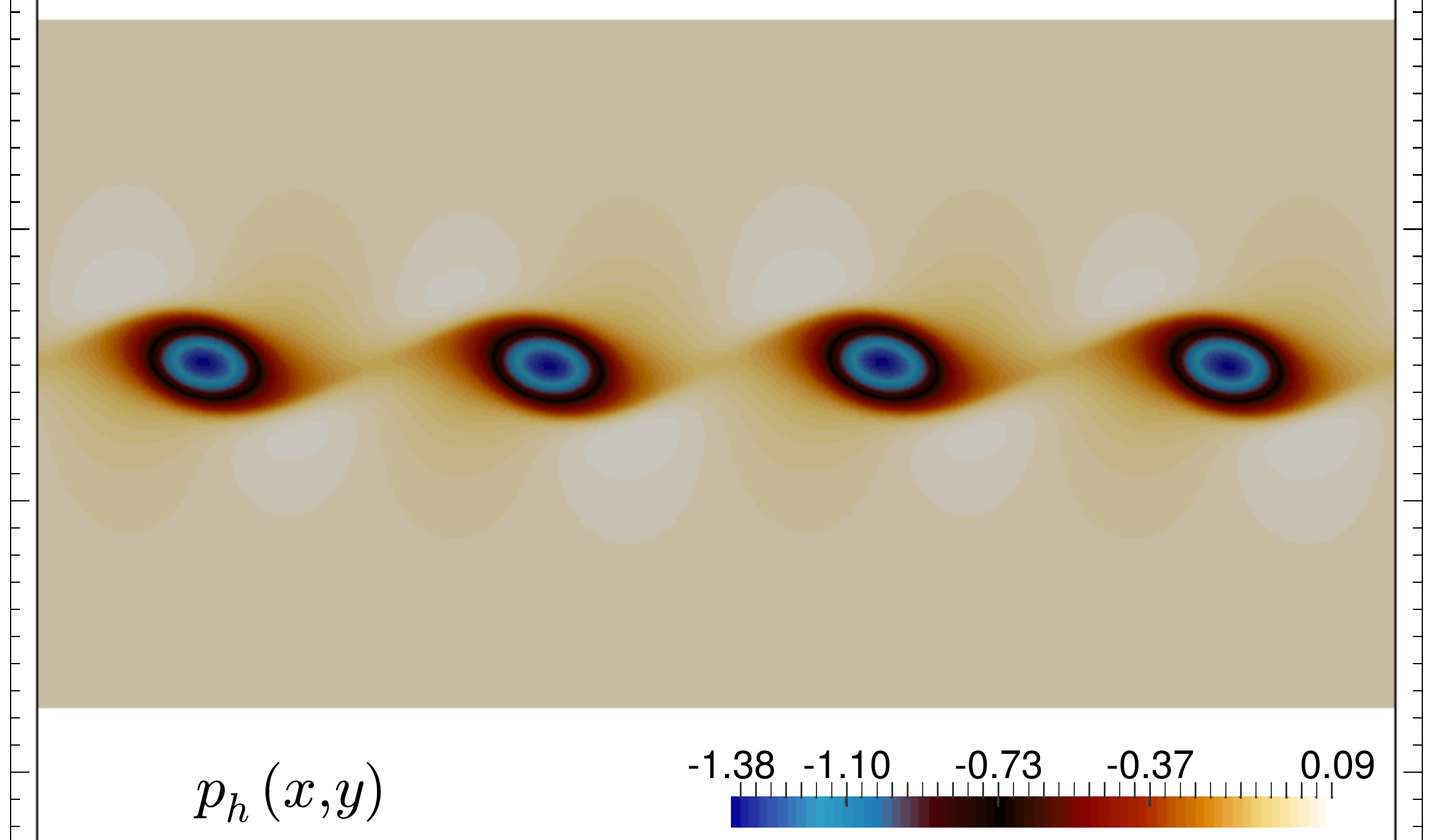}}
\subfigure[]{\includegraphics[width=0.325\textwidth]{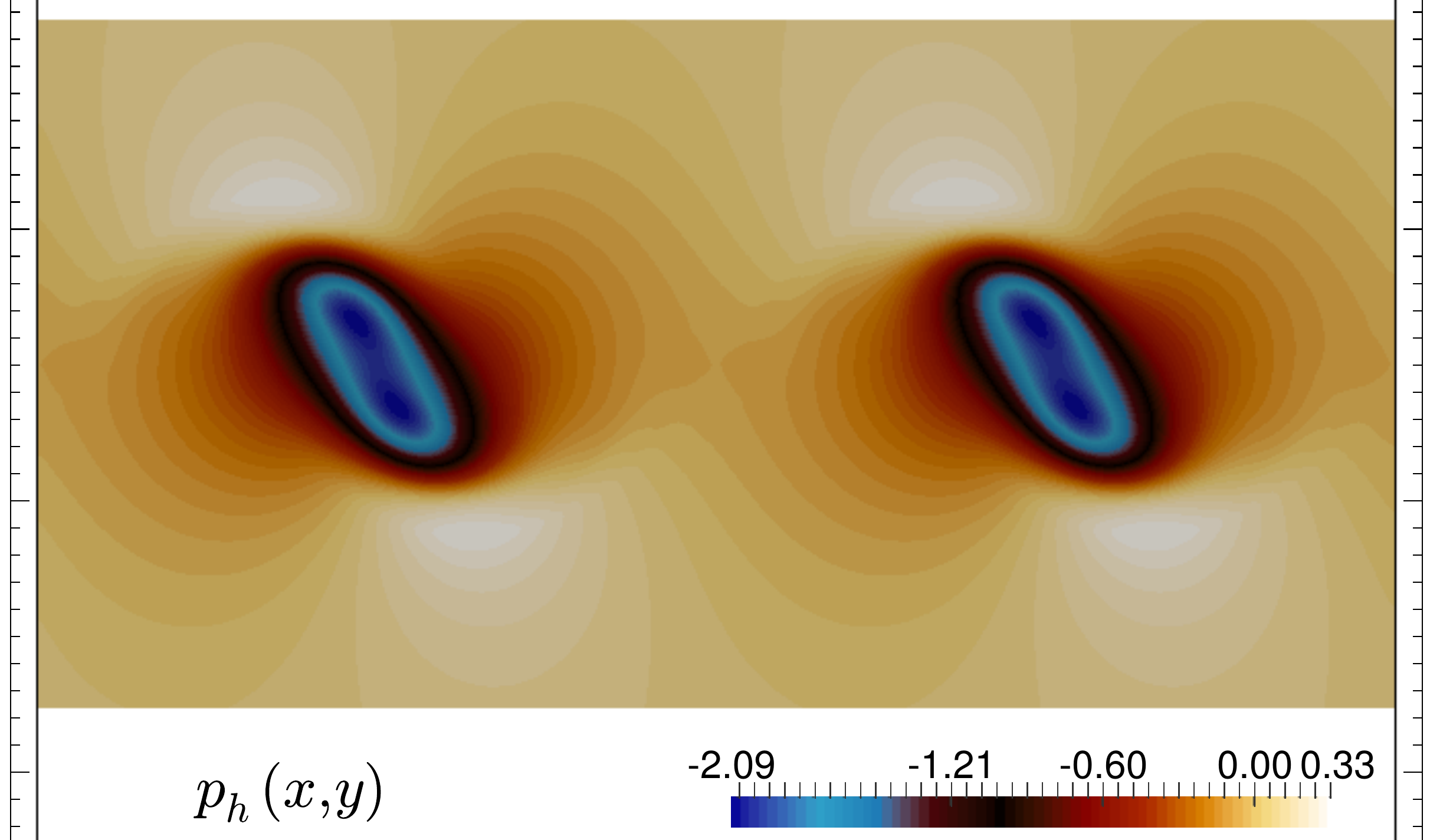}}
\subfigure[]{\includegraphics[width=0.325\textwidth]{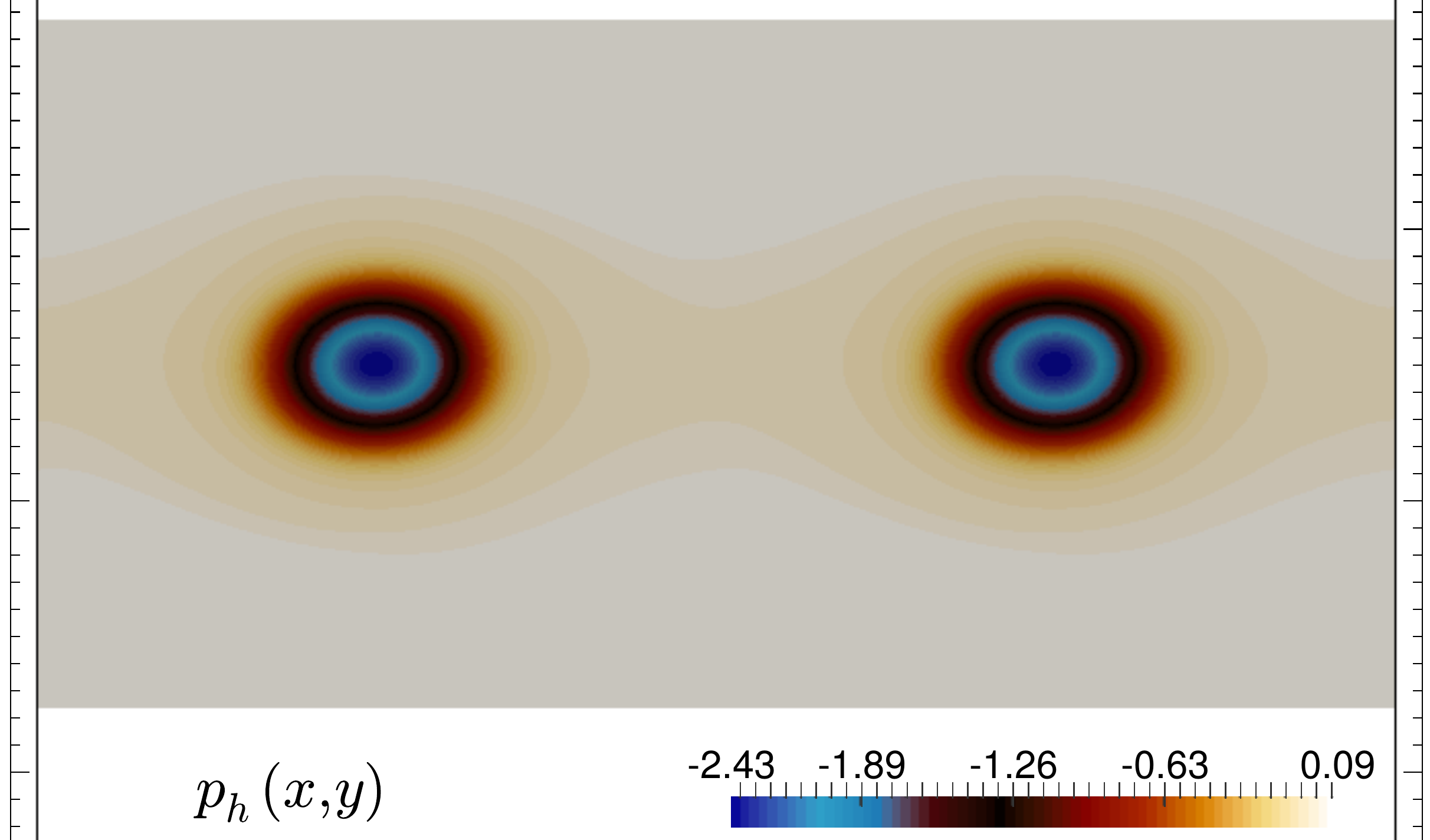}}
\end{center}

\vspace{-0.5cm}
\caption{Test~4. Transients of the Kelvin-Helmholtz instability problem computed 
with the first-order DG scheme \eqref{Mixed DG scheme}. Velocity magnitude (a-c), 
scalar vorticity (d-f), and Bernoulli pressure (g-i). }\label{fig:ex04}
\end{figure}

\paragraph{Test 4: Kelvin-Helmholtz mixing layer.} We close this section with a 
benchmark test related to the well-known vortex formation mechanisms known as the 
Kelvin-Helmholtz instability problem. The setup of the test follows the specifications 
in \cite{schroeder18} (see also \cite{burman17}), the transient Oseen equations are 
solved on the unit square $\Omega=(0,1)^2$ and the bottom and top walls constitute 
$\Gamma$, where we impose a 
free-slip velocity condition and zero vorticity. The left and right walls are regarded as a periodic boundary. 
The initial velocity is 
$$\bu = \begin{pmatrix}
u_{\infty}\tanh((2y-1)/\delta_0) - c_n u_{\infty}[\cos(w^a_{\infty}x)+\cos(w_{\infty}^bx)]\frac{(2y-1)}{\delta_0^2} \exp(-\frac{(y-1/2)^2}{\delta_0^2})\\
c_n u_{\infty} \exp(-\frac{(y-1/2)^2}{\delta_0^2})[w_{\infty}^a\sin(w_{\infty}^a x)+w_{\infty}^b\sin(w_{\infty}^bx)]\end{pmatrix},$$
with perturbation scaling $c_n = 0.001$, reference velocity $u_\infty = 1$, $w_{\infty}^a = 8\pi$, $w_{\infty}^b = 20\pi$, $\delta_0 = 1/28$. The 
characteristic time is $\bar{t}=\delta_0/u_{\infty}$, the Reynolds number is Re$=10000$, and the kinematic viscosity is $\nu = \delta_0u_{\infty}/\text{Re}$. 
We use a structured mesh of 128 segments per side, representing 131072 triangular elements, and we solve the problem 
using our first-order DG scheme, setting again the stabilisation constants to $a_{11}=c_{11}=\sigma= 1/\Delta t$ and $d_{11}=\nu$, 
where the timestep is taken as $\Delta t = \bar{t}/20$. The specification of this problem implies 
that the solutions will be quite sensitive to the initial 
perturbations present in the velocity, which will amplify and consequently vortices will appear. We proceed 
to compute numerical solutions until the dimensionless time $t = 7$, and present in Figure~\ref{fig:ex04} sample 
solutions at three different simulation times. For visualisation purposes we zoom into the region $0.25 \leq y \leq 0.75$, where 
all flow patterns are concentrated. 
In addition, a qualitative comparison against benchmark 
data from \cite{schroeder18} is presented in terms of the temporal evolution of the enstrophy $E(t)$ (here we rescale $\bomega_h$ 
with $\sqrt{\nu}$ to match again the real vorticity). We also record the evolution of the palinstrophy $P(t)$, 
a quantity that encodes the dissipation process.  These quantities are defined, respectively, as 
$$E(t):=\frac{1}{2\nu}  \| \bomega_h(t) \|^2_{0,\Omega},\qquad 
P(t):=\frac{1}{2\nu}  \| \nabla\bomega_h(t) \|^2_{0,\Omega},$$
and we remark that for the palinstrophy we use the discrete gradient associated with the DG discretisation.  
We show these quantities in Figure~\ref{fig:ex04comp}, where also include 
results from \cite{schroeder18} that correspond to coarse and fine mesh solutions of the Navier-Stokes equations 
using a high order scheme based on Brezzi-Douglas-Marini elements.

\begin{figure}[h!]
\begin{center}
\subfigure[]{\includegraphics[width=0.4\textwidth]{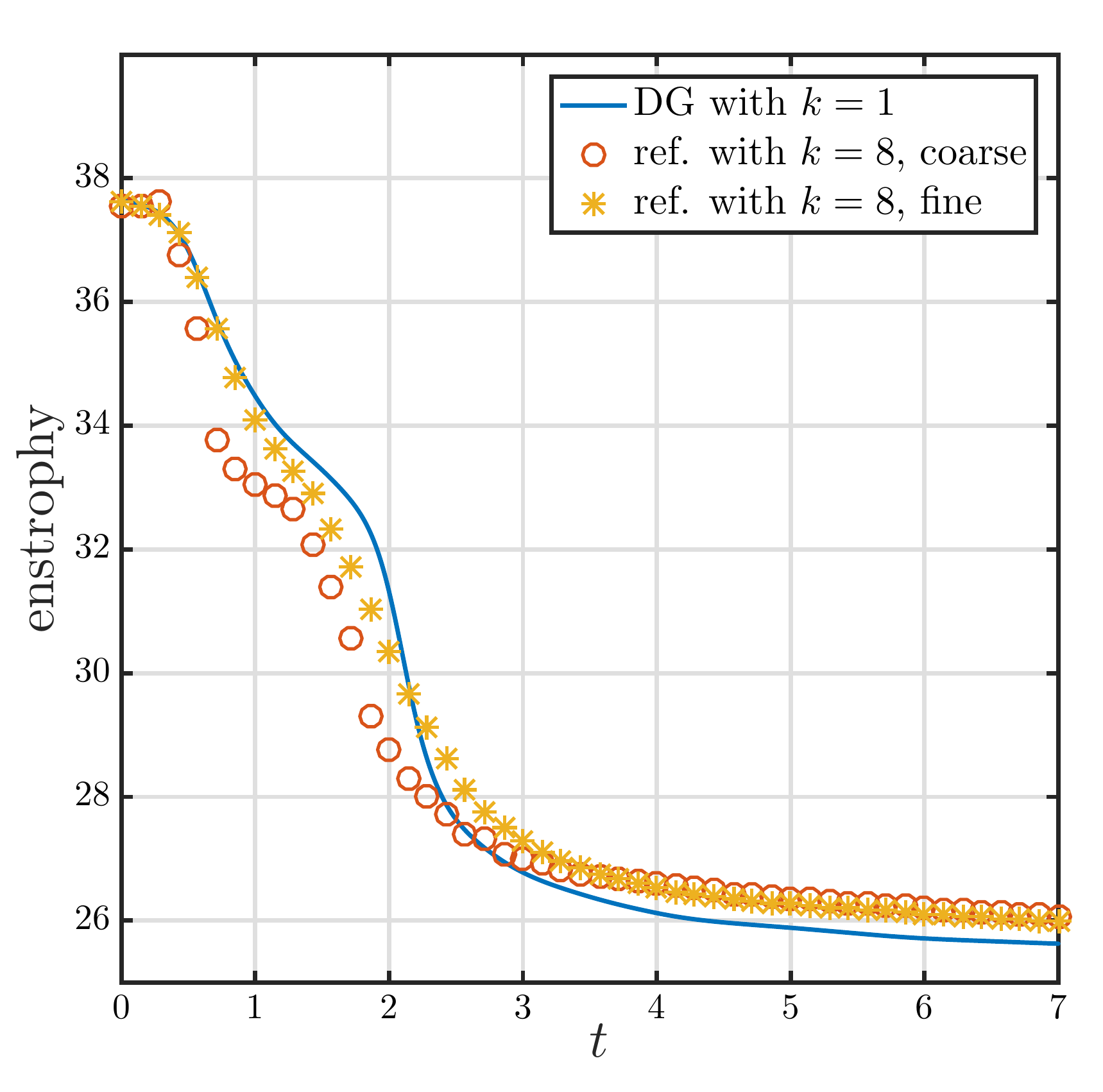}}
\subfigure[]{\includegraphics[width=0.4\textwidth]{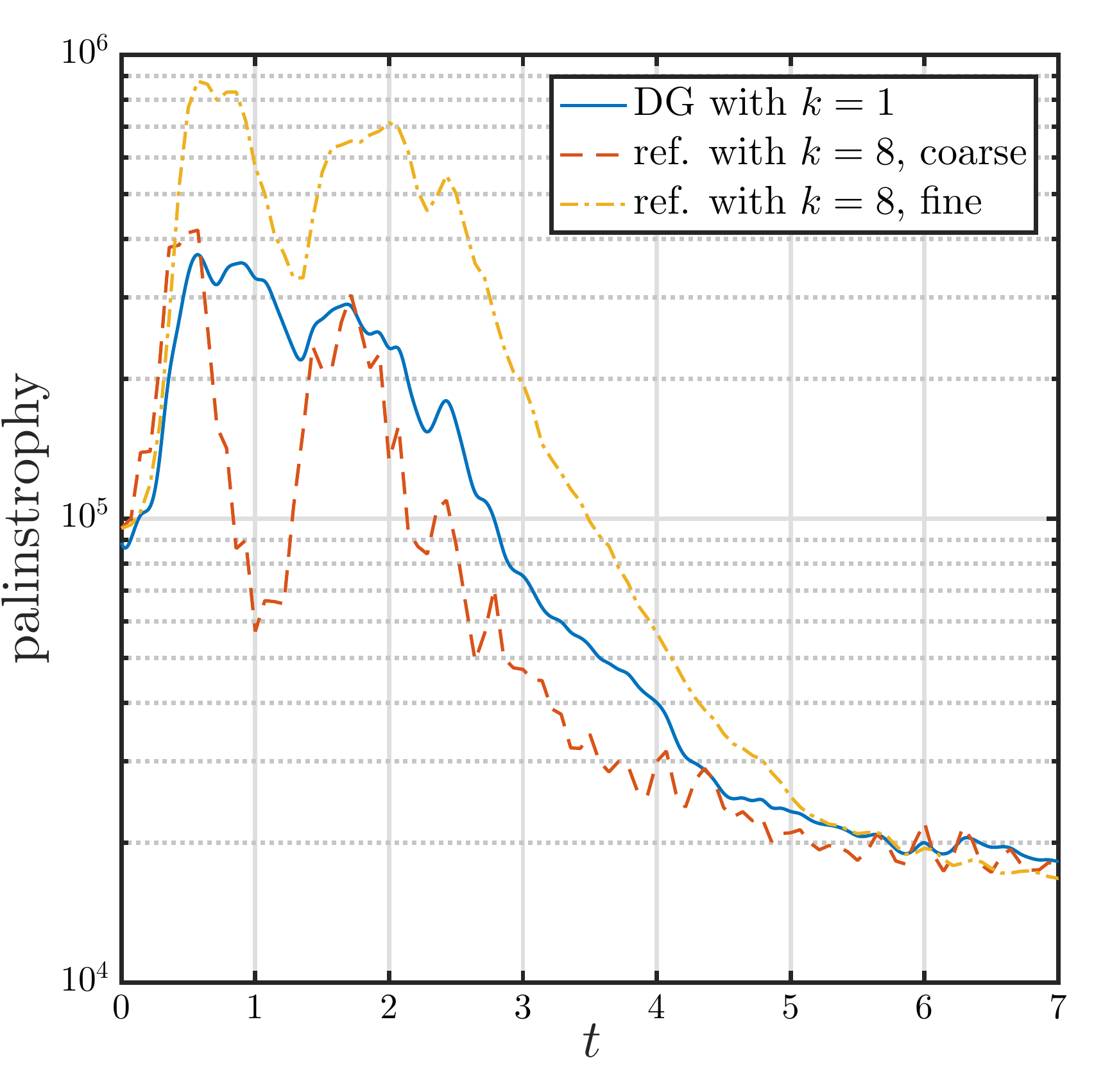}}\\[-1ex]

\caption{Test~4. Time evolution of enstrophy (a) and palinstrophy (b) in the Kelvin-Helmholtz 
mixing layer formation. Reference values correspond to computations from \cite{schroeder18} 
using BDM elements of order 8 on a coarse and on a fine mesh. }\label{fig:ex04comp}
\end{center}
\end{figure}

\small\paragraph{Acknowledgements.} This work has been supported by 
CNRS though the PEPS programme; by 
CONICYT - Chile through FONDECYT project 11160706;
by DIUBB, Universidad del B\'io-B\'io through projects 165608--3/R and 171508 GI/VC;
by DIDULS, Universidad de La Serena through  project PR17151;
and by the EPSRC through the research grant EP/R00207X/1.


\begin{thebibliography}{99}
\small 
 \bibitem{Alo-Valli}{\sc A. Alonso and A. Valli}, {\it An optimal
    domain decomposition preconditioner for low-frequency time
    harmonic Maxwell equations}.  Math. Comp., 68 (1999) 607--631.

\bibitem{amara07} {\sc M. Amara, D. Capatina-Papaghiuc, and
  D. Trujillo}, {\it Stabilized finite element method for
  Navier-Stokes equations with physical boundary conditions}.
  Math. Comp., 76(259) (2007) 1195--1217.
    
\bibitem{amoura07} {\sc K. Amoura,  M. Aza\"iez, C. Bernardi, N. Chorfi, and S. Saadi}, 
{\it Spectral element discretization of the vorticity, velocity and pressure
 formulation of the Navier-Stokes problem}.
Calcolo, 44(3) (2007) 165--188. 


\bibitem{AGMRb15} {\sc V. Anaya, G.N. Gatica, D. Mora, and R. Ruiz-Baier},
{\it An augmented velocity-vorticity-pressure formulation for the Brinkman problem}.
  Int. J. Numer. Methods Fluids, 79(3) (2015) 109--137.

\bibitem{anaya15a} {\sc V. Anaya, D. Mora, R. Oyarz\'ua, and
  R. Ruiz-Baier}, {\it A priori and a posteriori error
  analysis for a mixed scheme for the Brinkman problem}.
  Numer. Math., 133(4) (2016) 781--817.

\bibitem{anaya13} {\sc V. Anaya, D. Mora, and R. Ruiz-Baier}, {\it
    An augmented mixed finite element method for the
    vorticity-velocity-pressure formulation of the Stokes equations}.
    Comput. Methods Appl. Mech. Engrg., 267 (2013) 261--274.

\bibitem{anaya17b} {\sc V. Anaya, D. Mora, and R. Ruiz-Baier},
{\it Pure vorticity formulation and Galerkin discretization for the 
Brinkman equations}. IMA J. Numer. Anal., 37(4) (2017) 2020--2041. 
 
\bibitem{azaiez06} {\sc  M. Aza\"iez, C. Bernardi, and N. Chorfi}, 
{\it Spectral discretization of the vorticity, velocity and pressure
formulation of the Navier-Stokes equations}.
Numer. Math., 104(1) (2006) 1--26. 

\bibitem{bochev97} {\sc P.B. Bochev}, {\it Analysis of least-squares
  finite element methods for the Navier--Stokes equations}. SIAM
  J. Numer. Anal., 34(5) (1997) 1817--1844.
  
  \bibitem{bochev99} {\sc P.B. Bochev}, {\it Negative norm least-squares methods 
  for the velocity-vorticity-pressure Navier-Stokes equations}. 
Numer. Methods PDEs, 15(2) (1999) 237--256.

\bibitem{botella98} {\sc O. Botella and R. Peyret}, {\it Benchmark spectral results on the 
lid-driven cavity flow}. Comput. \& Fluids, 27(4) (1998) 421--433. 

\bibitem{burman17} {\sc E. Burman, A. Ern, and M.A. Fern\'andez}, {\it 
Fractional-step methods and finite elements with symmetric stabilization 
for the transient Oseen problem}.  ESAIM Math. Model. Numer. Anal.,
51(2) (2017) 487--507.

\bibitem{cai16} {\sc Z. Cai and B. Chen}, {\it Least-squares method for the Oseen equation}. 
Numer. Methods PDEs, 32 (2016) 1289--1303.

\bibitem{caucao16} {\sc S. Caucao, D. Mora, and R. Oyarz\'ua}, {\it A priori and a 
posteriori error analysis of a pseudostress-based mixed formulation of the Stokes 
problem with varying density}.
IMA J. Numer. Anal., 36(2) (2016) 947--983.

\bibitem{chang07}
{\sc C.L. Chang and S.-Y. Yang}, {\it Analysis of the $[L^2,L^2,L^2]$ 
  least-squares finite element method for incompressible Oseen-type problems}.
  Int. J. Numer. Anal. Model., 4(3-4) (2007) 402--424.
  
    \bibitem{CockburnJSC13}
   {\sc A. \c{C}e\c{s}melio\u{g}lu, B. Cockburn, N.C. Nguyen, and J. Peraire},
   {\it Analysis of HDG methods for Oseen equations}.
   J. Sci. Comput., 55(2) (2013) 392--431.
   
  \bibitem{Ciarlet}
  {\sc P. Ciarlet}, {\it The Finite Element Method for Elliptic Problems}.
  North-Holland, Amsterdam, The Netherlands, 1978. 
  
   \bibitem{Cockburn05}
   {\sc B. Cockburn, G. Kanschat, and D. Sch\"{o}tzau},
   {\it The local discontinuous Galerkin method for linear incompressible fluid flow: A review}.
   Comput. \& Fluids, 34(4--5) (2005) 491--506.
   
 \bibitem{Cockburn03}
 {\sc B. Cockburn, G. Kanschat, and D. Sch\"{o}tzau},
 {\it The local discontinuous Galerkin method for the Oseen equations}.
 Math. Comp., 73(246) (2003) 569--593.
 
 \bibitem{Cockburn_SIAM2002}
 {\sc B. Cockburn, G. Kanschat, D. Sch\"{o}tzau, and C. Schwab},
 {\it Local discontinuous Galerkin methods for the Stokes system}.
 SIAM J. Numer. Anal., 40(1) (2002) 319--343.
 
\bibitem{ding06} {\sc H. Ding, C. Shu, K.S. Yeo, and D. Xu},
{\it Numerical computation of three-dimensional incompressible viscous
flows in the primitive variable form by local multiquadric differential quadrature method}.
Comput. Methods Appl. Mech. Engrg., 195 (2006) 516--533.

\bibitem{duan03} {\sc H.-Y. Duan and G.-P. Liang}, {\it On 
the velocity-pressure-vorticity least-squares mixed finite element method for the 3D Stokes equations}. 
SIAM J. Numer. Anal., 41(6) (2003) 2114--2130.

\bibitem{dubois03} {\sc F. Dubois, M. Sala\"un, and S. Salmon}, {\it
  First vorticity-velocity-pressure numerical scheme for the Stokes
  problem}. Comput. Methods Appl. Mech. Engrg., 192(44--46) (2003)
  4877--4907.

\bibitem{G2014}
{\sc G.N. Gatica}, 
{\it A Simple Introduction to the Mixed Finite Element Method. Theory and Applications}.
Springer Briefs in Mathematics, Springer, Cham Heidelberg New York Dordrecht London, (2014).

\bibitem{gatica11} {\sc G.N. Gatica, L.F. Gatica, and A. M\'arquez}, {\it
  Augmented mixed finite element methods for a vorticity-based
  velocity--pressure--stress formulation of the Stokes problem in 2D}.
  Int. J. Numer. Methods Fluids, 67(4) (2011) 450--477.

\bibitem{gr-1986}
{\sc V. Girault and P.A. Raviart}, {\it Finite
  Element Methods for Navier-Stokes Equations. Theory and Algorithms}.
  Springer-Verlag, Berlin, 1986.
  
  \bibitem{mohapatra16} {\sc S. Mohapatra and S. Ganesan},
  {\it A non-conforming least squares spectral element 
  formulation for Oseen equations with applications to Navier-Stokes equations}. 
 Numer.  Funct. Anal. Optim., 37(10)  (2016) 1295--1311.

\bibitem{nicolaides82} {\sc R.A. Nicolaides}, {\it Existence, uniqueness and 
approximation for generalized saddle point problems}. SIAM J. Numer. Anal., 19 (1982) 349--357.

\bibitem{oseen} {\sc C.W. Oseen}, {\it \"Uber die Stokes'sche formel, und \"uber 
eine verwandte Aufgabe in der Hydrodynamik}, Arkiv Mat. Astr. Fys.,  vi(29) (1910). 
      
\bibitem{salaun15} {\sc M. Sala\"un and S. Salmon}, {\it Low-order
    finite element method for the well-posed bidimensional Stokes
    problem}.  IMA J. Numer. Anal., 35(1) (2015) 427--453. 
    
\bibitem{schroeder18} {\sc P.W. Schroeder, V. John, P.L. Lederer, C. Lehrenfeld, G. Lube, and 
J. Sch\"oberl}, {\it On reference solutions and the sensitivity of the 2d Kelvin--Helmholtz instability problem}, 
arXiv: 1803.06893 (2018). 
    
    
\bibitem{tsai05} {\sc C.-C. Tsai and S.-Y. Yang}, {\it On the velocity-vorticity-pressure least-squares 
finite element method for the stationary incompressible Oseen problem}. 
J. Comput. Appl. Math., 182(1) (2005) 211--232. 
    
\end{thebibliography}
\end{document}